\pgfplotsset{compat=1.15}
\newcommand{\res}{\!\!\mathop{\hbox{
                                \vrule height 7pt width .5pt depth 0pt
                                \vrule height .5pt width 6pt depth 0pt}}
                                \nolimits}
\def\z{{\bf z}}
\def\g{{\bf g}}
\newtheorem{theorem}{Theorem}[section]
\newtheorem{lemma}[theorem]{Lemma}
\newtheorem{definition}[theorem]{Definition}
\newtheorem{proposition}[theorem]{Proposition}
\newtheorem{remark}[theorem]{Remark}
\newtheorem{example}[theorem]{Example}
\newtheorem*{theorem*}{\it Theorem}
\def\vint_#1{\mathchoice%
          {\mathop{\kern 0.2em\vrule width 0.6em height 0.69678ex depth -0.58065ex
                  \kern -0.8em \intop}\nolimits_{\kern -0.4em#1}}%
          {\mathop{\kern 0.1em\vrule width 0.5em height 0.69678ex depth -0.60387ex
                  \kern -0.6em \intop}\nolimits_{#1}}%
          {\mathop{\kern 0.1em\vrule width 0.5em height 0.69678ex
              depth -0.60387ex
                  \kern -0.6em \intop}\nolimits_{#1}}%
          {\mathop{\kern 0.1em\vrule width 0.5em height 0.69678ex depth -0.60387ex
                  \kern -0.6em \intop}\nolimits_{#1}}}
\def\vintslides_#1{\mathchoice%
          {\mathop{\kern 0.1em\vrule width 0.5em height 0.697ex depth -0.581ex
                  \kern -0.6em \intop}\nolimits_{\kern -0.4em#1}}%
          {\mathop{\kern 0.1em\vrule width 0.3em height 0.697ex depth -0.604ex
                  \kern -0.4em \intop}\nolimits_{#1}}%
          {\mathop{\kern 0.1em\vrule width 0.3em height 0.697ex depth -0.604ex
                  \kern -0.4em \intop}\nolimits_{#1}}%
          {\mathop{\kern 0.1em\vrule width 0.3em height 0.697ex depth -0.604ex
                  \kern -0.4em \intop}\nolimits_{#1}}}
\def\R{\mathbb R}
\def\g{\hbox{\bf g}}
\numberwithin{equation}{section}
\def\1{\raisebox{2pt}{\rm{$\chi$}}}
\def\g{{\bf g}}
\def\z{{\bf z}}
\definecolor{violet(ryb)}{rgb}{0.53, 0.0, 0.69}
\begin{document}

\title[Evolution problems with perturbed $1$-Laplacian type operators]
{\bf Evolution problems with perturbed $1$-Laplacian type operators  on random walk spaces}

\author{ W. G\'{o}rny, J. M. Maz\'{o}n and J. Toledo}

\address{Wojciech G\'{o}rny
\hfill\break\indent Faculty of Mathematics, Universit\"at Wien
\hfill\break\indent Oskar-Morgerstern-Platz 1, 1090 Vienna, Austria
\hfill\break\indent Faculty of Mathematics, Informatics and Mechanics, University of Warsaw
\hfill\break\indent Banacha 2, 02-097 Warsaw, Poland
\hfill\break\indent
{\tt  wojciech.gorny@univie.ac.at }
}

\address{ Jos\'{e} M. Maz\'{o}n
\hfill\break\indent Departamento de An\'{a}lisis Matem\'{a}tico,
Universitat de Val\`encia \hfill\break\indent Valencia, Spain.}
\email{{\tt  mazon@uv.es}}

\address{ Juli\'{a}n Toledo
\hfill\break\indent Departamento de An\'{a}lisis Matem\'{a}tico,
Universitat de Val\`encia \hfill\break\indent Valencia, Spain.}
\email{{\tt toledojj@uv.es}}

\date{}

\keywords{}

\begin{abstract}
Random walk spaces are a general framework for the study of PDEs.
They include as particular cases locally finite weighted connected graphs and nonlocal
settings involving symmetric integrable kernels on $\R^N$. We are interested in the study of evolution
problems involving two   random walk structures so that the associated
functionals have different growth on each structure. We also deal with the case of a functional with different growth on a partition of the random walk.

\end{abstract}

\maketitle

\section{Introduction}

The main focus of this paper is to describe several phenomena in nonlocal partial differential equations with inhomogeneous growth. We work in a general setting of random walk spaces (for a precise definition, we refer to Section \ref{preliminaries}), which includes as two particular cases problems on locally finite weighted graphs (e.g. lattices) and nonlocal problems given by nonnegative non-singular symmetric kernels on $\mathbb{R}^N$. We study PDEs which appear when two random walk structures overlap and the associated functional has different growth on the two structures; we also study the case of a partition of a random walk, where again we have different growth of the associated functional on the two pieces. In other words, we propose a framework to study evolution problems with inhomogeneous growth on random walk spaces.

Our first objective is to study the gradient flow of the functional defined below. Let $[X,\mathcal{B},m^i,\nu_i]$, $i=1,2$, be two random walk spaces. For $p>1$, consider the energy functional with $(1,p)$-growth given by
\begin{equation}\label{0945}\begin{array}{l}
\displaystyle  \frac12\int_{X}\left(\int_{X}|u(y)-u(x)| \, dm^1_x(y) \right) d\nu_1(x)
 + \frac{1}{2p}\int_{X}\left(\int_{X} |u(y)-u(x)|^p \, dm^2_x(y) \right)d\nu_2(x),
\end{array}
\end{equation}
with the value equal to $+\infty$ if either integral is infinite. Assuming that $\nu_2 \ll \nu_1$ and $\mu:=\frac{d\nu_2}{d\nu_1}\in L^\infty(X,\nu_1)$, our   main aim is to study the gradient flow in $L^2(X,\nu_1)$  associated to such functional, which will be a nonlocal evolution problem given by
$$u_t=\Delta^{m^1}_1u+\mu \Delta_p^{m^2}u.$$
This is the content of Section~\ref{section3}; we refer to the introduction of that section for more details. We also consider the functional corresponding to $(q,p)$-growth with $p,q > 1$,~i.e.,
\begin{equation}\label{0945a}
\begin{array}{l}
\displaystyle  \frac{1}{2q} \int_{X}\left(\int_{X}|u(y)-u(x)|^q \, dm^1_x(y) \right) d\nu_1(x)
 + \frac{1}{2p}\int_{X}\left(\int_{X} |u(y)-u(x)|^p \, dm^2_x(y) \right)d\nu_2(x)
\end{array}
\end{equation}
and its gradient flow in $L^2(X,\nu_1)$ given by
$$u_t=\Delta^{m^1}_q u+\mu \Delta_p^{m^2}u.$$
The main effort in the paper concerns the $(1,p)$-growth case, where some additional difficulties appear and the proofs are presented in full; the technique for the $(q,p)$-growth case is very similar and we only show how to adapt the proofs from the $(1,p)$-growth case. Furthermore, we also study the case $p = 1$, for which the aforementioned approach fails and in the proof we rely on a different technique based on homogeneity of the functional. Let us note that the key assumption in all of the above problems is that $\nu_2 \ll \nu_1$ with a bounded Radon-Nikodym derivative, which allows us to pose the evolution problem in~a~joint Hilbert space $L^2(X,\nu_1)$.

The second type of problems we consider is the following. Let $[X,\mathcal{B},m,\nu]$ be a random walk space. Consider nonempty measurable subsets $A_x,B_x\subset  \hbox{supp}(m_x)$ such that
$$\hbox{supp}(m_x)=A_x\cup B_x.$$
In general, the intersection $A_x\cap B_x$ is not assumed to be empty. For $p > 1$, take the energy functional (defined on $L^2(X,\nu)$)
\begin{equation}\label{eldelaintro01}
\mathcal{F}_m(u)=\int_{X}\left(\frac12\int_{A_x}|u(y)-u(x)| \, dm_x(y) + \frac{1}{2p}\int_{B_x}|u(y)-u(x)|^p \, dm_x(y)\right) d\nu(x),
\end{equation}
where we understand that $\mathcal{F}_m(u)=+\infty$ if the integral is not finite. The second of our main objectives is to study the gradient flow associated to the energy functional~\eqref{eldelaintro01} (this is done in Section~\ref{section4}). Here, the situation is a bit simpler since we work in a natural Hilbert space $L^2(X,\nu)$, but another difficulty is posed by the fact that the restrictions of $m_x$ to the sets $A_x$, $B_x$ are not necessarily random walks themselves. To illustrate the type of problems we are going to study, we postpone the discussion on the relation of our results to the literature, and first look at some examples of random walk spaces with two overlapping structures or such partitions of the random walk.

\begin{example}\label{z2ejm}\rm
Consider   the lattice  $X=\mathbb{Z}^2$ with  the random walk given by short-range interactions, with non-zero weights only with the four neighbours of a given point, and possibly different weights in the horizontal and vertical directions. More precisely, for any $n,m\in \mathbb{Z}$, we have a weight $a>0$ between  the vertices $(n,m)$ and $(n\pm 1,m)$, and a weight $b>0$ between  the vertices $(n,m)$ and $(n,m\pm 1)$; the weight is equal to $0$ in any other case. Then,  the random walk $m$ (for the definition see Section \ref{preliminaries})  is given by
$$m_{(n,m)}=\frac{a}{2a+2b}\delta_{(n-1,m)}+\frac{a}{2a+2b}\delta_{(n+1,m})+\frac{b}{2a+2b}\delta_{(n,m-1)}+\frac{b}{2a+2b}\delta_{(n,m+1)},$$
and   the corresponding invariant measure $\nu$ on $\mathbb{Z}^2$ is given by
$$\nu{(n,m)} = 2a+2b.$$
Let us present several possible choices of inhomogeneous problems in this setting.

{\flushleft (i)}  A case with interaction in all directions. In the setting of the functional \eqref{eldelaintro01}, we choose the sets
$$A_{(n,m)}=B_{(n,m)}=\{(n-1,m), (n+1,m), (n,m-1),(n,m+1)\},$$
  so the functional to study becomes
$$\begin{array}{l}
\displaystyle\mathcal{F}_m(u)=  \sum_{(m,n)\in \mathbb{Z}^2}\bigg(\frac12\Big(a|u(n-1,m)-u(n,m)| + a|u(n+1,m)-u(n,m)|
\\[16pt]
\displaystyle \qquad\qquad\qquad\qquad\qquad + b|u(n,m-1)-u(n,m)|  + b|u(n,m+1)-u(n,m)|\Big)
\\[16pt]
\displaystyle \qquad\qquad\qquad\qquad + \frac{1}{2p}\Big(a|u(n-1,m)-u(n,m)|^p+ a|u(n+1,m)-u(n,m)|^p
\\[16pt]
\displaystyle \qquad\qquad\qquad\qquad\qquad +  b|u(n,m-1)-u(n,m)|^p + b|u(n,m+1)-u(n,m)|^p\Big)\bigg).
\end{array}$$
  In this case, the operator in the gradient flow associated to such energy looks like a $1$-Laplacian plus a $p$-Laplacian on the whole lattice. Note that this example also describes the functional \eqref{0945} with the two random walk structures being equal.

{\flushleft (ii)} A case of empty intersection.   In the setting of the functional \eqref{eldelaintro01}, we choose
$$A_{(n,m)}=\{(n-1,m),(n+1,m)\},\ B_{(n,m)}=\{(n,m-1),(n,m+1)\},$$
and the functional to study is
$$\begin{array}{l}
\displaystyle \mathcal{F}_m(u)= \sum_{(m,n)\in \mathbb{Z}^2}\bigg(\frac12\Big(a|u(n-1,m)-u(n,m)| +a|u(n+1,m)-u(n,m)|\Big)
\\[16pt]
  \displaystyle \qquad\qquad\qquad\qquad + \frac{1}{2p}\Big(b|u(n,m-1)-u(n,m)|^p + b|u(n,m+1)-u(n,m)|^p\Big)\bigg).
\end{array}$$
In this case,   the operator in the gradient flow associated to such energy looks like a $1$-Laplacian in the horizontal direction plus a $p$-Laplacian in the vertical direction. Note that if $a = b$, this example may be also described in terms of the functional \eqref{0945}, if we modify the random walk $m$ on the space and consider a random walk $m^1$ given by
$$m^1_{(n,m)}=\frac{1}{2} \delta_{(n-1,m)} + \frac{1}{2} \delta_{(n+1,m)}$$
and a random walk $m^2$ given by
$$m^2_{(n,m)}=\frac{1}{2} \delta_{(n,m-1)} + \frac{1}{2} \delta_{(n,m+1)}.$$

{\flushleft (iii)} A case of nonempty intersection.   In the setting of the functional \eqref{eldelaintro01}, we choose
$$A_{(n,m)}=\{(n-1,m), (n+1,m), (n,m-1),(n,m+1)\},\ B_{(n,m)}=\{(n,m-1), (n,m+1)\},$$
so the functional to study now is
$$\begin{array}{l}
\displaystyle\mathcal{F}_m(u)=  \sum_{(m,n)\in \mathbb{Z}^2}\bigg(
   \displaystyle \frac{1}{2}\Big(a|u(n-1,m)-u(n,m)| + a|u(n+1,m)-u(n,m)|
   \\[16pt]\displaystyle \qquad\qquad\qquad\qquad\qquad +  a|u(n,m-1)-u(n,m)| + a|u(n,m+1)-u(n,m)|\Big)
\\[16pt]\displaystyle \qquad\qquad\qquad\qquad +
\frac1{2p}\Big(b|u(n,m-1)-u(n,m)|^p + b|u(n,m)-u(n,m+1)|^p\Big)
   \bigg).
\end{array}$$
In this case,  that the operator in the gradient flow associated to such energy looks like a $1$-Laplacian on the whole lattice plus a $p$-Laplacian in the vertical direction.
\hfill $\blacksquare$
\end{example}

The inhomogeneous-growth variational problems with $(q,p)$-type growth and corresponding PDEs, for which the studied functional can be estimated from above and below by two power-type functions with different exponents, have been rigorously studied since the works of Marcellini \cite{Mar1,Mar2}. Evolution problems of this type have also been studied, starting with the works of B\"ogelein, Duzaar and Marcellini \cite{BDM1,BDM2}. The literature on inhomogeneous-growth problems is now vast (for an overview see \cite{Chl} or \cite{MR}), but the usual underlying assumption is that the growth is superlinear.

Operators with inhomogeneous-type growth, which include some linear-growth terms, have only been studied in several specific cases. The first rigorous results in this setting are due to Marcellini and Miller \cite{MM}; since then, such problems were studied for instance for the anisotropic $p$-Laplacian \cite{Gor2023,MRST1}. However, the most relevant case for this paper are problems involving the $(1,p)$-Laplacian operator, i.e.,
\begin{equation}
\Delta_{(1,p)} u = \Delta_1 u + \Delta_p u
\end{equation}
with $p > 1$. Such problems appear in relation to fluid mechanics \cite{DL} (for $p=2$) and materials sciences \cite{S} (for $p=3$). Among them, the parabolic equation
\begin{equation}\label{1,pEq}
u_t - \Delta_1 u -  \Delta_p u = f
\end{equation}
for $p=2$ emerges when modeling the motion of a laminar Bingham fluid, the non-Newtonian fluid which has both plastic and viscous properties. In this model, the $1$-Laplacian $\Delta_1$ reflects the plasticity of a fluid and the Laplacian $\Delta$ its viscosity. Inspired by methods of machine learning, Nguyen \cite{VTN} has also studied the well-posedness of problem \eqref{1,pEq}, and the recent works of Tsubouchi \cite{T1,T2} address the regularity of solutions for the elliptic problem for the $(1,p)$-Laplacian.  Finally, a fourth-order problem of the type
\begin{equation}
u_t + \Delta(\Delta_1 u + \mu \Delta_p u) = 0
\end{equation}
has been discussed in \cite{GiGi}. This type of problems however requires a different approach, and the one used in that paper is to treat this equation as a gradient flow of a certain functional in the Hilbert space $H^{-1}$. However, in this paper we deal exclusively with second-order parabolic equations, and in our setting it is natural to study them in the Hilbert space $L^2$.

While evolution problems of $(q,p)$-Laplacian  type have been studied, as described above, in the classical setting, the literature on nonlocal problems is very thin. To the best of our knowledge, this type of problems for nonlocal operators in $\mathbb{R}^N$ defined via non-singular kernels has not been studied. Moreover, in the case of weighted graphs, we are only aware of a result given in \cite{HJ} concerning the elliptic problem for the $(q,p)$-Laplace operator (with $q,p > 1$) in the particular case of lattice graphs. Our aim is, in fact, to study these problems in the general framework of random walk spaces, which includes the two previous examples as particular cases.  In the present paper, we address at once the case of nonlocal problems involving a $(q,p)$-Laplacian type operator with $q > 1$ and the case of a $(1,p)$-Laplacian type operator.

The structure of this paper is as follows. In Section \ref{preliminaries}, we recall the basic information concerning nonlinear semigroup theory in Hilbert spaces, the definition of random walk spaces, and basic definitions and properties of nonlocal differential operators on random walk spaces. Section \ref{section3} is dedicated to the study of the evolution problem related to the functionals \eqref{0945} and \eqref{0945a},  and to the study of  the case of an evolution governed by two $1$-Laplacians. For these problems, we characterise the subdifferential of the respective functionals, which gives us a suitable notion of solution. We prove existence of solutions and study the asymptotic behaviour: in particular, we show that assuming a Poincar\'e-type inequality on the random walk space, a solution to the evolution problem for the functional \eqref{0945} has finite extinction time, and for the functional \eqref{0945a} it has finite extinction time if $q < 2$. Then, in Section \ref{section4}, we study the gradient flow of the functional \eqref{eldelaintro01}. We show how to adapt the technique from the previous section to this problem and also obtain a characterisation of the subdifferential, existence of solutions to the gradient flow, and we discuss the asymptotic behaviour. Finally, in Section \ref{sec:examples} we present several examples concerning the possible behaviour of solutions to gradient flows of the functionals \eqref{0945} and \eqref{eldelaintro01} in the case of finite weighted graphs.

\section{Preliminaries}\label{preliminaries}

\subsection{Convex analysis  and nonlinear semigroup theory}\label{ConAnal}

 Let us first recall some terminology and key results concerning the methods of convex analysis and nonlinear semigroup theory (see \cite{BCr2} and
\cite{Brezis}). If $H$ is a real Hilbert space with inner product $( \ , \ )$ and
$\mathcal{F}: H \rightarrow (- \infty, + \infty]$ is convex   and proper, i.e., such that its domain
$$\mathcal{D}(\mathcal{F}):= \{ u \in H \ : \ \mathcal{F}(u) < +\infty \}$$
is nonempty, then the subdifferential of  $\mathcal{F}$ is defined as the multivalued operator
$\partial \mathcal{F}$ given by
$$v \in \partial \mathcal{F}(u) \ \iff \ \mathcal{F}(w) - \mathcal{F}(u) \geq (v, w -u)
\ \ \ \forall \, w \in H,$$
  with domain $D(\partial \mathcal{F}) := \{ u \in H: \ \partial \mathcal{F}(u) \not= \emptyset \}$.
Consider the abstract Cauchy problem
\begin{equation}\label{ACP1}
\left\{ \begin{array}{ll} \frac{du}{dt} + \partial \mathcal{F} (u(t)) \ni f(t, \cdot), \, \quad &t \in (0, T),
  \\[10pt] u(0) = u_0, \quad &u_0 \in H. \end{array} \right.
\end{equation}

\begin{definition}\label{StronSol}{\rm We say that $u \in C([0,T]; H)$ is a {\it strong solution}\index{strong solution} of problem \eqref{ACP1}, if the following conditions hold: $u \in W_{\rm loc}^{1,2}(0,T;H)$; for almost all $t \in (0,T)$ we have $u(t) \in D(\partial \mathcal{F})$; and it satisfies \eqref{ACP1}.}
\end{definition}

We are now in position to state the celebrated {\it Brezis-K\={o}mura theorem} (see \cite{Brezis} and  \cite{ABS} for recent overviews).

\begin{theorem}\label{BKTheorem}\index{Brezis-K\={o}mura theorem} Let $\mathcal{F} : H \to (-\infty, \infty]$ be a proper, convex, and lower semicontinuous functional. Given $u_0 \in \overline{D(\partial \mathcal{F})}$ and $f \in L^2(0,T; H)$, there exists a unique strong solution $u(t)$ of the abstract Cauchy problem \eqref{ACP1}.
\end{theorem}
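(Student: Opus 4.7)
The plan is to prove the theorem via the classical Yosida-regularization approach for maximal monotone operators, so the first step is to establish that $\partial\mathcal{F}$ is maximal monotone on $H$. Monotonicity is immediate from the definition of the subdifferential: adding the inequalities defining $v_i\in\partial\mathcal{F}(u_i)$ for $i=1,2$ gives $(v_1-v_2,u_1-u_2)\geq 0$. Maximality is the substantive point; I would prove it by showing that the range $R(I+\lambda\partial\mathcal{F})=H$ for every $\lambda>0$. Given $u\in H$, the resolvent $J_\lambda u$ is defined as the minimizer of the strictly convex, coercive, lower semicontinuous functional $w\mapsto\mathcal{F}(w)+\tfrac{1}{2\lambda}\|w-u\|^2$; the existence of this minimizer (via the direct method) and the fact that optimality is equivalent to $u-J_\lambda u\in\lambda\partial\mathcal{F}(J_\lambda u)$ give both the well-definedness of $J_\lambda$ and the surjectivity needed for maximality.

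Next I would introduce the Yosida regularization $A_\lambda:=(I-J_\lambda)/\lambda$, which is a single-valued, monotone, $\tfrac{1}{\lambda}$-Lipschitz operator on $H$ and coincides with the gradient of the Moreau envelope $\mathcal{F}_\lambda(u):=\inf_w\{\mathcal{F}(w)+\tfrac{1}{2\lambda}\|w-u\|^2\}$. I would then solve the regularized Cauchy problem
\begin{equation}
\frac{du_\lambda}{dt}+A_\lambda(u_\lambda(t))=f(t),\qquad u_\lambda(0)=u_0,
\end{equation}
by the classical Cauchy-Lipschitz theorem, obtaining a global solution $u_\lambda\in C^1([0,T];H)$ for each $\lambda>0$.

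The core of the argument is then to derive $\lambda$-uniform a priori estimates and pass to the limit $\lambda\to 0^+$. Taking the inner product of the regularized equation with $A_\lambda(u_\lambda)$ and using $A_\lambda u=\partial\mathcal{F}_\lambda(u)$ together with the chain rule yields an energy identity of the form $\tfrac{d}{dt}\mathcal{F}_\lambda(u_\lambda)+\|u_\lambda'\|^2\leq \|f\|\,\|u_\lambda'\|$, which (using $\mathcal{F}_\lambda(u_0)\leq\mathcal{F}(u_0)$ when $u_0\in D(\mathcal{F})$, and a density/approximation argument for $u_0$ only in $\overline{D(\partial\mathcal{F})}$) bounds $u_\lambda'$ in $L^2(0,T;H)$ and $\mathcal{F}_\lambda(u_\lambda)$ in $L^\infty(0,T)$. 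The decisive estimate is the Cauchy property: subtracting the equations for $\lambda,\mu$, pairing with $u_\lambda-u_\mu$, and exploiting monotonicity of $A_\lambda$ together with the identity $A_\lambda u=A(J_\lambda u)$-in-spirit (more precisely, $J_\lambda u\to u$ as $\lambda\to 0$ with a quantitative rate controlled by $A_\lambda u$) yields
\begin{equation}
\tfrac{1}{2}\tfrac{d}{dt}\|u_\lambda-u_\mu\|^2\leq (\lambda+\mu)\,(\|A_\lambda u_\lambda\|+\|A_\mu u_\mu\|)\cdot \|A_\lambda u_\lambda\|,
\end{equation}
so that $(u_\lambda)$ is Cauchy in $C([0,T];H)$ and converges to some $u$.

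The hard part, and the one I expect to be most technical, is the identification of the limit: one must show $u(t)\in D(\partial\mathcal{F})$ for a.e.\ $t$ and that $f(t)-u'(t)\in\partial\mathcal{F}(u(t))$. This uses that $A_\lambda u_\lambda=\partial\mathcal{F}_\lambda(u_\lambda)\rightharpoonup f-u'$ weakly in $L^2(0,T;H)$ (from the uniform $L^2$ bound on $u_\lambda'$), that $J_\lambda u_\lambda\to u$ strongly (since $\|J_\lambda u_\lambda-u_\lambda\|=\lambda\|A_\lambda u_\lambda\|\to 0$ in $L^2$), together with the fact that the graph of $\partial\mathcal{F}$ is strongly-weakly closed in $H\times H$ when realized on $L^2(0,T;H)$ — a lower-semicontinuity/Fatou argument applied to the subdifferential inequality. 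Uniqueness is then immediate from monotonicity: if $u,\tilde u$ are two strong solutions, pairing the difference of equations with $u-\tilde u$ gives $\tfrac{d}{dt}\|u-\tilde u\|^2\leq 0$, and since $u(0)=\tilde u(0)=u_0$, we conclude $u\equiv\tilde u$. The final step is to remove the assumption $u_0\in D(\partial\mathcal{F})$ used to get $\mathcal{F}_\lambda(u_0)$ bounded: approximating $u_0\in\overline{D(\partial\mathcal{F})}$ by $u_0^n\in D(\partial\mathcal{F})$ and using the contraction estimate above shows the corresponding strong solutions form a Cauchy sequence in $C([0,T];H)$, and the limit is the desired strong solution.
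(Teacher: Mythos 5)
The paper does not actually prove this statement: Theorem \ref{BKTheorem} is the classical Brezis--K\={o}mura theorem, quoted from \cite{Brezis} (see also \cite{ABS}) and used as a black box. Your proposal can therefore only be measured against the standard proof, and what you have written is essentially that proof: maximal monotonicity of $\partial\mathcal{F}$ via the proximal map, Yosida--Moreau regularization, Cauchy--Lipschitz for the regularized flow, the energy and doubling-of-parameters estimates, convergence of $u_\lambda$ in $C([0,T];H)$, identification of the limit by strong--weak closedness of the maximal monotone graph, and uniqueness by monotonicity. All of these steps are sound.

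The one genuine gap is your final sentence. Approximating $u_0\in\overline{D(\partial\mathcal{F})}$ by $u_0^n\in D(\partial\mathcal{F})$ and invoking the contraction estimate gives convergence of the solutions $u^n$ in $C([0,T];H)$, but this alone does not show that the limit is a strong solution in the sense of Definition \ref{StronSol}: uniform convergence carries no information about $\frac{du}{dt}$ near $t=0$, and for a general maximal monotone operator (not of subdifferential type) the limit of strong solutions with merely converging initial data is only a mild solution. What is missing is the regularizing estimate that exploits the subdifferential structure: testing the equation for $u^n$ (equivalently, for $u_\lambda$) against $t\,\frac{du^n}{dt}(t)$ and using the chain rule $\frac{d}{dt}\mathcal{F}(u^n(t))=\left(\xi,\frac{du^n}{dt}(t)\right)$ for every $\xi\in\partial\mathcal{F}(u^n(t))$, one obtains a bound on $\int_0^T t\,\big\Vert \frac{du^n}{dt}(t)\big\Vert^2\,dt$ depending only on $\Vert u_0^n\Vert$, $\Vert f\Vert_{L^2(0,T;H)}$ and an affine minorant of $\mathcal{F}$, hence uniform in $n$. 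This is what yields $u\in W^{1,2}_{\rm loc}(0,T;H)$ and $u(t)\in D(\partial\mathcal{F})$ for a.e.\ $t$, i.e.\ exactly the content of Definition \ref{StronSol}; it is also the estimate that lets you dispense with the assumption $u_0\in D(\mathcal{F})$ in the $\lambda$-uniform bounds. So this weighted-in-time estimate is the decisive ingredient you have left implicit rather than a cosmetic omission.
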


The following result given in \cite[Proposition 2.11]{Brezis} is useful to characterise the closure of the domain of the subdifferential.

\begin{proposition}\label{A.Domain}
Let $\mathcal{F} : H \to (-\infty, \infty]$ be a proper, convex, and lower semicontinuous functional. Then,
$$D(\partial \mathcal{F}) \subset D( \mathcal{F}) \subset \overline{ D( \mathcal{F})} = \overline{ D( \partial \mathcal{F})}.$$
\end{proposition}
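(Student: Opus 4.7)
The plan is to prove the three claims in order, with the non-trivial work concentrated in the equality $\overline{D(\mathcal{F})} = \overline{D(\partial \mathcal{F})}$.

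First, the inclusion $D(\partial \mathcal{F}) \subset D(\mathcal{F})$ is built into the definition of the subdifferential: the defining inequality $\mathcal{F}(w) - \mathcal{F}(u) \geq (v, w - u)$ presupposes that $\mathcal{F}(u)$ is finite (otherwise $\mathcal{F}(w) - \mathcal{F}(u)$ is not even meaningful for $w \in D(\mathcal{F})$, which is nonempty since $\mathcal{F}$ is proper), so any $u$ with $\partial \mathcal{F}(u) \neq \emptyset$ already lies in $D(\mathcal{F})$. The middle inclusion $D(\mathcal{F}) \subset \overline{D(\mathcal{F})}$ is trivial, and taking closures in the first one gives $\overline{D(\partial \mathcal{F})} \subset \overline{D(\mathcal{F})}$.

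The substantive step is the reverse inclusion $\overline{D(\mathcal{F})} \subset \overline{D(\partial \mathcal{F})}$. I would show that every $u \in D(\mathcal{F})$ lies in $\overline{D(\partial \mathcal{F})}$ by means of the Moreau-Yosida regularization: for each $\lambda > 0$, let $u_\lambda$ be the unique minimizer of
$$v \mapsto \mathcal{F}(v) + \tfrac{1}{2\lambda}\|v - u\|^2.$$
Existence and uniqueness follow from lower semicontinuity, strict convexity of the quadratic term, and coercivity, the last being secured via the standard affine lower bound $\mathcal{F}(v) \geq (a, v) + b$ obtained by separating the closed convex epigraph of $\mathcal{F}$ from a point strictly below it (Hahn-Banach). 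Fermat's rule applied to the regularized functional then gives $\tfrac{u - u_\lambda}{\lambda} \in \partial \mathcal{F}(u_\lambda)$, so $u_\lambda \in D(\partial \mathcal{F})$.

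To finish, I need $u_\lambda \to u$ as $\lambda \to 0^+$. Testing the minimality with the competitor $v = u$ yields $\mathcal{F}(u_\lambda) + \tfrac{1}{2\lambda}\|u_\lambda - u\|^2 \leq \mathcal{F}(u)$, and hence $\|u_\lambda - u\|^2 \leq 2\lambda (\mathcal{F}(u) - \mathcal{F}(u_\lambda))$. Inserting the affine minorant $\mathcal{F}(u_\lambda) \geq (a, u_\lambda) + b$ and applying Cauchy-Schwarz, I obtain an inequality of the form $\|u_\lambda - u\|^2 \leq C_1 \lambda + C_2 \lambda \|u_\lambda - u\|$, which forces $\|u_\lambda - u\| = O(\sqrt{\lambda})$ as $\lambda \to 0^+$. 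The main conceptual obstacle is really the existence of the continuous affine minorant, which is the one ingredient external to the statement; with it in hand, everything reduces to routine bookkeeping around the variational problem defining $u_\lambda$.
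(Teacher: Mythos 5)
Your proof is correct, and it is essentially the standard argument: the paper does not prove this proposition but cites it from Brezis (Proposition 2.11), whose proof is precisely this one --- the trivial inclusions plus the Moreau--Yosida/resolvent construction $u_\lambda=(I+\lambda\,\partial\mathcal{F})^{-1}u\in D(\partial\mathcal{F})$ together with the estimate $\|u_\lambda-u\|^2\le 2\lambda\big(\mathcal{F}(u)-\mathcal{F}(u_\lambda)\big)$ and the continuous affine minorant to conclude $u_\lambda\to u$. The only step you leave implicit is that passing from minimality of $u_\lambda$ to $\tfrac{u-u_\lambda}{\lambda}\in\partial\mathcal{F}(u_\lambda)$ uses the subdifferential sum rule (or the standard convexity trick with competitors $u_\lambda+t(w-u_\lambda)$), which is routine since the quadratic term is everywhere continuous.
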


 We will also need the following characterisation of the subdifferential of an operator which is positive homogeneous of degree $1$ (i.e., $\Psi(t u) = t \Psi(u)$ for all $t \geq 0$ and $u \in H$) given in \cite{ACMBook}. Given such functional $\Psi : H \rightarrow [0,\infty]$, we define $\tilde{\Psi} : H \rightarrow [0,\infty]$ by
\begin{equation}\label{phitilde}
\tilde{\Psi}(v):= \sup \left\{ \frac{(v,u)}{\Psi(u)}: \ u \in H \right\}
\end{equation}
with the convention that $\frac00 = 0$ and $\frac{0}{\infty} =0$. Obviously, if $\Psi_1 \leq \Psi_2$, then $\widetilde{\Psi_2} \leq \widetilde{\Psi_1}.$

\begin{proposition}\label{degree1}
If $\Psi$ is convex, lower semicontinuous and positive homogeneous of degree $1$, then $\tilde{\tilde{\Psi}} = \Psi$. Moreover,
$$v \in \partial \Psi (u) \iff \tilde{\Psi}(v) \leq 1 \ \hbox{and} \ (v, u) = \Psi(u).$$
\end{proposition}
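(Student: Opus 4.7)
The plan is to prove the second statement (the characterization of $\partial\Psi(u)$) first, and then to deduce $\tilde{\tilde\Psi} = \Psi$ from it; both parts rely crucially on the positive $1$-homogeneity of $\Psi$.

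For the characterization of $\partial\Psi(u)$, I would start with $v \in \partial\Psi(u)$, so that $\Psi(w) - \Psi(u) \geq (v, w-u)$ for every $w \in H$. Testing at $w = 0$ gives $(v,u) \geq \Psi(u)$, and testing at $w = 2u$, combined with the identity $\Psi(2u) = 2\Psi(u)$, gives the reverse inequality. Hence $(v,u) = \Psi(u)$, and the subgradient inequality then reduces to $(v,w) \leq \Psi(w)$ for all $w \in H$, which by the definition of $\tilde\Psi$ (together with the stated conventions on the quotient) is equivalent to $\tilde\Psi(v) \leq 1$. The converse implication is immediate: if $\tilde\Psi(v) \leq 1$ and $(v,u) = \Psi(u)$, then $\Psi(w) - \Psi(u) \geq (v,w) - (v,u) = (v, w-u)$ for every $w \in H$.

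For $\tilde{\tilde\Psi} = \Psi$, the key ingredient is the Fenchel--Young-type inequality
$$(v,u) \leq \tilde\Psi(v)\, \Psi(u) \qquad \forall\, u, v \in H,$$
which follows at once from the definition of $\tilde\Psi$, once the degenerate cases $\Psi(u) \in \{0, \infty\}$ are handled via the stated conventions. This yields $\tilde{\tilde\Psi} \leq \Psi$ directly. For the reverse inequality, I would apply the characterization of the subdifferential just proved at the point $u = 0$: it identifies $\{v \in H : \tilde\Psi(v) \leq 1\} = \partial\Psi(0)$. Since $\Psi$ is proper, convex, lower semicontinuous, and positively $1$-homogeneous with $\Psi(0) = 0$, the classical fact that every such functional equals the support function of its subdifferential at the origin then gives
$$\Psi(u) = \sup_{v \in \partial\Psi(0)} (v,u) = \sup_{\tilde\Psi(v) \leq 1}\, (v,u) \leq \tilde{\tilde\Psi}(u),$$
which completes the argument.

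The main obstacle is twofold: the careful bookkeeping of the boundary cases dictated by the conventions $\tfrac{0}{0} = 0 = \tfrac{0}{\infty}$, needed to guarantee that the Fenchel--Young inequality holds universally; and the invocation of the representation of a proper, convex, lower semicontinuous, positively $1$-homogeneous functional as the support function of $\partial\Psi(0)$, which itself rests on a Hahn--Banach separation applied to the epigraph. Since the statement is quoted from the monograph \cite{ACMBook}, in the paper it would be natural simply to refer to that source rather than reproduce the argument.
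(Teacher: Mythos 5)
Your proof is correct. Note, however, that the paper itself offers no proof of Proposition \ref{degree1}: it is quoted verbatim from \cite{ACMBook}, so there is nothing internal to compare against. Your argument is essentially the standard convex-duality proof found in that reference: the subdifferential characterisation follows from testing the subgradient inequality at $w=0$ and $w=2u$ together with $1$-homogeneity, and the identity $\tilde{\tilde{\Psi}}=\Psi$ amounts to recognising $\{v:\tilde{\Psi}(v)\le 1\}=\partial\Psi(0)=\{v:(v,w)\le\Psi(w)\ \forall w\}$ and invoking that a proper, convex, lower semicontinuous, positively $1$-homogeneous functional is the support function of this set (equivalently, $\Psi^*$ is the indicator of that set and $\Psi=\Psi^{**}$). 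The only places where you gloss over details are harmless: the degenerate cases $\Psi(u)\in\{0,\infty\}$ in the Fenchel--Young inequality do work out under the stated conventions (in particular, $\Psi(u)=0$ with $(v,u)>0$ forces $\tilde{\Psi}(v)=+\infty$, so the quotient defining $\tilde{\tilde{\Psi}}$ vanishes there), and the final step $\sup_{\tilde{\Psi}(v)\le 1}(v,u)\le\tilde{\tilde{\Psi}}(u)$ deserves the one-line justification that for $v$ with $0<\tilde{\Psi}(v)\le 1$ and $(v,u)>0$ one has $(v,u)\le (v,u)/\tilde{\Psi}(v)$, while the supremum on the left is always nonnegative because $v=0$ is admissible.
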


Let us also collect some preliminaries and notations concerning
completely accretive operators that will be used afterwards (see
\cite{BCr2}). Let $(\Sigma, \mathcal{B}, \mu)$ be a $\sigma$-finite
measure space, and $M(\Sigma,\mu)$ the space of $\mu$-a.e. equivalent
classes of measurable functions $u : \Sigma\to \R$.
Let
  \begin{displaymath}
    J_0:= \Big\{ j : \R \rightarrow
    [0,+\infty] : \text{$j$ is convex, lower
      semicontinuous, }j(0) = 0 \Big\}.
  \end{displaymath}
For every $u$, $v\in M(\Sigma,\mu)$, we write
  \begin{displaymath}
  u\ll v \quad \text{if and only if}\quad \int_{\Sigma} j(u)
  \,d\mu \le \int_{\Sigma} j(v) \, d\mu\quad\text{for all $j\in J_{0}$.}
\end{displaymath}

\begin{definition}{\rm
  An operator $A$ on $M(\Sigma,\mu)$ is called {\it completely
    accretive} if for every $\lambda>0$ and
  for every $(u_1,v_1)$, $(u_{2},v_{2}) \in A$ and $\lambda >0$, one
  has that
  \begin{displaymath}
    u_1 - u_2 \ll u_1 - u_2 + \lambda (v_1 - v_2).
  \end{displaymath}
  If $X$ is a linear subspace of $M(\Sigma,\mu)$ and $A$ an operator
  on $X$, then $A$ is {\it $m$-completely accretive on $X$} if $A$ is
  completely accretive and satisfies the {\it range
  condition}
\begin{displaymath}
  \textrm{Ran}(I+\lambda A)=X\qquad\text{for some (or equivalently, for
    all) $\lambda>0$.}
\end{displaymath}
}
\end{definition}

We denote
\begin{displaymath}
L_0(\Sigma,\mu):= \left\{ u \in M(\Sigma,\mu): \ \int_{\Sigma} \big[\vert u \vert - k\big]^+\, d\mu < \infty \text{ for all $k > 0$} \right\}.
\end{displaymath}
The following result  was proved in \cite{BCr2}.

\begin{proposition}\label{prop:completely-accretive}
Let $P_{0}$ denote the set of all functions $q\in C^{\infty}(\R)$ satisfying $0\le q'\le 1$, $q'$  is compactly supported, and $0$ is not contained in the support ${\rm supp}(q)$ of $q$. Then, an operator $A \subseteq L_{0}(\Sigma,\mu)\times L_{0}(\Sigma,\mu)$ is  completely accretive if and only if
\begin{displaymath}
\int_{\Sigma}q(u-\hat{u})(v-\hat{v})\, d\mu \ge 0
\end{displaymath}
for every $q\in P_{0}$ and every $(u,v)$, $(\hat{u},\hat{v})\in A$.
\end{proposition}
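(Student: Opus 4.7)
The plan is to prove both implications by linking the family $P_0$ to the class $J_0$ through the primitive operation $q \mapsto j_q(r) := \int_0^r q(s)\,ds$. The first observation I would establish is that for every $q \in P_0$, the function $j_q$ belongs to $J_0$: it is smooth, vanishes at $0$, is convex since $q' \geq 0$, and is nonnegative because $q$ vanishes in a neighborhood of $0$ and is monotone non-decreasing (hence $q \geq 0$ on $[0,\infty)$ and $q \leq 0$ on $(-\infty,0]$, forcing $j_q \geq 0$ on all of $\mathbb{R}$).

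For the forward direction, I would fix $q \in P_0$ and pairs $(u,v), (\hat u, \hat v) \in A$ and apply the defining relation $u - \hat u \ll u - \hat u + \lambda(v - \hat v)$ with the test function $j_q \in J_0$, obtaining
$$\int_\Sigma j_q(u - \hat u)\,d\mu \ \leq\ \int_\Sigma j_q\bigl(u - \hat u + \lambda(v - \hat v)\bigr)\,d\mu \qquad \text{for every } \lambda > 0.$$
Subtracting, dividing by $\lambda$, and sending $\lambda \to 0^+$ should yield the desired inequality $\int_\Sigma q(u - \hat u)(v - \hat v)\,d\mu \geq 0$. To pass to the limit I would invoke dominated convergence: the difference quotient of $j_q$ is bounded pointwise by $\|q\|_\infty |v - \hat v|$ since $j_q$ is $\|q\|_\infty$-Lipschitz, and it is supported on $\{|u - \hat u| \geq \delta\}$, where $\delta > 0$ is the distance from $0$ to $\mathrm{supp}(q)$; this set has finite measure thanks to $u, \hat u \in L_0(\Sigma,\mu)$.

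For the reverse direction, I would fix $j \in J_0$ and $\lambda > 0$ and exploit convexity of $j$ to obtain the pointwise bound
$$j\bigl(u - \hat u + \lambda(v - \hat v)\bigr) - j(u - \hat u) \ \geq\ \lambda\, \eta(u - \hat u)\,(v - \hat v)$$
for any Borel selection $\eta(r) \in \partial j(r)$. Integrating, the target inequality reduces to showing $\int_\Sigma \eta(u-\hat u)(v-\hat v)\,d\mu \geq 0$, which the hypothesis delivers directly when $\eta \in P_0$. For general $j$ I would approximate $\eta$ by a sequence $q_n \in P_0$ by first regularizing $j$ via a Yosida/mollification procedure to make $\eta$ smooth and bounded, then truncating $\eta$ to zero on a shrinking neighborhood of the origin and outside a growing compact set, and finally multiplying by a small positive scalar to enforce $0 \leq q_n' \leq 1$. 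Since the hypothesis is preserved under positive scalar multiplication and positive linear combinations of elements of $P_0$, the approximating inequalities pass to the limit by monotone/dominated convergence, yielding the claim.

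The main obstacle is the approximation step in the reverse direction: the derivative of a generic $j \in J_0$ is in general neither bounded, nor Lipschitz, nor compactly supported, nor vanishing near $0$, so the sequence $q_n \in P_0$ must be engineered carefully. Controlling integrability on both sides — the left-hand side through the $L_0$-truncation property of $u, \hat u$ and the right-hand side via the Lipschitz/compact-support structure of the $q_n$ — and splitting the integrand according to the sign of $(v - \hat v)$ so that each piece can be treated by monotone convergence (or Fatou) is the principal technical point that must be handled with care.
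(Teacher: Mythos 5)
The paper does not actually prove this proposition; it is quoted verbatim from Bénilan--Crandall \cite{BCr2}, so there is no in-paper argument to compare against and I am judging your proposal on its own terms. Your forward direction is essentially correct: $j_q(r)=\int_0^r q$ does lie in $J_0$, the difference quotients are dominated, and the limit passage works. Two small repairs are needed there: before ``subtracting'' you should note that $\int_\Sigma j_q(u-\hat u)\,d\mu<\infty$, which follows from $j_q(r)\le \|q\|_\infty[|r|-\delta]^+$ and $u-\hat u\in L_0(\Sigma,\mu)$; and your claim that the difference quotient is supported on $\{|u-\hat u|\ge\delta\}$ is false for fixed $\lambda$ (the point $u-\hat u+\lambda t(v-\hat v)$ can leave $(-\delta,\delta)$ even if $u-\hat u$ does not). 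The fix is to enlarge the exceptional set to $\{|u-\hat u|\ge\delta/2\}\cup\{|v-\hat v|\ge\delta/2\}$, which still has finite measure and on which $|v-\hat v|$ is integrable, again by the $L_0$ property.

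The reverse direction contains a genuine gap. Your reduction of the target inequality to $\int_\Sigma \eta(u-\hat u)(v-\hat v)\,d\mu\ge 0$ with $\eta\in\partial j$ is not legitimate for a general $j\in J_0$: since $\eta$ is typically unbounded, does not vanish near $0$, and $v-\hat v$ is only in $L_0(\Sigma,\mu)$ (not $L^1$), this integral need not be absolutely convergent (take $j(r)=r^2$, $\eta(r)=2r$), and moreover if $\int_\Sigma j(u-\hat u)\,d\mu=+\infty$ you cannot integrate the subgradient inequality at all. The subsequent limit passage $\int q_n(u-\hat u)(v-\hat v)\,d\mu\ge 0\Rightarrow\int\eta(u-\hat u)(v-\hat v)\,d\mu\ge 0$ then fails: there is no integrable dominating function, and the integrand changes sign in a way that splitting by the sign of $v-\hat v$ does not cure ($q_n(u-\hat u)$ itself changes sign), so neither Fatou nor monotone convergence applies. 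The repair is to reorder the limits: for each \emph{approximant} $q$ (any positive multiple of a $P_0$ function, which is allowed since the hypothesis is scale-invariant) integrate the subgradient inequality $j_q(a+\lambda b)-j_q(a)\ge\lambda q(a)b$ --- here every term is integrable because $q$ is bounded and vanishes near $0$ --- to get the \emph{integral} inequality $\int j_q(u-\hat u)\,d\mu\le\int j_q(u-\hat u+\lambda(v-\hat v))\,d\mu$, and only then pass to the limit in this inequality using a monotone approximation $j_{q_n}\nearrow j$ and monotone convergence on both sides. Producing such a monotone family requires a generation argument (e.g.\ reducing first to the elementary functions $r\mapsto(\pm r-k)^+$, $k>0$, via the integral representation of convex functions, or to Moreau--Yosida regularizations of $j$), which is precisely the structure your sketch is missing.
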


\subsection{Random walk spaces}\label{RWS1}

We recall some concepts and results about random walk spaces  given in \cite{MST0,MST1} and the monograph \cite{MSTBook}. Let $(X,\mathcal{B})$ be a measurable space such that the $\sigma$-field $\mathcal{B}$ is countably generated. A random walk $m$
on $(X,\mathcal{B})$ is a family of probability measures $(m_x)_{x\in X}$
on $\mathcal{B}$ such that $x\mapsto m_x(B)$ is a measurable function on $X$ for each fixed $B\in\mathcal{B}$.

The notation and terminology chosen in this definition comes from Ollivier's paper \cite{O}. As noted in that paper, geometers may think of $m_x$ as a replacement for the notion of balls around $x$, while in probabilistic terms we can rather think of these probability measures as defining a Markov chain whose transition probability from $x$ to $y$ in $n$ steps is
\begin{equation}
\displaystyle
dm_x^{*n}(y):= \int_{z \in X}  dm_z(y) \, dm_x^{*(n-1)}(z), \ \ n\ge 1
\end{equation}
and $m_x^{*0} = \delta_x$, the Dirac measure at $x$.

\begin{definition}\label{convolutionofameasure}{\rm
Let $m$ be a random walk on $(X,\mathcal{B})$ and $\mu$ a $\sigma$-finite measure on $X$. The convolution of $\mu$ with $m$ on $X$ is the measure defined as follows:
$$\mu \ast m (A) := \int_X m_x(A) \, d\mu(x)\ \ \forall A\in\mathcal{B},$$
which is the image of $\mu$ by the random walk $m$.}
\end{definition}

\begin{definition}\label{def.invariant.measure} {\rm If $m$ is a random walk on $(X,\mathcal{B})$,
a $\sigma$-finite measure $\nu$ on $X$ is {\it invariant}
with respect to the random walk $m$ if
$$\nu\ast m = \nu.$$
The measure $\nu$ is said to be {\it reversible} if moreover the detailed balance condition $$dm_x(y) \, d\nu(x)  = dm_y(x) \, d\nu(y) $$ holds true.}
\end{definition}

\begin{definition}\label{DefMRWSf}{\rm
Let $(X,\mathcal{B})$ be a measurable space where the $\sigma$-field $\mathcal{B}$ is countably generated. Let $m$ be a random walk on $(X,\mathcal{B})$ and $\nu$ a $\sigma$-finite measure which is invariant  with respect to $m$. The measurable space together with $m$ and $\nu$ is then called a random walk space
and is denoted by $[X,\mathcal{B},m,\nu]$.}
\end{definition}

\begin{definition}\label{def.m.interaction}{\rm
Let  $[X,\mathcal{B},m,\nu]$ be a random walk space and let $A$, $B\in\mathcal{B}$. We define the {\it $m$-interaction} between $A$ and $B$ as
\begin{equation}\label{m.interaction} L_m(A,B):= \int_A \int_B dm_x(y) \, d\nu(x) = \int_A m_x(B) \, d\nu(x).
 \end{equation}
 }
\end{definition}

\begin{definition}\label{def.m.connected.random.walk.space}{\rm
Let $[X,\mathcal{B},m,\nu]$ be a random walk space. We say that $[X,\mathcal{B},m,\nu]$ is $m$-connected, if for $ A,B\in\mathcal{B}$ which satisfy $A\cup B=X$ and $L_m(A,B)= 0$, we have either $\nu(A)=0$ or $\nu(B)=0$.
}
\end{definition}

Some equivalent characterisations of $m$-connectedness can be found in \cite{MST0}. Now, let us present several examples of random walk spaces.

\begin{example}\label{example.nonlocalJ} \rm
Consider the metric measure space $(\R^N, d, \mathcal{L}^N)$, where $d$ is the Euclidean distance and $\mathcal{L}^N$ the Lebesgue measure on $\R^N$, and $\mathcal{B}$ be the Borel   $\sigma$-algebra. For simplicity, we will write $dx$ instead of $d\mathcal{L}^N(x)$.  Let  $J:\R^N\to[0,+\infty)$ be a measurable, nonnegative and radially symmetric
function  verifying  $\int_{\R^N}J(x) \, dx=1$. Let $m^J$ be the following random walk on $(\R^N, \mathcal{B})$: 
\index{m-J@$m^J$} 
$$m^J_x(A) :=  \int_A J(x - y) \, dy \quad \hbox{ for every $x\in \R^N$ and every Borel set } A \subset  \R^N  .$$
Then, applying Fubini's Theorem it is easy to see that the Lebesgue measure $\mathcal{L}^N$ is reversible with respect to $m^J$. Therefore,  $[\R^N,  \mathcal{B}, m^J, \mathcal{L}^N]$  is a  reversible random walk space.
\end{example}

\begin{example}\label{example.graphs}[Weighted discrete graphs] \rm Consider a locally finite  weighted discrete graph $$G = (V(G), E(G)),$$ where $V(G)$ is the vertex set, $E(G)$ is the edge set and each edge $(x,y) \in E(G)$ has an assigned positive weight $w_{xy} = w_{yx}$ (we will write $x\sim y$ if $(x,y) \in E(G)$). Suppose further that $w_{xy} = 0$ if $(x,y) \not\in E(G)$.  Note that there may be loops in the graph, that is, we may have $(x,x)\in E(G)$ for some $x\in V(G)$ and, therefore, $w_{xx}>0$. Recall that a graph is locally finite if every vertex is only contained in a finite number of edges.

 A finite sequence $\{ x_k \}_{k=0}^n$  of vertices of the graph is called a {\it  path} if $x_k \sim x_{k+1}$ for all $k = 0, 1, ..., n-1$. The {\it length} of a path $\{ x_k \}_{k=0}^n$ is defined as the number $n$ of edges in the path. With this terminology, $G = (V(G), E(G))$ is said to be {\it connected} if, for any two vertices $x, y \in V$, there is a path connecting $x$ and $y$, that is, a path $\{ x_k \}_{k=0}^n$ such that $x_0 = x$ and $x_n = y$.  Finally, if $G = (V(G), E(G))$ is connected, the {\it graph distance} $d_G(x,y)$ between any two distinct vertices $x, y$ is defined as the minimum of the lengths of the paths connecting $x$ and $y$. Note that this metric is independent of the weights.

For $x \in V(G)$ we define the weight at $x$ as
$$d_x:= \sum_{y\sim x} w_{xy} = \sum_{y\in V(G)} w_{xy},$$
and the neighbourhood of $x$ as $N_G(x) := \{ y \in V(G) \, : \, x\sim y\}$. Note that, by definition of locally finite graph, the sets $N_G(x)$ are finite. When all the non-null weights are $1$, $d_x$ coincides with the degree of the vertex $x$ in a graph, that is,  the number of edges containing $x$.

For each $x \in V(G)$  we define the following probability measure 
  \index{m-G@$m$} 
\begin{equation}\label{discRW}m_x:=  \frac{1}{d_x}\sum_{y \sim x} w_{xy}\,\delta_y.\\ \\
\end{equation}
It is not difficult to see that the measure $\nu$ defined as
 $$\nu(A):= \sum_{x \in A} d_x,  \quad A \subset V(G),$$
is a reversible measure with respect to this random walk. Therefore, $[V(G),\mathcal{B},m,\nu]$ is a reversible random walk space,  where $\mathcal{B}$ is  the $\sigma$-algebra of all subsets of $V(G)$.

\end{example}

\begin{example}\label{example.restriction.to.Omega} \rm { Given a random walk  space $[X,\mathcal{B},m,\nu]$ and $\Omega \in \mathcal{B}$ with $\nu(\Omega) > 0$, let
  $\mathcal{B}_\Omega$ be the   $\sigma$-algebra
 $$\mathcal{B}_\Omega:=\{B\in\mathcal{B} \, : \, B\subset \Omega\}.$$
 Let us now define}
$$m^{\Omega}_x(A):=\int_A d m_x(y)+\left(\int_{X\setminus \Omega}d m_x(y)\right)\delta_x(A) \quad \hbox{ for every } A\in\mathcal{B}_\Omega  \hbox{ and } x\in\Omega.
$$
Then, $m^{\Omega}$ is a random walk on $(\Omega,\mathcal{B}_\Omega)$ and it easy to see that $\nu \res \Omega$ is invariant with respect to $m^{\Omega}$. Therefore,  $[\Omega,\mathcal{B}_\Omega,m^{\Omega},\nu \res \Omega]$ is a random walk space. Moreover, if $\nu$ is reversible with respect to $m$ then $\nu \res \Omega$ is  reversible with respect to $m^{\Omega}$. Of course, if $\nu$ is a probability measure we may normalise $\nu \res \Omega$ to obtain the random walk space
$$\left[\Omega,\mathcal{B}_\Omega,m^{\Omega}, \frac{1}{\nu(\Omega)}\nu \res \Omega \right].$$
In particular, in the context of Example \ref{example.nonlocalJ}, if $\Omega$ is a closed and bounded subset of $\R^N$, we obtain the  random walk space $[\Omega,\mathcal{B}_\Omega, m^{J,\Omega},\mathcal{L}^N\res \Omega]$ where 
\index{m-JOmega@$m^{J,\Omega}$} 
$m^{J,\Omega} := (m^J)^{\Omega}$; that is,
$$m^{J,\Omega}_x(A):=\int_A J(x-y) \, dy + \left(\int_{\R^n\setminus \Omega} J(x-z) \, dz \right) \delta_x(A)$$ for every Borel set   $A \subset  \Omega$  and $x\in\Omega$.

\end{example}

Another important example of random walk spaces are weighted hypergraphs;  for their presentation in this framework, we refer to \cite[Example 1.44]{MSTBook}.

\subsection{Nonlocal differential operators}\label{nonlocal.notions.1.section}

Let us introduce the nonlocal counterparts of some classical concepts.

\begin{definition}\label{nonlocalgraddiv}{\rm
Let $[X,\mathcal{B},m,\nu]$ be a random walk space. Given a function $u : X \rightarrow \R$ we define its {\it nonlocal gradient}
$\nabla u: X \times X \rightarrow \R$ as
$$\nabla u (x,y):= u(y) - u(x) \quad \forall \, x,y \in X.$$
Moreover, given $\z : X \times X \rightarrow \R$, its {\it $m$-divergence} 
${\rm div}_m \z : X \rightarrow \R$ is defined as
 $$({\rm div}_m \z)(x):= \frac12 \int_{X} (\z(x,y) - \z(y,x)) \, dm_x(y).$$
 }
\end{definition}

Let us consider the {\it tensor product} of $\nu$ and $m$, denoted by $\nu\otimes m_x$, which is the measure in $X \times X$ defined by
$$(\nu\otimes m_x)(A \times B):= \int_A \left(\int_B dm_x(y) \right) d\nu(x).$$
We have the following {\it integration by parts formula}.

\begin{proposition}\label{FIPartes1} If $v \in L^p(X, \nu)$ and $\z \in   L^{p'}(X \times X, \nu\otimes m_x)$, then
\begin{equation}\label{IntbP0}
\int_X v(x) \, {\rm div}_m (\z) (x) \, d \nu(x) = - \frac12 \int_{X \times X} \z(x,y) \, \nabla v(x,y) \, d(\nu\otimes m_x)(x,y).
\end{equation}
\end{proposition}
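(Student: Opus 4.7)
The plan is to unfold the left-hand side using the definition of ${\rm div}_m$, split it into two pieces, and use the swap-symmetry $d\nu(x)dm_x(y)=d\nu(y)dm_y(x)$ (detailed balance) to relabel one of the pieces so that the two integrals can be combined into the nonlocal gradient $\nabla v$. This symmetry of $\nu\otimes m_x$ is the content of reversibility, which holds in all the examples considered (and which I take as present in the ambient framework).

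First, I would check that every integral below is absolutely convergent, so that Fubini may be applied. Since $v\in L^p(X,\nu)$ and $\z\in L^{p'}(X\times X,\nu\otimes m_x)$, H\"older's inequality gives
\begin{equation*}
\int_{X\times X}|v(x)\z(x,y)|\,d(\nu\otimes m_x)(x,y)\le \|v\|_{L^p(X,\nu)}\,\|\z\|_{L^{p'}(X\times X,\nu\otimes m_x)},
\end{equation*}
and by the reversibility/swap-symmetry of $\nu\otimes m_x$ the same bound controls $\int|v(x)\z(y,x)|\,d(\nu\otimes m_x)$. Both pieces of the LHS are therefore finite.

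Next I would write
\begin{equation*}
\int_X v(x)\,{\rm div}_m\z(x)\,d\nu(x)=\frac12\int_{X\times X}v(x)\z(x,y)\,d(\nu\otimes m_x)-\frac12\int_{X\times X}v(x)\z(y,x)\,d(\nu\otimes m_x),
\end{equation*}
which is immediate from the definition of ${\rm div}_m$ together with Fubini. In the second integral I would perform the change of labels $x\leftrightarrow y$: by detailed balance the measure $\nu\otimes m_x$ is invariant under the swap $(x,y)\mapsto (y,x)$, so
\begin{equation*}
\int_{X\times X}v(x)\z(y,x)\,d(\nu\otimes m_x)(x,y)=\int_{X\times X}v(y)\z(x,y)\,d(\nu\otimes m_x)(x,y).
\end{equation*}
Substituting back and factoring out $\z(x,y)$ yields
\begin{equation*}
\int_X v(x)\,{\rm div}_m\z(x)\,d\nu(x)=\frac12\int_{X\times X}\z(x,y)\bigl(v(x)-v(y)\bigr)\,d(\nu\otimes m_x)(x,y),
\end{equation*}
which is precisely the right-hand side of \eqref{IntbP0} since $\nabla v(x,y)=v(y)-v(x)$.

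The only real point requiring care is the second step: the swap invariance of $\nu\otimes m_x$ is reversibility, a strictly stronger assumption than the invariance appearing in Definition \ref{DefMRWSf}. This is the single place where the detailed balance condition is used, and it is indispensable — without it the two off-diagonal pieces cannot be reassembled into $\nabla v$. All remaining steps are bookkeeping: verifying the H\"older bound that licenses Fubini, and assembling the algebraic identity.
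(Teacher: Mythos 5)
Your proof is correct and is exactly the standard argument underlying this formula (the paper states the proposition without proof, but this is the computation used in the cited monograph and implicitly throughout the paper): unfold ${\rm div}_m$, justify Fubini via H\"older together with the fact that $m_x$ is a probability measure and $\nu$ is invariant, and use the swap-invariance of $\nu\otimes m_x$ coming from detailed balance to reassemble the two pieces into $\nabla v$. You are also right to flag that reversibility, not mere invariance, is the hypothesis actually needed here; the paper uses reversibility as an ambient assumption (all its examples are reversible and it invokes reversibility explicitly elsewhere, e.g.\ in Lemma \ref{ConsMass}), so this is consistent with the framework.
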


For $p>1$, we consider the functional $\mathcal{F}_{p,m} :L^2(X, \nu) \rightarrow (-\infty, +\infty]$ defined by
$$ \mathcal{F}_{p,m}(u):= \left\{\begin{array}{ll}\displaystyle\frac{1}{2p}\int_{X \times X}  |u(y)-u(x)|^p \, d(\nu\otimes m_x)(x,y) \quad &\hbox{if $\nabla u \in L^p(X \times X,\nu\otimes m_x)$;} \\[10pt] +\infty \quad &\hbox{otherwise}. \end{array} \right.$$
Observe that
$$L^p(X, \nu) \cap L^2(X, \nu)\subset D(\mathcal{F}_{p,m}),$$
and, since $\mathcal{F}_{p,m}$ is convex and lower semicontinuous,  the subdifferential $\partial_{L^2(X,\nu)} \mathcal{F}_{p,m}$ is~a~maximal monotone operator with a dense domain.

\begin{definition}\label{dfn:plaplacian}{\rm
We define the {\it $m$-$p$-Laplacian operator} $\Delta_p^m$  in $[X,\mathcal{B},m,\nu]$ as $$(u,v) \in \Delta_p^m \iff u,v\in L^2(X,\nu),\     \nabla u \in L^{p}(X \times X, \nu\otimes m_x) \ \hbox{and}$$
$$v(x)= {\rm div}_m (\vert \nabla u \vert^{p-2} \nabla u)(x) = \int_X \vert \nabla u(x,y) \vert^{p-2} \, \nabla u(x,y) \, dm_x(y).$$
}
\end{definition}

Note that as a consequence of Proposition \ref{FIPartes1}, we have the following integration by~parts formula: for all $u, v \in L^p(X, \nu)$, it holds that
\begin{equation}\label{IntbP1}
\int_X v(x) \Delta_p^m u(x) \, d \nu(x) = - \frac12 \int_{X \times X} \vert \nabla u(x,y) \vert^{p-2} \, \nabla u(x,y) \, \nabla v(x,y) \, d(\nu\otimes m_x)(x,y).
\end{equation}
The following result was given in~\cite{MT1} for weighted graphs;  here, we prove it for general random walk spaces.

\begin{theorem}\label{plaplac01} The operator $-\Delta_p^m$ is completely accretive and
\begin{equation}\label{pSubd}
(u,v) \in \Delta_p^m \iff (u,-v) \in \partial_{L^2(X,\nu)} \mathcal{F}_{p,m}.
\end{equation}
\end{theorem}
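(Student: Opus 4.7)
My plan is to treat the two assertions in sequence, with the subdifferential identification relying on the convexity inequality for $s \mapsto \frac{1}{p}|s|^p$, and the complete accretivity relying on the monotonicity of $t \mapsto |t|^{p-2}t$ combined with that of every $q \in P_0$. In both steps, the workhorse is the integration-by-parts formula \eqref{IntbP1}.

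First, for the subdifferential equivalence \eqref{pSubd}, I will show both implications. For ``$\Rightarrow$'', let $(u,v) \in \Delta_p^m$; in particular $\nabla u \in L^p(X\times X,\nu\otimes m_x)$. The pointwise convexity inequality
\[
\tfrac{1}{p}|b|^p - \tfrac{1}{p}|a|^p \geq |a|^{p-2} a\,(b-a) \qquad (a,b \in \mathbb{R})
\]
applied with $a = \nabla u(x,y)$ and $b = \nabla w(x,y)$, for any $w \in L^2(X,\nu)$ with $\nabla w \in L^p$, gives after integration against $\tfrac{1}{2}\,d(\nu\otimes m_x)$
\[
\mathcal{F}_{p,m}(w) - \mathcal{F}_{p,m}(u) \geq \tfrac{1}{2}\int_{X\times X} |\nabla u|^{p-2}\nabla u \cdot \nabla(w-u) \, d(\nu\otimes m_x).
\]
Using \eqref{IntbP1}, the right-hand side equals $-\int_X v(x)(w-u)(x)\,d\nu(x)$, which is exactly the subdifferential inequality for $-v$ at $u$ (and the case $\mathcal{F}_{p,m}(w)=+\infty$ is trivial). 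For ``$\Leftarrow$'', assume $-v \in \partial_{L^2(X,\nu)}\mathcal{F}_{p,m}(u)$; then $u \in D(\mathcal{F}_{p,m})$ and $v \in L^2$. Testing the subdifferential inequality with $w = u + t\varphi$ for $\varphi \in L^2 \cap L^p$ and $t>0$, dividing by $t$, and sending $t \to 0^+$ (using dominated convergence on the difference quotient of $\tfrac{1}{p}|\nabla u + t\nabla\varphi|^p$, which is monotone in $t$) yields
\[
-\int_X v\,\varphi\, d\nu \leq \tfrac{1}{2}\int_{X\times X}|\nabla u|^{p-2}\nabla u \cdot \nabla\varphi \, d(\nu\otimes m_x) = -\int_X \Delta_p^m u \cdot \varphi \, d\nu;
\]
replacing $\varphi$ by $-\varphi$ gives the reverse inequality, and since the set of admissible $\varphi$ is dense in $L^2(X,\nu)$, we conclude $v = \Delta_p^m u$.

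Second, for complete accretivity, I will invoke Proposition \ref{prop:completely-accretive}. Let $(u_1,v_1),(u_2,v_2) \in -\Delta_p^m$ and $q \in P_0$. Since $q$ is bounded, $q(u_1-u_2) \in L^2(X,\nu)$, so by \eqref{IntbP1},
\[
\int_X q(u_1-u_2)(v_1 - v_2)\, d\nu = \tfrac{1}{2}\int_{X\times X}\bigl(|\nabla u_1|^{p-2}\nabla u_1 - |\nabla u_2|^{p-2}\nabla u_2\bigr) \cdot \nabla(q(u_1-u_2))\, d(\nu\otimes m_x).
\]
For fixed $x,y$, write $a=\nabla u_1(x,y)$, $b=\nabla u_2(x,y)$; then $\nabla(q(u_1-u_2))(x,y) = q((u_1-u_2)(y)) - q((u_1-u_2)(x))$, and since $q$ is nondecreasing this has the same sign as $(u_1-u_2)(y)-(u_1-u_2)(x) = a-b$. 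The strict monotonicity of $t\mapsto |t|^{p-2}t$ ensures $|a|^{p-2}a - |b|^{p-2}b$ also has the sign of $a-b$, so the integrand is pointwise nonnegative, and the integral is nonnegative as required.

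The part I expect to require the most care is the passage to the Gâteaux derivative in the ``$\Leftarrow$'' direction: one needs to know that $\nabla u \in L^p$ together with $\nabla \varphi \in L^p$ is enough to differentiate $\mathcal{F}_{p,m}$ in direction $\varphi$ and identify the derivative via \eqref{IntbP1}, and that $L^2 \cap L^p$ is dense enough in $L^2(X,\nu)$ to conclude $v = \Delta_p^m u$ pointwise $\nu$-a.e. The integrability of $|\nabla u|^{p-2}\nabla u \cdot \nabla\varphi$ follows from Hölder's inequality since the first factor lies in $L^{p'}$ and $\nabla\varphi \in L^p$, making the integration-by-parts step legitimate.
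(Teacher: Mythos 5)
Your proof is correct and follows essentially the same route as the paper: complete accretivity via Proposition \ref{prop:completely-accretive} and the monotonicity of $t\mapsto|t|^{p-2}t$, and the subdifferential identification via the G\^ateaux derivative of $\mathcal{F}_{p,m}$ combined with the integration-by-parts formula. The only cosmetic difference is that you establish the inclusion $-\Delta_p^m\subset\partial_{L^2(X,\nu)}\mathcal{F}_{p,m}$ directly from the pointwise convexity inequality, whereas the paper obtains it abstractly from the maximal monotonicity of $\partial_{L^2(X,\nu)}\mathcal{F}_{p,m}$ once the reverse inclusion and complete accretivity are in hand.
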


\begin{proof}
For every $q\in P_{0}$ and every $(u,-v)$, $(\hat{u},-\hat{v})\in \Delta_p^m$, by the integration by parts formula, we have
\begin{align}
\int_{X}q(u-\hat{u})(v-\hat{v})\, d\nu &= -\int_{X} q(u-\hat{u}) (\Delta_p^m u  -\Delta_p^m \hat{u}) \, d \nu \\
& = - \int_{X} q(u-\hat{u}) \cdot \Delta_p^m u \, d\nu + \int_{X} q(u-\hat{u}) \cdot \Delta_p^m \hat{u} \, d \nu \\
&=  \frac12 \int_{X \times X} \vert \nabla u \vert^{p-2} \, \nabla u \, \nabla q(u-\hat{u}) \, d(\nu \otimes m_x) \\
&\qquad\qquad\qquad -  \frac12 \int_{X \times X} \vert \nabla \hat{u} \vert^{p-2} \, \nabla \hat{u} \, \nabla q(u-\hat{u}) \, d(\nu \otimes m_x) \\
&= \frac12 \int_{X \times X} \left(\vert \nabla u \vert^{p-2} \, \nabla u - \vert \nabla  \hat{u} \vert^{p-2} \, \nabla \hat{u} \right) \nabla q(u-\hat{u}) \, d(\nu \otimes m_x) \geq 0.
\end{align}
Then, by Proposition \ref{prop:completely-accretive}, the operator $-\Delta_p^m$ is completely accretive.

In order to   see   that $-\Delta_p^m = \partial\mathcal{F}_{p,m}$, since $-\Delta_p^m$ is completely accretive, it is enough to  show that $\partial\mathcal{F}_{p,m}\subset-\Delta_p^m$.
To this end, let $(u, v) \in \partial \mathcal{F}_{p,m}$;  hence, it holds that $u\in L^2(X,\nu)$ and $\nabla u \in L^p(X \times X,\nu\otimes m_x)$. Then, for every $w \in  L^1(X, \nu) \cap L^\infty(X, \nu) $ and $t>0$, we have
 $$\frac{\mathcal{F}_{p,m}(u + tw) - \mathcal{F}_{p,m}(u)}{t} \geq \int_X vw \, d\nu.$$
Then, taking limit as $t \to 0^+$, we obtain that
$$\frac12 \int_{X \times X} \vert \nabla u(x,y) \vert^{p-2} \nabla u(x,y) \nabla w(x,y) \, dm_x(y) \, d\nu(x) \geq \int_X vw \, d\nu.$$
Now, since this inequality is also true for $-w$, we have
$$\frac12 \int_{X \times X} \vert \nabla u(x,y) \vert^{p-2} \nabla u(x,y) \nabla w(x,y) \, dm_x(y) \, d\nu(x)= \int_X v w \, d\nu.$$
Then, applying again the integration by parts formula, we get
$$- \int_X \Delta_p^m u (x) \, w(x) \, d\nu(x) = \int_X v w \, d\nu \quad \forall \, w \in    L^1(X, \nu) \cap L^\infty(X, \nu).$$
From here, we deduce that $v = - \Delta_p^m u$, and consequently $(u,-v) \in \Delta_p^m$.
\end{proof}

We define the space of {\it functions of bounded variation} in $[X,\mathcal{B},m,\nu]$ as
$$BV_m(X, \nu):= \left\{ u: X \to \R \ \hbox{measurable}:  \int_{X \times X} \vert \nabla u(x,y) \vert \, dm_x(y) \, d\nu(x) < \infty \right\}.$$
In \cite{MST1} (see also~\cite{MSTBook}) the total variation functional  $\mathcal{F}_{1,m} :L^2(X, \nu) \rightarrow (-\infty, +\infty]$ is  defined by
$$ \mathcal{F}_{1,m}(u):= \left\{\begin{array}{ll}\displaystyle\frac12\int_{X\times X}|u(y)-u(x)| \, d(\nu\otimes m_x)(x,y)\quad &\hbox{if} \ u \in BV_{m}(X, \nu); \\[10pt] +\infty \quad &\hbox{if} \ u \in L^2(X, \nu) \setminus BV_{m}(X, \nu). \end{array} \right.$$
The functional $\mathcal{F}_{1,m}$ can be characterised in a similar way as the total variation in the Euclidean setting. Denote
$$X_m^2(X, \nu):= \bigg\{ \z \in L^\infty(X \times X, \nu \otimes m_x): \ {\rm div}_m \z \in L^2(X, \nu) \bigg\}.$$
Then, we have the following {\it Green's formula} (see \cite[Proposition 3.8]{MSTBook}).

\begin{proposition}\label{Green} Given $v \in  BV_{m}(X, \nu) \cap L^2(X, \nu)$ and $\z\in X_m^2(X, \nu)$, we have
\begin{equation}\label{GreenF}
\int_X v(x) \, {\rm div}_m (\z) (x) \, d \nu(x) = - \frac12 \int_{X \times X} \z(x,y) \, \nabla v(x,y) \, d(\nu\otimes m_x)(x,y).
\end{equation}
\end{proposition}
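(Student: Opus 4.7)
The plan is to reduce the formula to a direct computation exploiting the antisymmetric structure of ${\rm div}_m \z$ and the reversibility of $\nu$ with respect to $m$, which is the standing assumption throughout this framework (as witnessed by all the examples of random walk spaces in this subsection). First I would check that both sides are well defined: the left-hand side is the $L^2(X,\nu)$ pairing of $v$ and ${\rm div}_m \z$, both of which belong to $L^2(X,\nu)$, and the right-hand side pairs $\z \in L^\infty(X \times X, \nu \otimes m_x)$ with $\nabla v \in L^1(X\times X, \nu\otimes m_x)$ (the latter by the $BV_m$ hypothesis on $v$), so it is also finite.

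The core computation starts from the definition of the $m$-divergence,
$$\int_X v(x)\, {\rm div}_m \z(x)\, d\nu(x) = \frac{1}{2}\int_{X\times X} v(x)\,(\z(x,y) - \z(y,x))\, d(\nu\otimes m_x)(x,y),$$
and then applies the detailed balance condition $dm_x(y)\,d\nu(x) = dm_y(x)\,d\nu(y)$ to swap the dummy variables in the term containing $\z(y,x)$, giving
$$\int_{X\times X} v(x)\,\z(y,x)\, d(\nu\otimes m_x)(x,y) = \int_{X\times X} v(y)\,\z(x,y)\, d(\nu\otimes m_x)(x,y).$$
Substituting back yields $\frac{1}{2}\int_{X\times X} \z(x,y)(v(x) - v(y))\, d(\nu\otimes m_x) = -\frac{1}{2}\int_{X\times X} \z\,\nabla v\, d(\nu\otimes m_x)$, which is the required right-hand side. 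This is essentially the same technique that proves Proposition~\ref{FIPartes1}, adapted to the weaker integrability of $\z$ via the stronger integrability of $\nabla v$.

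The main obstacle is that the individual integrals arising after the splitting, such as $\int v(x)\,\z(x,y)\, d(\nu\otimes m_x)$, are absolutely convergent only when $v \in L^1(X,\nu)$, which is not guaranteed by the hypothesis (in typical examples where $\nu$ is $\sigma$-finite but not finite, such as $\mathbb{R}^N$ or an infinite weighted graph, one may well have $v \in L^2 \setminus L^1$). To address this, I would approximate $v$ by $v_{n,k} := T_n(v)\,\chi_{A_k}$, where $T_n$ truncates at level $n$ and $\{A_k\}$ is an increasing exhaustion of $X$ by sets of finite $\nu$-measure. Each $v_{n,k}$ lies in $L^1 \cap L^\infty \cap BV_m(X,\nu)$, so the direct computation above applies to it. One then passes to the limit, letting $k\to\infty$ first and $n\to\infty$ afterwards: the left-hand side converges by dominated convergence $v_{n,k} \to v$ in $L^2(X,\nu)$ paired with the fixed ${\rm div}_m \z \in L^2(X,\nu)$, while the right-hand side converges once one shows $\nabla v_{n,k} \to \nabla v$ in $L^1(X\times X, \nu\otimes m_x)$, which follows from a dominated-convergence argument combining the $1$-Lipschitz property of $T_n$ with control on the $BV_m$-mass of $v$ on the complement of $A_k$.
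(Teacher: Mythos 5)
Your overall strategy (check both sides are finite, do the direct computation via the detailed balance condition for $v\in L^1\cap L^\infty$, then approximate) is the standard one; note the paper itself does not prove this proposition but cites \cite[Proposition 3.8]{MSTBook}, and your reading that reversibility is the operative standing assumption is correct. The well-definedness paragraph and the core computation for integrable $v$ are fine. The gap is in the final approximation step: the claim that $\nabla v_{n,k}\to\nabla v$ in $L^1(X\times X,\nu\otimes m_x)$ for $v_{n,k}=T_n(v)\chi_{A_k}$ does not follow from dominated convergence and is in fact false for a general exhaustion. The sharp cutoff creates nonlocal boundary terms: on pairs $(x,y)$ with exactly one point in $A_k$ one has $|\nabla(w\chi_{A_k})(x,y)|=|w(\cdot)|$ rather than $|\nabla w(x,y)|$, and their total mass is
$\int_{A_k}|w|\,m_x(A_k^c)\,d\nu+\int_{A_k^c}|w|\,m_x(A_k)\,d\nu$,
which is controlled neither by the $BV_m$-mass of $v$ outside $A_k\times A_k$ nor by an integrable dominating function (the natural majorant $|w(x)|+|w(y)|$ has $\nu\otimes m_x$-integral $2\|w\|_{L^1(X,\nu)}$, which is infinite precisely in the case $v\in L^2\setminus L^1$ you are trying to treat). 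Concretely, take $X=\N\times\{0,1\}$ with the single edge $(j,0)\sim(j,1)$ of weight $1$ for each $j$, $v(j,0)=v(j,1)=1/j$ (so $v\in L^2\setminus L^1$ and $\nabla v\equiv 0$ on ${\rm supp}(\nu\otimes m_x)$, hence $v\in BV_m$), and an exhaustion $A_k$ that contains both points of each pair $j\le k$ but only the point $(j,0)$ for $k<j\le f(k)$ with $f(k)$ chosen so that $\sum_{k<j\le f(k)}1/j\ge 1$: then $\Vert\nabla(v\chi_{A_k})\Vert_{L^1}\ge 2$ for every $k$ while $\Vert\nabla v\Vert_{L^1}=0$.

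The fix is to drop the spatial cutoff altogether and use the soft truncation $G_\ve(v):=(|v|-\ve)^+\,{\rm sign}(v)$. Since $v\in L^2(X,\nu)$, the set $\{|v|>\ve\}$ has finite $\nu$-measure, so $G_\ve(v)\in L^1(X,\nu)\cap L^2(X,\nu)$ automatically; since $G_\ve$ is $1$-Lipschitz with $G_\ve(0)=0$, one has $|\nabla G_\ve(v)|\le|\nabla v|$, so $G_\ve(v)\in BV_m(X,\nu)$ and $\nabla G_\ve(v)\to\nabla v$ in $L^1(X\times X,\nu\otimes m_x)$ by genuine dominated convergence, while $G_\ve(v)\to v$ in $L^2(X,\nu)$. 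Applying your direct computation to $G_\ve(v)$ (combined, if you wish, with $T_n$ to get boundedness, though $L^1\cap L^2$ already suffices to justify Fubini and the variable swap against the bounded $\z$) and passing to the limit then closes the argument without any exhaustion of $X$.
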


 As a consequence of Green's formula, we have the following characterisation of the total variation shown in \cite[Proposition 3.9]{MSTBook}.

\begin{proposition}\label{LSC}
Given $u \in  BV_{m}(X, \nu) \cap L^2(X, \nu)$, we have
$$ \mathcal{F}_{1,m}(u) = \sup \left\{ \int_X u(x) ( {\rm div}_m \z)(x) \, d\nu(x): \ \z \in X_m^2(X, \nu), \ \Vert \z \Vert_{L^\infty(X \times X, \nu \otimes m_x)} \leq 1 \right\}.$$
\end{proposition}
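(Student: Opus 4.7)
The goal is to establish two inequalities: the ``easy'' bound where the supremum is dominated by $\mathcal{F}_{1,m}(u)$, and the matching lower bound where we construct an explicit competitor achieving (or approximating) the total variation.

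For the inequality
\[
\sup\Big\{\int_X u(x)\,({\rm div}_m \z)(x)\,d\nu(x) : \z \in X_m^2(X,\nu),\ \|\z\|_\infty \leq 1\Big\} \leq \mathcal{F}_{1,m}(u),
\]
the approach is a direct application of Green's formula (Proposition~\ref{Green}). Given any admissible $\z$, formula \eqref{GreenF} rewrites the left-hand integral as $-\tfrac{1}{2}\int_{X\times X}\z(x,y)\,\nabla u(x,y)\,d(\nu\otimes m_x)$. Bounding the integrand pointwise by $\|\z\|_{L^\infty(X\times X,\nu\otimes m_x)}\,|\nabla u(x,y)| \leq |\nabla u(x,y)|$ yields the desired estimate, since $u\in BV_m(X,\nu)$ guarantees $|\nabla u|$ is integrable against $\nu\otimes m_x$.

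For the reverse inequality, the strategy is to realise (approximately) the supremum with the ``sign'' vector field. Set $\z(x,y):=-\operatorname{sgn}(u(y)-u(x))$, which is antisymmetric (i.e., $\z(y,x)=-\z(x,y)$) and satisfies $\|\z\|_\infty\leq 1$. Formal application of Green's formula gives $\int_X u\,{\rm div}_m\z\,d\nu=\tfrac{1}{2}\int_{X\times X}|u(y)-u(x)|\,d(\nu\otimes m_x)=\mathcal{F}_{1,m}(u)$. Because this candidate $\z$ satisfies only $\|{\rm div}_m\z\|_\infty\leq 1$, which need not lie in $L^2(X,\nu)$ when $\nu$ is merely $\sigma$-finite, one must localise: since $\mathcal{B}$ is countably generated and $\nu$ is $\sigma$-finite, one chooses an increasing sequence $X_n\nearrow X$ with $\nu(X_n)<\infty$, and replaces $\z$ by $\z_n(x,y):=\z(x,y)\,\chi_{X_n\times X_n}(x,y)$. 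Symmetry of $X_n\times X_n$ preserves antisymmetry of $\z_n$, and boundedness together with $\nu(X_n)<\infty$ places ${\rm div}_m\z_n\in L^2(X,\nu)$, so $\z_n\in X_m^2(X,\nu)$. Green's formula then gives
\[
\int_X u(x)\,({\rm div}_m\z_n)(x)\,d\nu(x)=\frac{1}{2}\int_{X_n\times X_n}|u(y)-u(x)|\,d(\nu\otimes m_x)(x,y),
\]
and monotone convergence as $n\to\infty$ drives the right-hand side to $\mathcal{F}_{1,m}(u)$, concluding that the supremum is at least $\mathcal{F}_{1,m}(u)$.

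The main obstacle, and really the only subtle point, is the $L^2$-integrability of ${\rm div}_m\z$: the naive sign choice produces only an $L^\infty$ divergence, which is not admissible in the definition of $X_m^2(X,\nu)$ unless $\nu$ is finite. The truncation argument above resolves this while preserving both the antisymmetry needed for ${\rm div}_m\z_n$ and the pairing $\z_n\,\nabla u=|\nabla u|$ on the relevant set. All other steps (Green's formula, monotone convergence, pointwise bound of the integrand) are routine.
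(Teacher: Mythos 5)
Your proof is correct and follows essentially the same route the paper takes for the analogous two-random-walk statement (Proposition \ref{LSC+2}), whose proof the paper gives in full: the upper bound via Green's formula and the pointwise bound $|\z\,\nabla u|\le|\nabla u|$, and the lower bound via the sign field truncated to an exhaustion $K_n\nearrow X$ by sets of finite $\nu$-measure so that ${\rm div}_m\z_n\in L^2(X,\nu)$, followed by monotone convergence. (For Proposition \ref{LSC} itself the paper simply cites \cite[Proposition 3.9]{MSTBook}, but the argument there is the same.)
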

By the above result, the functional $\mathcal{F}_{1,m}$ is lower semicontinuous with respect to convergence in $L^2(X,\nu)$. Since it is also convex, the subdifferential $\partial_{L^2(X,\nu)} \mathcal{F}_{1,m}$ is a maximal monotone operator with a dense domain, for which the following characterisation holds (see \cite{MST1,MSTBook}).

\begin{proposition}\label{CHarat1} Let $u, v \in L^2(X, \nu)$. The following assertions are equivalent:
\begin{itemize}

\item[(1)] $(u,v) \in \partial_{L^2(X,\nu)} \mathcal{F}_{1,m}$;

\item[(2)] There exists $\z \in X_m^2(X, \nu)$ with $\Vert \z \Vert_{L^\infty(X \times X, \nu \otimes m_x)} \leq 1$ such that
$$v = -  {\rm div}_m \z$$ and
$$\mathcal{F}_{1,m}(u) = \int_X u(x) \, v(x) \, d\nu(x);$$
\item[(3)] There exists an antisymmetric function $\g \in L^\infty(X \times X, \nu \otimes m_x)$ such that
$$\Vert \g \Vert_{L^\infty(X \times X, \nu \otimes m_x)} \leq 1;$$
$$v(x) = - \int_X \g(x,y) \, dm^1_x(y) \quad \hbox{for} \ \ \nu\hbox{-a.e.} \ x \in X;$$
$$\g(x,y) \in {\rm sign}(u(y) - u(x)) \quad \hbox{for} \ \ (\nu \otimes m_x) \hbox{-a.e.} \ (x,y) \in X \times X.$$
\end{itemize}

\end{proposition}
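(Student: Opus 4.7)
The functional $\mathcal{F}_{1,m}$ is convex, lower semicontinuous (Proposition~\ref{LSC}) and positively homogeneous of degree~$1$, so Proposition~\ref{degree1} applies: $v\in\partial_{L^2(X,\nu)}\mathcal{F}_{1,m}(u)$ iff $(v,w)\le\mathcal{F}_{1,m}(w)$ for every $w\in L^2(X,\nu)$ and $(v,u)=\mathcal{F}_{1,m}(u)$. The plan is to match the first condition with the existence of $\z$ as in~$(2)$ by invoking the dual description of $\mathcal{F}_{1,m}$ from Proposition~\ref{LSC}, and then to pass to the antisymmetric vector-field description in~$(3)$, extracting the pointwise sign constraint from the equality $(v,u)=\mathcal{F}_{1,m}(u)$.

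The implication $(2)\Rightarrow(1)$ is immediate: Proposition~\ref{LSC} applied with $-\z$ in place of $\z$ yields $(v,w)\le\mathcal{F}_{1,m}(w)$ for every $w$, and combining with $(v,u)=\mathcal{F}_{1,m}(u)$ gives the subdifferential inequality. For the reverse direction $(1)\Rightarrow(2)$, I would define on the subspace $Y:=\{\nabla w:w\in L^2(X,\nu)\cap BV_m(X,\nu)\}\subset L^1(X\times X,\nu\otimes m_x)$ the linear map $\Lambda(\nabla w):=2(v,w)$. This is well defined since $\nabla w=0$ gives $\mathcal{F}_{1,m}(u+\lambda w)=\mathcal{F}_{1,m}(u)$ for every $\lambda\in\R$, and the subdifferential inequality then forces $(v,w)=0$. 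The bound $|\Lambda(\nabla w)|\le 2\mathcal{F}_{1,m}(w)=\|\nabla w\|_{L^1(\nu\otimes m_x)}$ follows from $(v,\pm w)\le\mathcal{F}_{1,m}(w)$. Hahn--Banach together with the duality $(L^1)^{*}=L^\infty$ extends $\Lambda$ to a pairing with some $\z\in L^\infty(X\times X,\nu\otimes m_x)$ satisfying $\|\z\|_\infty\le 1$. The function $\mathrm{div}_m\z$ is pointwise bounded; for bounded $w$ with $\nu$-support of finite measure, Proposition~\ref{Green} rewrites $2(v,w)=\int \z\,\nabla w\,d(\nu\otimes m_x)$ as $(v,w)=-\int w\,\mathrm{div}_m\z\,d\nu$, and density of such test functions in $L^2(X,\nu)$ upgrades this to $v=-\mathrm{div}_m\z$ in $L^2$, which in turn forces $\z\in X_m^2(X,\nu)$.

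For $(2)\iff(3)$ I would antisymmetrise: set $\g(x,y):=\tfrac12(\z(x,y)-\z(y,x))$. Then $\g$ is antisymmetric with $\|\g\|_\infty\le 1$, and a direct computation from Definition~\ref{nonlocalgraddiv} gives $\mathrm{div}_m\z(x)=\int_X\g(x,y)\,dm_x(y)$, whence $v(x)=-\int_X\g(x,y)\,dm_x(y)$. Using Proposition~\ref{Green} together with the antisymmetry of $\nabla u$, the identity $(v,u)=\mathcal{F}_{1,m}(u)$ becomes
$$\tfrac12\int_{X\times X}\g(x,y)(u(y)-u(x))\,d(\nu\otimes m_x) = \tfrac12\int_{X\times X}|u(y)-u(x)|\,d(\nu\otimes m_x).$$
Since $|\g|\le 1$ implies the pointwise bound $\g(x,y)(u(y)-u(x))\le|u(y)-u(x)|$, equality of the integrals forces $\g(x,y)\in\mathrm{sign}(u(y)-u(x))$ $(\nu\otimes m_x)$-a.e. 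The converse $(3)\Rightarrow(2)$ follows by taking $\z:=\g$ and reading the above computation in reverse.

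The main obstacle is the Hahn--Banach step in $(1)\Rightarrow(2)$: checking that $\Lambda$ descends to the quotient of $Y$ by the kernel of $\nabla$ (addressed through the degree-$1$ invariance of $\mathcal{F}_{1,m}$ under adding functions with vanishing nonlocal gradient) and, after extension, upgrading the extension $\z$ from merely having pointwise bounded divergence to actually lying in the class $X_m^2(X,\nu)$, which relies on density of bounded, compactly $\nu$-supported functions in $L^2(X,\nu)$ and on Green's formula to identify $-\mathrm{div}_m\z$ with $v$ as an element of $L^2(X,\nu)$.
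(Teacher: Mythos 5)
Your argument is correct, but note that the paper does not actually prove Proposition~\ref{CHarat1}: it is quoted from \cite{MST1,MSTBook}. The closest in-paper analogue is Theorem~\ref{0942NEW} (the two-walk $(1,1)$ case), where the vector fields $\z_i$ are produced by a different mechanism: the authors introduce the functionals $\Psi_1,\Psi_2$ defined as infima over divergence representations, show $\widetilde{\mathcal{F}_m}=\Psi_1=\Psi_2$ using Proposition~\ref{degree1} and the identity $\widetilde{\widetilde{\Psi}}=\Psi$, attain the infimum by the direct method, and only then read off the subdifferential. You instead extract $\z$ directly by Hahn--Banach on the subspace $\{\nabla w\}\subset L^1(X\times X,\nu\otimes m_x)$, using $|(v,w)|\le\mathcal{F}_{1,m}(w)=\tfrac12\Vert\nabla w\Vert_{L^1}$ as the dominating sublinear functional; this is more elementary and self-contained, while the $\widetilde{\Psi}$ route scales more cleanly to sums of several $1$-homogeneous functionals (which is why the paper uses it in Theorem~\ref{0942NEW}). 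Your remaining steps --- the antisymmetrisation $\g=\tfrac12(\z(x,y)-\z(y,x))$, the cancellation of the symmetric part against $\nabla u$ (which uses reversibility of $\nu$, implicitly assumed throughout the paper), and the extraction of the pointwise sign condition from the equality case of $\g\,\nabla u\le|\nabla u|$ --- are all sound. Two points deserve slightly more care than you give them: the identification $v=-\mathrm{div}_m\z$ from the dense class of bounded, finite-measure-supported test functions should be phrased via $\int_E|v+\mathrm{div}_m\z|\,d\nu=0$ on finite-measure sets $E$ (since a priori $\mathrm{div}_m\z$ is only in $L^\infty$, the pairing is not $L^2$-continuous when $\nu(X)=\infty$), and the bound $|\z(y,x)|\le1$ a.e.\ used for $\Vert\g\Vert_\infty\le1$ again invokes the flip-invariance of $\nu\otimes m_x$. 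Neither is a gap, just a detail to make explicit.
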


\begin{definition}{\rm We define the {\it  $m$-$1$-Laplacian operator} $\Delta_1^m$  in $[X,\mathcal{B},m,\nu]$ as
\begin{equation}\label{1Subd}
 (u,v) \in \Delta_1^m \iff (u,-v) \in \partial_{L^2(X,\nu)} \mathcal{F}_{1,m}.
 \end{equation}
}
\end{definition}

\begin{remark}\label{dominio01}\rm
In the case that $\nu(X)<\infty$ and $1\le p\le 2$, we have that
$$D(\Delta_p^m)=L^2(X,\nu).$$
For $1<p\le 2$, notice that if $u\in L^2(X,\nu)$, then $|u|^{p-1}\in L^2(X,\nu)$; indeed,  $$\int_X |u|^{2(p-1)}d\nu\le \int_{\{|u|\le 1\}}|u|^{2(p-1)}d\nu+\int_{\{|u|> 1\}}|u|^{2(p-1)}d\nu\le \nu(X)+\int_{\{|u|> 1\}}|u|^2d\nu,$$ where the last inequality  holds since $2(p-1)\le 2$. Therefore,
$$|\nabla u|^{p-1}\in L^2( X \times X, \nu \otimes m_x),$$
and consequently $\Delta_p^m u\in L^2(X,\nu)$. For $p=1$, if $u\in L^2(X,\nu)$, we have that
$$\displaystyle x\mapsto \int_X \hbox{sign}^0(u(y)-u(x)) \, dm_x(y)\in \Delta_1^mu,$$
hence the claim holds for all $1\le p\le 2$. \hfill $\blacksquare$
\end{remark}

\begin{definition}{\rm
For  $\nu(X) < \infty$   and $1\leq r <\infty$,  we say that $\mathcal{F}_{1,m}$ satisfies a  {\it $r$-Poincar\'{e} inequality}  if  there exists a constant $\lambda_r(\mathcal{F}_{1,m}) >0$ such that
\begin{equation}\label{coercDfini1}
\lambda_r(\mathcal{F}_{1,m}) \, \Vert u \Vert_{L^r(X, \nu)} \leq  \mathcal{F}_{1,m}(u) \quad \forall \, u\in \left\{ u \in L^r(X, \nu) \setminus \{ 0 \} : \int_X u \, d\nu =0\ \right\},
\end{equation}
which is equivalent to
\begin{equation}\label{Poincare1p=1}
\lambda_r(\mathcal{F}_{1,m}) \, \Vert u - \overline{u} \Vert_{L^r(X, \nu)} \leq \mathcal{F}_{1,m}(u) \quad \forall \, u \in L^r(X, \nu),
\end{equation}
where
$$\overline{u}:= \frac{1}{\nu(X)} \int_X u \, d\nu.$$
}
\end{definition}

\section{Mixing random walks with different growth functionals}\label{section3}

Let $[X,\mathcal{B},m^i,\nu_i]$, where $i=1,2$, be two random walk spaces. Note that the two random walk structures are defined on the same measurable  space. In this Section, we consider the problem of an evolution which is governed by the sum of two nonlocal $p$-Laplacian type operators which are defined using two different random walks. Throughout this section, we assume that
$$\nu_2 \ll \nu_1$$
and
$$\mu:=\frac{d\nu_2}{d\nu_1}\in L^\infty(X,\nu_1),$$
where $\mu>0$ $\nu_1$-a.e.  Due to these assumptions, we may consider the evolution in a joint Hilbert space, denoted by
$$H:=L^2(X,\nu_1).$$
Note that the aforementioned assumptions are satisfied by finite graphs, as well as some infinite graphs, e.g. $\mathbb{Z}^2$ in the first examples and the random walks of the Examples \ref{example.nonlocalJ} and \ref{example.restriction.to.Omega}.

This Section is divided into three parts. In the first subsection, we prove one of the main results of this Section, which concerns the analysis of the case of the sum of the (nonlocal) $1$-Laplacian operator and a (nonlocal) $p$-Laplacian operator  with $p > 1$. In the second subsection,  we prove another main result concerning the case of two $1$-Laplacian type operators. Finally, in the third subsection we consider the case of a $p$-Laplacian and $q$-Laplacian type operators with $p,q > 1$.

\subsection{ The $(1,p)$-Laplace operator}\label{subsec:1plaplace}

\subsubsection{The gradient flow}

In this subsection, we consider the evolution governed by the sum of the $1$-Laplacian type operator defined on one random walk structure and a $p$-Laplacian type operator (with $p > 1$) defined on a second random walk structure. We consider the functional given in~\eqref{0945} that, with the same previous notation, can be written as
$$\mathcal{F}_{1,m^1}+\mathcal{F}_{p,m^2}$$
where $\mathcal{F}_{1,m^1}: L^2(X,\nu_1)\to (-\infty,+\infty]$ is given by
$$ \mathcal{F}_{1,m^1}(u):=  \left\{\begin{array}{ll}\displaystyle\frac12\int_{X \times X}|u(y)-u(x)| \,  \, d(\nu_1\otimes m^1_x)(x,y)   &\hbox{ if} \ u \in BV_{m^1}(X, \nu_1); \\[10pt] +\infty   &\hbox{ if} \ u \in L^2(X, \nu_1) \setminus BV_{m^1}(X, \nu_1), \end{array} \right.$$
and  $ \mathcal{F}_{p,m^2}:L^2(X,\nu_1)\to (-\infty,+\infty]$ is given by
$$ \mathcal{F}_{p,m^2}(u):= \left\{\begin{array}{ll}\displaystyle\frac{1}{2p}\int_{X\times X}|u(y)-u(x)|^p \,  \, d(\nu_2\otimes m^2_x)(x,y)  &\hbox{ if $|\nabla u|^p \in L^1(X \times X,\nu_2\otimes m_x^2)$;} \\[10pt] +\infty  &\hbox{ otherwise}. \end{array} \right.$$
Observe that
$$L^1(X,\nu_1)\cap L^2(X,\nu_1)\subset \hbox{Dom}(\mathcal{F}_{1,m^1})$$
and
$$L^p(X,\nu_2)\cap L^2(X,\nu_1)\subset \hbox{Dom}(\mathcal{F}_{p,m^2}).$$
  Moreover, both functionals are convex and lower semicontinuous with respect to $H$-convergence. Indeed, the functional $\mathcal{F}_{1,m^1}$ is the $m^1$-total variation; for the functional $\mathcal{F}_{p,m^2}$, notice that if the sequence $u_n$ converges to $u$ in $H$, then also
$$\lim_{n \rightarrow \infty} \int_X |u_n-u|^2 \, d\nu_2 = \lim_{n \rightarrow \infty} \int_X \mu \, |u_n-u|^2 \, d\nu_1 \to 0,$$
and the claim follows from the fact that $\mathcal{F}_{p,m^2}$ is $L^2(X,\nu_2)$-lower semicontinuous. Therefore, the subdifferentials $\partial_H \mathcal{F}_{1,m^1}$, $\partial_H \mathcal{F}_{p,m^2}$, and $\partial_H \left(\mathcal{F}_{1,m^1}+\mathcal{F}_{p,m^2}\right)$ are maximal monotone operators on~$H$.

\begin{theorem}\label{0942}
Assume that
$$\mu:=\frac{d\nu_2}{d\nu_1}\in L^\infty(X,\nu_1),$$
\begin{equation}\label{ASSUMP1}
\hbox{and there exists $c >0$ such that} \ \mu\ge c\quad \nu_1\hbox{-a.e.}
\end{equation}
Suppose that  one of the following conditions holds:
\begin{itemize}
\item[(a)] $\nu_1(X) < \infty$;

\item[(b)] $\nu_1(X) = +\infty$ and $p \geq 2$.
\end{itemize}
Then, we have
\begin{equation}\label{eq:subdifferentialofsum}
\partial_H \left(\mathcal{F}_{1,m^1}+\mathcal{F}_{p,m^2}\right) = -\Delta^{m^1}_1- \mu\Delta^{m^2}_p.
\end{equation}
Moreover, this operator is completely accretive and has a dense domain in $H$.
\end{theorem}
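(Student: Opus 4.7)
Both sides of \eqref{eq:subdifferentialofsum} are operators on $H=L^2(X,\nu_1)$. Since $\partial_H(\mathcal{F}_{1,m^1}+\mathcal{F}_{p,m^2})$ is automatically maximal monotone, the plan is to show that $A:=-\Delta_1^{m^1}-\mu\Delta_p^{m^2}$ is contained in this subdifferential and is itself maximal; equality then follows from maximality, and complete accretivity and density of the domain will be obtained as corollaries.

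\textbf{Easy inclusion and complete accretivity.} The bounds $c\le\mu\le\|\mu\|_\infty$ imply that $L^2(X,\nu_1)$ and $L^2(X,\nu_2)$ coincide as Banach spaces with equivalent norms. Hence, for any $u\in D(\Delta_p^{m^2})$, Theorem \ref{plaplac01} applied to $[X,\mathcal{B},m^2,\nu_2]$ and the change of measure $d\nu_2=\mu\,d\nu_1$ yield
\[
\mathcal{F}_{p,m^2}(w)-\mathcal{F}_{p,m^2}(u)\ge\int_X(-\Delta_p^{m^2}u)(w-u)\,d\nu_2=\int_X(-\mu\Delta_p^{m^2}u)(w-u)\,d\nu_1
\]
for every $w\in H$, so $-\mu\Delta_p^{m^2}u\in\partial_H\mathcal{F}_{p,m^2}(u)$. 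Together with $-\Delta_1^{m^1}u=\partial_H\mathcal{F}_{1,m^1}(u)$ from \eqref{1Subd}, adding subgradient inequalities gives the inclusion $A\subseteq\partial_H(\mathcal{F}_{1,m^1}+\mathcal{F}_{p,m^2})$. Complete accretivity of $A$ reduces, via Proposition \ref{prop:completely-accretive}, to complete accretivity of each summand; for $-\mu\Delta_p^{m^2}$ it follows from the identity
\[
\int_X q(u-\hat u)\,\mu(\Delta_p^{m^2}\hat u-\Delta_p^{m^2}u)\,d\nu_1=\int_X q(u-\hat u)(\Delta_p^{m^2}\hat u-\Delta_p^{m^2}u)\,d\nu_2\ge 0
\]
(Theorem \ref{plaplac01} on $[X,\mathcal{B},m^2,\nu_2]$), while the corresponding property of $-\Delta_1^{m^1}$ is standard.

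\textbf{Range condition via minimisation.} Given $f\in H$, consider the functional
\[
\Phi(u):=\tfrac12\int_X(u-f)^2\,d\nu_1+\mathcal{F}_{1,m^1}(u)+\mathcal{F}_{p,m^2}(u),
\]
which is proper ($\Phi(0)<\infty$), strictly convex, coercive and $H$-lower semicontinuous, hence admits a unique minimiser $u^*\in H$. To derive the Euler-Lagrange equation, I test $\Phi(u^*+t\varphi)\ge\Phi(u^*)$ with $\varphi$ ranging in a dense subclass of $H$: in case (a), bounded functions suffice; in case (b), $\varphi\in L^2\cap L^p$ ensures $\nabla\varphi\in L^p(\nu_2\otimes m_x^2)$ using the $m^2$-invariance of $\nu_2$. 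Dividing by $t>0$ and letting $t\to 0^+$, the quadratic term contributes $(u^*-f,\varphi)$; the Gateaux derivative of $\mathcal{F}_{p,m^2}$ along $\varphi$ is evaluated by dominated convergence, using the elementary bound $\bigl||a+tb|^p-|a|^p\bigr|\le Ct|b|(|a|^{p-1}+t^{p-1}|b|^{p-1})$ whose integrability in case (b) is ensured precisely by $p\ge 2$ and $\nabla u^*\in L^p(\nu_2\otimes m_x^2)$, yielding $-\int_X\mu\Delta_p^{m^2}u^*\cdot\varphi\,d\nu_1$. For $\mathcal{F}_{1,m^1}$, only the one-sided directional derivative $\mathcal{F}_{1,m^1}'(u^*;\varphi)$ is available; the resulting inequality reads
\[
\mathcal{F}_{1,m^1}'(u^*;\varphi)\ge\int_X\bigl(f-u^*+\mu\Delta_p^{m^2}u^*\bigr)\varphi\,d\nu_1\qquad\forall\,\varphi\in L^2\cap L^p.
\]
After a density argument extending this to all $\varphi\in H$, the support-function characterisation of closed convex sets gives $f-u^*+\mu\Delta_p^{m^2}u^*\in\partial_H\mathcal{F}_{1,m^1}(u^*)=-\Delta_1^{m^1}u^*$, i.e.\ $f\in(I+A)u^*$, which verifies the range condition.

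\textbf{Conclusion and main obstacle.} By Minty's theorem, $A$ is maximal monotone; being contained in the maximal monotone $\partial_H(\mathcal{F}_{1,m^1}+\mathcal{F}_{p,m^2})$, the two coincide, proving \eqref{eq:subdifferentialofsum}. Density of the domain follows from Proposition \ref{A.Domain}, once one notes that characteristic functions of $\nu_1$-finite measure sets belong to both $\mathrm{Dom}(\mathcal{F}_{1,m^1})$ and $\mathrm{Dom}(\mathcal{F}_{p,m^2})$ and are dense in $H$. The chief obstacle is the Euler-Lagrange step: because $\mathcal{F}_{1,m^1}$ is not Gateaux-differentiable, one has to pass to one-sided directional derivatives and recover the subgradient inclusion through convex duality, and the splitting of a subgradient of the sum into pieces belonging to the individual subdifferentials cannot be invoked from abstract sum rules such as Moreau-Rockafellar (whose interior-of-domain hypotheses typically fail in this nonlocal setting). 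The hypothesis $\mu\ge c>0$ plays a dual role throughout: it makes the subdifferential of $\mathcal{F}_{p,m^2}$ in $H$ equal to the pointwise multiplication of $-\Delta_p^{m^2}u$ by $\mu$, and together with $p\ge 2$ in case (b) it furnishes the integrability needed to justify the differentiation under the integral sign.
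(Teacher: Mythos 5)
Your overall architecture is a legitimate dual of the paper's: the paper proves the inclusion $\partial_H(\mathcal{F}_{1,m^1}+\mathcal{F}_{p,m^2})\subseteq -\Delta_1^{m^1}-\mu\Delta_p^{m^2}$ and concludes by monotonicity of the right-hand side plus maximality of the subdifferential, whereas you prove the reverse inclusion and aim for maximality of $A:=-\Delta_1^{m^1}-\mu\Delta_p^{m^2}$ via the range condition and Minty. Either shell works, and your easy inclusion, the complete accretivity argument, and the density of the domain are all fine. However, both routes funnel into the same hard step --- splitting a subgradient of the sum into a $\partial_H\mathcal{F}_{1,m^1}$ piece and a $-\mu\Delta_p^{m^2}$ piece --- and there your proposal has a genuine gap.

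The gap is this: after deriving $\mathcal{F}_{1,m^1}'(u^*;\varphi)\ge\int_X g\,\varphi\,d\nu_1$ with $g:=f-u^*+\mu\,\mathrm{div}_{m^2}(|\nabla u^*|^{p-2}\nabla u^*)$ for $\varphi$ in a dense subclass, you invoke ``a density argument'' and the support-function characterisation to conclude $g\in\partial_H\mathcal{F}_{1,m^1}(u^*)$. Both steps require $g\in L^2(X,\nu_1)$: without it you cannot pass to the limit along the dense subclass, $g$ is not an admissible element of the subdifferential in $H$, and $u^*$ does not a priori belong to $D(\Delta_p^{m^2})$ (whose definition demands that the divergence lie in $L^2$). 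A priori one only has $\mathrm{div}_{m^2}(|\nabla u^*|^{p-2}\nabla u^*)\in L^{p'}(X,\nu_2)$, and $L^{p'}\not\subseteq L^2$ in general --- not even in case (a) when $p>2$, since then $p'<2$. This is precisely where hypotheses (a)/(b) enter and where the paper does its real work in Steps 2--3: testing the inequality with $w=\mathbf{1}_{A_\varepsilon}$ and using $\mathcal{F}_{1,m^1}(w)\le\|w\|_{L^1(X,\nu_1)}$ to obtain $\|g\|_{L^\infty}\le 1$, which settles case (a); and in case (b) splitting $|\nabla u^*|^{p-1}\le|\nabla u^*|\mathbf{1}_{\{|\nabla u^*|\le1\}}+|\nabla u^*|^{p}\mathbf{1}_{\{|\nabla u^*|\ge1\}}$ (valid because $p\ge2$) to place the divergence term in $L^2+L^1$, whence $g\in(L^1(X,\nu_1)+L^2(X,\nu_1))\cap L^\infty(X,\nu_1)\subseteq L^2(X,\nu_1)$. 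Relatedly, you misattribute the role of $p\ge2$: the dominated-convergence justification of the G\^ateaux derivative of $\mathcal{F}_{p,m^2}$ works for every $p>1$ once $\nabla u^*,\nabla\varphi\in L^p(X\times X,\nu_2\otimes m^2_x)$ (by H\"older); the hypothesis $p\ge2$ is needed only for the $L^2$-membership just described. Supplying the $L^\infty$ bound and the $(L^1+L^2)\cap L^\infty$ decomposition would close your argument.
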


\begin{proof}
 Define the operator $A\subset L^2(X,\nu_1)\times L^2(X,\nu_1)$ as follows:
$$ (u,v)\in A \quad \Leftrightarrow \quad u,v\in L^2(X,\nu_1) \, \mbox{ and } \, v \in -\Delta^{m^1}_1u- \mu\Delta^{m^2}_pu.$$

{\bf \flushleft Step 1.} We first show that this operator is completely accretive in $L^2(X,\nu_1)$. For every $q\in P_{0}$ and every $(u,v)$, $(\hat{u},\hat{v})\in A$, we have that $v = v_1 + v_2$ and $\hat{v} = \hat{v}_1 + \hat{v}_2$, where $v_1 \in -\Delta_1^{m^1} u$, $\hat{v}_1 \in -\Delta_1^{m^1} \hat{u}$, $v_2 \in - \mu \Delta_p^{m^2} u$ and $\hat{v}_2 \in - \mu \Delta_p^{m^2} \hat{u}$. Then,
\begin{align}
\int_{X} q(u-\hat{u})&(v-\hat{v})\, d\nu_1 \\
&=\int_{X} q(u-\hat{u})(v_1 + v_2 -\hat{v}_1 - \hat{v}_2)\, d\nu_1 \\
&= -\int_{ X} q(u-\hat{u}) \bigg( \int_X \g \, dm_x^1 - \int_X \hat{\g} \, dm_x^1 + \mu \Delta_p^{m^2} u  - \mu \Delta_p^{m^2} \hat{u} \bigg) \, d \nu_1 \\
&= -\int_{ X} q(u-\hat{u}) \bigg( \int_X \g \, dm_x^1 - \int_X \hat{\g} \, dm_x^1 \bigg) \, d\nu_1 \\
&\qquad\qquad\qquad\qquad\qquad\qquad\qquad - \int_X q(u - \hat{u}) (\Delta_p^{m^2} u - \Delta_p^{m^2} \hat{u}) \, d \nu_2 \geq 0.
\end{align}
Here, $\g$ and $\hat{\g}$ are the antisymmetric functions associated to $v_1$ and $\hat{v}_1$ respectively through Proposition \ref{CHarat1}. The final inequality is a consequence of complete accretivity of the operators $-\Delta_1^{m^1}$ and $-\Delta_p^{m^2}$, or in other words, the first term can be estimated as in \cite[Proposition 3.13]{MSTBook} and the second term can be estimated as in Theorem \ref{plaplac01}. Thus, by Proposition \ref{prop:completely-accretive}, the operator  $-\Delta_1^{m^1} -\Delta_p^{m^2}$ is completely accretive.

Then, to get~\eqref{eq:subdifferentialofsum} we only need to prove
$$  \partial_H  (\mathcal{F}_{1,m^1} +  \mathcal{F}_{p,m^2})\subset A.$$
This is done in   {\bf Steps 2, 3} and {\bf 4}.

{\flushleft  \bf Step 2.} Let us see that if $v\in \partial_H \left( \mathcal{F}_{1,m^1}+\mathcal{F}_{p,m^2}\right)(u)$, then
$$   \eta := \,  \hbox{$v+\mu{\rm div}_{m^2} (\vert \nabla u \vert^{p-2} \nabla u) \in L^\infty(X,\nu_1)$  with $\| \eta \|_\infty \leq 1$.}$$
Fix $t \in \mathbb{R}$. Then, for any
$$w\in  \hbox{Dom}(\mathcal{F}_{1,m^1})\cap\hbox{Dom}(\mathcal{F}_{p,m^2})$$
by definition of the subdifferential it holds that
\begin{equation}\label{1221}
\mathcal{F}_{1,m^1}(u+tw)-\mathcal{F}_{1,m^1}(u) +    \mathcal{F}_{p,m^2}(u+tw)-\mathcal{F}_{p,m^2}(u)\ge t\int_X vw \, d\nu_1.
\end{equation}
Therefore, for $t>0$ we have
\begin{align}
\frac12\int_X\int_X &\frac{|\nabla (u+tw)(x,y)|-|\nabla u(x,y)|}{t} \, dm^1_x(y) \, d\nu_1(x) \\
&+ \frac{1}{2p}\int_X\int_X\frac{|\nabla (u+tw)(x,y)|^{p}-|\nabla u(x,y)|^p}{t} \, dm^2_x(y) \, d\nu_2(x) \geq \int_X v w \, d\nu_1.
\end{align}
Taking the limit in the above expression as $t\to 0^+$, using the dominated convergence theorem (with $\left|\frac{|\nabla (u+tw)|-|\nabla u|}{t}\right|\le |\nabla w|$) we obtain
\begin{align}
\frac12 &\iint_{\{(x,y): u(y)-u(x)\neq0\}} \hbox{sign}^0(u(y)-u(x)) \, \nabla w(x,y) \, dm^1_x(y) \, d\nu_1(x) \\
& \qquad\qquad +\frac12\iint_{\{(x,y): u(y)-u(x)=0\}}|\nabla w(x,y)| \, dm^1_x(y) \, d\nu_1(x)\label{1941} \\
& \qquad\qquad + \frac{1}{2}\int_X\int_X \vert \nabla u(x,y)\vert^{p-2} \, \nabla u(x,y) \, \nabla w(x,y) \, dm^2_x(y) \, d\nu_2(x) \geq \int_X vw \, d\nu_1.
\end{align}
In the same way, with $t<0$ in~\eqref{1221} (or taking $-w$ in the above inequality), we get
\begin{align}
\frac12 &\iint_{\{(x,y): u(y)-u(x)\neq0\}} \hbox{sign}^0(u(y)-u(x)) \, \nabla w(x,y) \, dm^1_x(y) \, d\nu_1(x) \\
& \qquad\qquad -\frac12\iint_{\{(x,y): u(y)-u(x)=0\}}|\nabla w(x,y)| \, dm^1_x(y) \, d\nu_1(x) \label{19412}\\
& \qquad\qquad + \frac{1}{2}\int_X\int_X \vert \nabla u(x,y)\vert^{p-2} \, \nabla u(x,y) \, \nabla w(x,y) \, dm^2_x(y) \, d\nu_2(x) \le \int_X vw \, d\nu_1.
\end{align}
After integration by parts, \eqref{1941} may be written as
\begin{align}
\frac12 &\iint_{\{(x,y): u(y)-u(x)\neq0\}} \hbox{sign}^0(u(y)-u(x)) \, \nabla w(x,y) \, dm^1_x(y) \, d\nu_1(x) \\
& \qquad\qquad +\frac12\iint_{\{(x,y): u(y)-u(x)=0\}}|\nabla w(x,y)| \, dm^1_x(y) \, d\nu_1(x) \\
& \qquad\qquad\qquad\qquad\qquad\qquad - \int_X  {\rm div}_{m^2} (\vert \nabla u \vert^{p-2} \nabla u) \, w \, d\nu_2 \ge \int_X vw \, d\nu_1.
\end{align}
 Observe that  ${\rm div}_{m^2} (\vert \nabla u \vert^{p-2} \nabla u)\in L^{p'}(X,\nu_2)$. Consequently
\begin{align}
\frac12 &\iint_{\{(x,y): u(y)-u(x)\neq0\}} \hbox{sign}^0(u(y)-u(x)) \, \nabla w(x,y) \, dm^1_x(y) \, d\nu_1(x) \\
& \qquad\qquad +\frac12\iint_{\{(x,y): u(y)-u(x)=0\}}|\nabla w(x,y)| \, dm^1_x(y) \, d\nu_1(x) \\
& \qquad\qquad\qquad\qquad\qquad\qquad - \int_X \, \mu(x) \,{\rm div}_{m^2} (\vert \nabla u \vert^{p-2} \nabla u) \, w \, d\nu_1 \ge \int_X vw \, d\nu_1.
\end{align}
 We reorganise the terms in the above equation to obtain
\begin{align}\label{1239}
\int_X &(v+\mu{\rm div}_{m^2} (\vert \nabla u \vert^{p-2} \nabla u)\, w \, d\nu_1 \\
&\le  \frac12 \iint_{\{(x,y): u(y)-u(x)\neq0\}} \hbox{sign}^0(u(y)-u(x)) \, \nabla w(x,y) \, dm^1_x(y) \, d\nu_1(x) \\
&\quad\quad\quad\quad\quad\quad + \frac12 \iint_{\{(x,y): u(y)-u(x)=0\}} |\nabla w(x,y)| \, dm^1_x(y) \, d\nu_1(x).
\end{align}
Similarly, from~\eqref{19412} we get
\begin{align}\label{12392}
\int_X &(v+\mu{\rm div}_{m^2} (\vert \nabla u \vert^{p-2} \nabla u) \, w \, d\nu_1 \\
&\ge  \frac12 \iint_{\{(x,y): u(y)-u(x)\neq0\}} \hbox{sign}^0(u(y)-u(x)) \, \nabla w(x,y) \, dm^1_x(y) \, d\nu_1(x) \\
&\quad\quad\quad\quad\quad\quad - \frac12 \iint_{\{(x,y): u(y)-u(x)=0\}} |\nabla w(x,y)| \, dm^1_x(y) \, d\nu_1(x).
\end{align}
Now we easily conclude the proof of the claim \eqref{eq:subdifferentialofsum}. By taking $w=u$ in~\eqref{1239} and~\eqref{12392} we get
\begin{equation}\label{12551}
\int_X \left(v + \mu{\rm div}_{m^2} (\vert \nabla u \vert^{p-2} \nabla u)\right) \, u \, d\nu_1 = \mathcal{F}_{1,m^1}(u).
\end{equation}
On the other hand,
it follows from~\eqref{1239} that
\begin{equation}\label{1239r}\begin{array}{c}\displaystyle
 \int_X \left(v + \mu{\rm div}_{m^2} (\vert \nabla u \vert^{p-2} \nabla u)\right) \, w \, d\nu_1 \le \mathcal{F}_{1,m^1}(w)
\end{array}
\end{equation}
for any
$w\in  \hbox{Dom}(\mathcal{F}_{1,m^1})\cap\hbox{Dom}(\mathcal{F}_{p,m^2})$.

Let us see that $\eta:=v+\mu{\rm div}_{m^2} (\vert \nabla u \vert^{p-2} \nabla u) \in L^\infty(X,\nu_1)$  with $\| \eta \|_\infty \leq 1$. Indeed, take any $M > 1$ and suppose that there exists a $\nu_1$-measurable set $A_\varepsilon$ with $\nu(A_\varepsilon) \geq \varepsilon > 0$ such that $\eta \geq M$ on $A_\varepsilon$ (the case $\eta \leq -M$ is handled similarly). Then, taking $w = \1_{A_\varepsilon}$,
\begin{equation}\label{eq:estimateforeta}
\int_X \eta w \, d\nu_1 = \int_{A_\varepsilon} \eta \, d\nu_1 \geq M \nu_1(A_\varepsilon),
\end{equation}
yet estimating $\mathcal{F}_{1,m^1}(w)$ from above yields
\begin{equation}
\mathcal{F}_{1,m^1}(w) \leq \| w \|_{L^1(X,\nu_1)} = \nu_1(A_\varepsilon),
\end{equation}
and consequently estimate \eqref{eq:estimateforeta} may hold only if $0 \leq M \leq 1$. Therefore, we have that $\eta\in L^\infty(X,\nu_1)$ with $\| \eta \|_\infty \leq 1$.

{\flushleft \bf  Step 3.} We now show that in the cases when $\nu_1(X) < \infty$ or  $p \geq 2$ we have that
$$\eta=v+\mu{\rm div}_{m^2} (\vert \nabla u \vert^{p-2} \nabla u)\in L^2(X,\nu_1); $$
from this, in the next Step we will deduce that the equality \eqref{eq:subdifferentialofsum} holds true.  We split the proof into two parts corresponding to the assumptions  (a)-(b) in the statement of the Theorem.

{\flushleft (a)} First, observe that if $\nu_1(X)<+\infty$, we immediately obtain that $\eta \in L^2(X,\nu_1)$ as a consequence of the $L^\infty$ estimate in the previous Step (moreover, in this case all the above is not necessary for $p\le 2$, see Remark~\ref{lo001}).

{\flushleft (b)} Suppose now that $p\geq 2$; in this case, since $|\nabla u|^{p-1} \leq |\nabla u|$ for $|\nabla u| \leq 1$, we have
\begin{align}
|\eta(x)| &\leq |v(x)| + \mu(x) \bigg| \int_{X} |\nabla u(x,y)|^{p-2} \nabla u(x,y) \, dm^2_x(y) \bigg|
\\
&\le |v(x)| + \mu(x)\int_{\{y:|\nabla u(x,y)|\le 1\}} |\nabla u(x,y)|^{p-1} \, dm^2_x(y) \\ &\qquad\qquad\qquad\qquad\qquad\qquad\qquad +\mu(x) \int_{\{y:|\nabla u(x,y)|\ge 1\}}|\nabla u(x,y)|^{p-1} \, dm^2_x(y)
\\
&\le |v(x)| + \mu(x)\int_{\{y:|\nabla u(x,y)|\le 1\}} |\nabla u(x,y)| \, dm^2_x(y) \\ &\qquad\qquad\qquad\qquad\qquad\qquad\qquad +\mu(x) \int_{\{y:|\nabla u(x,y)|\ge 1\}}|\nabla u(x,y)|^{p-1} \, dm^2_x(y)
\\
&\le |v(x)|+ \underbrace{\mu(x) \int_{X}|\nabla u(x,y)| \, dm^2_x(y)}_{\rm I(x)} +  \underbrace{ \mu(x) \int_{X} |\nabla u(x,y)|^p \, dm^2_x(y)}_{\rm II(x)}.
\end{align}
By assumption, $v \in L^2(X,\nu_1)$. We will now show that the term ${\rm I}$ also lies in $L^2(X,\nu_1)$. Indeed, by the Jensen inequality,
\begin{align}
\int_X {\rm I(x)}^2 \, d\nu_1(x) &= \int_X \bigg(\mu(x)\int_{X}|\nabla u(x,y)| \,dm^2_x(y) \bigg)^2 d\nu_1(x) \\
&\leq \Vert \mu \Vert_\infty \int_X \mu(x) \int_{X}|\nabla u(x,y)|^2 \, dm^2_x(y) \, d\nu_1(x) \\
&= \Vert \mu \Vert_\infty \int_X \int_{X} |\nabla u(x,y)|^2 \, dm^2_x(y) \, d\nu_2(x),
\end{align}
which is finite since $u \in L^2(X, \nu_2)$.

Then, we show that the term ${\rm II}$ lies in $L^1(X,\nu_1)$. Indeed,
\begin{align}
\int_X {\rm II(x)} \, d\nu_1(x) &= \int_X \mu(x) \int_{X} |\nabla u(x,y)|^p \, dm^2_x(y) \, d\nu_1(x) \\
&= \int_X \int_{X} |\nabla u(x,y)|^p \, dm^2_x(y) \, d\nu_2(x),
\end{align}
which is finite since $u$ lies in the domain of $\mathcal{F}_{1,m^1} + \mathcal{F}_{p,m^2}$. Then, we arrive to
$$\eta=v+\mu{\rm div}_{m^2} (\vert \nabla u \vert^{p-2} \nabla u)\in \left(L^2(X,\nu_1)+L^1(X,\nu_1)\right)\cap L^\infty(X,\nu_1).$$
It is easy to see that
$$\left(L^2(X,\nu_1)+L^1(X,\nu_1)\right)\cap L^\infty(X,\nu_1)\subset L^2(X,\nu_1),$$
so we conclude that
\begin{equation}\label{hard02}\eta=v+\mu{\rm div}_{m^2} (\vert \nabla u \vert^{p-2} \nabla u)\in L^2(X,\nu_1).
\end{equation}

{\bf \flushleft Step 4.} Now we finally prove~\eqref{eq:subdifferentialofsum}. As shown in the previous step, in both of the cases $\nu_1(X) < \infty$ or  $p \geq 2$, we have that $\eta \in L^2(X,\nu_1)$, and also
\begin{equation}\label{hard01}{\rm div}_{m^2} (\vert \nabla u \vert^{p-2} \nabla u)\in L^2(X,\nu_2),
\end{equation}
which imply that
\begin{equation}
{\rm div}_{m^2} (\vert \nabla u \vert^{p-2} \nabla u)=\Delta_p^{m^2}u .
\end{equation}
Now, by a density argument we can pass from~\eqref{1239r} to
\begin{equation}\label{1239rbbb}
 \int_X (v+\mu\Delta_p^{m^2}u) \, w \, d\nu_1\le \mathcal{F}_{1,m^1}(w) \quad \forall w\in  \hbox{Dom}(\mathcal{F}_{1,m^1}).
\end{equation}
Indeed,   we have that $v+\mu\Delta_p^{m^2}u\in L^2(X,\nu_1)$,   then, for $w\in L^2(X,\nu_1)$, take $w_n\in L^2(X,\nu_1)\cap L^1(X,\nu_1)$ such that $w_n\to w$ in  $L^2(X,\nu_1)$. Then, from~\eqref{1239r} 
$$\begin{array}{c}\displaystyle
 \int_X \left(v + \mu{\rm div}_{m^2} (\vert \nabla u \vert^{p-2} \nabla u)\right) \, w_n \, d\nu_1 \le \mathcal{F}_{1,m^1}(w_n),
\end{array}
$$
and we can pass to the limit to get~\eqref{1239rbbb}.

 Then, by~\eqref{12551} and~\eqref{1239rbbb} we have that
$$\int_X (v+\mu\Delta_p^{m^2}u) (w-u) \, d\nu_1\le \mathcal{F}_{1,m^1}(w)-\mathcal{F}_{1,m^1}(u),$$
for any
$w\in  \hbox{Dom}(\mathcal{F}_{1,m^1})$,
which means that
$$v+\mu\Delta_p^{m^2}u\in\partial_H \mathcal{F}_{1,m^1}(u).$$
Consequently,
$$v\in-\Delta_1^{m^1}u-\mu\Delta_p^{m^2}u,$$
so the equality \eqref{eq:subdifferentialofsum} holds.

{\flushleft \bf  Step 5.}   It remains to show that the domain of $\partial_H \left(\mathcal{F}_{1,m^1}+\mathcal{F}_{p,m^2}\right)$ is dense in $H$. By Proposition \ref{A.Domain}, we have
\begin{align}
L^1(X,\nu_1)\cap L^p(X,\nu_2) \cap H &\subset D(\partial_H \left(\mathcal{F}_{1,m^1}+\mathcal{F}_{p,m^2}\right)) \subset D(\mathcal{F}_{1,m^1}+\mathcal{F}_{p,m^2}) \\
&\subset \overline{ D( \mathcal{F}_{1,m^1}+\mathcal{F}_{p,m^2})}^{H}  = \overline{ D( \partial_H \left(\mathcal{F}_{1,m^1}+\mathcal{F}_{p,m^2}\right)) }^{H},\end{align}
from where it follows that $D(\partial_H \left(\mathcal{F}_{1,m^1}+\mathcal{F}_{p,m^2}\right)) $ is dense in $H$.
\end{proof}

By Brezis-K\={o}mura's Theorem (Theorem \ref{BKTheorem}) and having in mind that the operator is completely accretive, as a consequence of the above theorem, we have the following existence and uniqueness result.

\begin{theorem}\label{0942cor}
 Suppose that the assumptions of Theorem \ref{0942} hold. Let~$T > 0$. For any $u_0\in L^2(X,\nu_1)$ and $f\in L^2(0,T;L^2(X,\nu_1))$, the following problem has a unique strong solution $u=u_{u_0,f}$:
\begin{equation}\left\{\begin{array}{ll}\label{eq:Cauchy1p}
u_t-\Delta^{m^1}_1u- \mu\Delta^{m^2}_pu\ni f&\hbox{on } [0,T]
\\[6pt]
u(0)=u_0.
\end{array}
\right.
\end{equation}
Moreover, we have the following comparison and contraction principle. If $u_0,\widetilde u_0\in L^2(X,\nu_1)$ and $f,\widetilde f\in L^2(0,T;L^2(X,\nu_1))$, if $u=u_{u_0,f}$ and $\tilde u=u_{\widetilde u_0,\widetilde f}$, we have that for any $0\le t\le T$ and any $1 \leq  q \leq \infty$
\begin{equation}\label{1416}\Big\Vert \left(u(t)-\widetilde u(t)\right)^+\Big\Vert_{L^q(X,\nu_1)}\le \Big\Vert \left(u_0-\widetilde u_0\right)^+\Big\Vert_{L^q(X,\nu_1)}
+\int_0^t \Big\Vert \left(f(s)-\widetilde f(s)\right)^+\Big\Vert_{L^q(X,\nu_1)} ds.
\end{equation}
\end{theorem}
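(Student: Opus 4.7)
The plan is to deduce both assertions from Theorem \ref{0942}: existence and uniqueness from the Brezis-K\={o}mura theorem, and the $L^q$ contraction estimate from the complete accretivity of $-\Delta_1^{m^1}-\mu\Delta_p^{m^2}$.

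For existence and uniqueness, set $\mathcal{F}:=\mathcal{F}_{1,m^1}+\mathcal{F}_{p,m^2}$. Theorem \ref{0942} supplies all the hypotheses of Theorem \ref{BKTheorem}: $\mathcal{F}$ is proper, convex and lower semicontinuous on $H=L^2(X,\nu_1)$, and $D(\partial_H \mathcal{F})$ is dense in $H$, so every $u_0\in H$ lies in $\overline{D(\partial_H \mathcal{F})}$. Brezis-K\={o}mura then yields a unique strong solution $u_{u_0,f}$ of \eqref{eq:Cauchy1p}.

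For the contraction estimate \eqref{1416}, I would set $w(t):=u(t)-\widetilde u(t)$ and $F(t):=f(t)-\widetilde f(t)$ and pick measurable sections $v(t),\widetilde v(t)$ of $-\Delta_1^{m^1}-\mu\Delta_p^{m^2}$ at $u(t),\widetilde u(t)$ so that $w'(t)=F(t)-(v(t)-\widetilde v(t))$ for a.e.\ $t$. Complete accretivity (Theorem \ref{0942}) and Proposition \ref{prop:completely-accretive} then give
\begin{equation*}
\int_X \hat q(w(t))\,\bigl(v(t)-\widetilde v(t)\bigr)\,d\nu_1 \ge 0 \quad \text{for every } \hat q\in P_0 \text{ and a.e. } t\in(0,T).
\end{equation*}
For $1<q<\infty$ I would pick $\hat q_\varepsilon\in P_0$ approximating $r\mapsto((r)^+)^{q-1}$ monotonically, and for $q=1$ I would take $\hat q_\varepsilon\nearrow \mathrm{sign}^+$; letting $\varepsilon\to 0$ in the above inequality and combining with the chain-rule identity
\begin{equation*}
\tfrac{d}{dt}\Vert w(t)^+\Vert_{L^q(X,\nu_1)}^q = q\int_X \bigl((w(t))^+\bigr)^{q-1} w'(t)\,d\nu_1
\end{equation*}
together with H\"older's inequality yields $\tfrac{d}{dt}\Vert w(t)^+\Vert_{L^q(X,\nu_1)} \le \Vert F(t)^+\Vert_{L^q(X,\nu_1)}$ almost everywhere, and integration in time produces \eqref{1416}. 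For $q=\infty$ I would instead fix $M>\Vert (u_0-\widetilde u_0)^+\Vert_\infty+\int_0^t\Vert (f-\widetilde f)^+\Vert_\infty\,ds$, choose $\hat q_\varepsilon\in P_0$ approximating $r\mapsto\mathrm{sign}^+(r-M)$, and run the same computation to conclude that the set $\{w(t)>M\}$ has $\nu_1$-measure zero.

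The main technical obstacle is the rigorous chain rule for $t\mapsto\Vert w(t)^+\Vert_{L^q(X,\nu_1)}^q$, since $w\in W_{\rm loc}^{1,2}(0,T;H)$ is not a priori more regular as an $L^q$-valued map. The standard remedy, going back to B\'enilan-Crandall \cite{BCr2}, is to first differentiate $t\mapsto\int_X Q_\varepsilon(w(t))\,d\nu_1$ for a smooth convex $Q_\varepsilon$ with bounded Lipschitz derivative $Q_\varepsilon'\in P_0$ (where the Bochner chain rule in $H$ is immediate) and then pass to the limit $\varepsilon\to 0$ by monotone and dominated convergence; the hypothesis $\mu\in L^\infty(X,\nu_1)$ ensures that all the nonlinear integrals appearing along the way are well-defined.
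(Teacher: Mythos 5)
Your proposal is correct and follows essentially the same route as the paper, which derives the result in one sentence from Theorem \ref{0942} by invoking the Brezis--K\={o}mura theorem for existence and uniqueness and the complete accretivity of $-\Delta_1^{m^1}-\mu\Delta_p^{m^2}$ for the comparison and contraction principle \eqref{1416}. The additional detail you supply (the approximation by test functions in $P_0$ and the regularised chain rule) is exactly the standard B\'enilan--Crandall machinery that the paper leaves implicit by citing \cite{BCr2}.
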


\subsubsection{Asymptotic behaviour}\label{firsab01}
 Our next goal is to study the asymptotic behaviour of the unique solution to the Cauchy problem
\begin{equation}\label{Cauchy1p}
\left\{\begin{array}{ll}
u_t-\Delta^{m^1}_1u- \mu\Delta^{m^2}_pu\ni 0&\hbox{on } [0,T],
\\[6pt]
u(0)=u_0
\end{array}
\right.
\end{equation}
 under the assumptions of Theorem \ref{0942}.
First, let us see that if the measure of the space is finite, the mass of the solution is conserved.

\begin{lemma}\label{ConsMass}
Assume that  $\nu_1(X) < \infty$. Then, if $u(t)$ is the solution of problem \eqref{Cauchy1p}, we have
$$\int_X  u(t)(x) \, d \nu_1(x) = \int_X u_0 \, d \nu_1(x)\quad \hbox{for all } \ t \geq 0.$$
\end{lemma}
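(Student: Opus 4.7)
The plan is to test the evolution equation against the constant function $\mathbf{1}_X$ and use the two integration-by-parts formulas (Proposition \ref{Green} and \eqref{IntbP1}) to show that both operator terms integrate to zero. Note that $\mathbf{1}_X \in L^2(X, \nu_1) \cap BV_{m^1}(X,\nu_1)$ thanks to the assumption $\nu_1(X) < \infty$, so it is admissible as a test function in both formulas, and $\nabla \mathbf{1}_X \equiv 0$.

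First, since $u$ is a strong solution in the sense of Definition \ref{StronSol}, we have $u \in W^{1,2}_{\mathrm{loc}}(0,T;H)$. This allows us to justify differentiation under the integral:
$$\frac{d}{dt} \int_X u(t) \, d\nu_1 = \int_X u_t(t) \, d\nu_1 \qquad \text{for a.e.~} t \in (0,T).$$
By the equation, for a.e.~$t$ we have $u_t(t) = v_1(t) + \mu \, \Delta^{m^2}_p u(t)$ for some selection $v_1(t) \in \Delta^{m^1}_1 u(t)$, and by Theorem \ref{0942} (together with Remark \ref{dominio01} and the $L^\infty$ bound on the multiplier $\mu$), both terms belong to $L^2(X,\nu_1)$.

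Next, by Proposition \ref{CHarat1} applied to $-v_1 \in \partial_H \mathcal{F}_{1,m^1}(u)$, there exists an antisymmetric $\mathbf{g} \in L^\infty(X \times X, \nu_1 \otimes m^1_x)$ with $\|\mathbf{g}\|_\infty \leq 1$ such that $v_1(x) = \mathrm{div}_{m^1} \mathbf{g}(x)$ (using antisymmetry in the definition of the $m$-divergence). Applying Green's formula (Proposition \ref{Green}) with this $\mathbf{g}$ and the test function $\mathbf{1}_X$ yields
$$\int_X v_1 \, d\nu_1 \;=\; \int_X \mathbf{1}_X(x) \, \mathrm{div}_{m^1} \mathbf{g}(x) \, d\nu_1(x) \;=\; -\frac{1}{2} \int_{X \times X} \mathbf{g}(x,y) \, \nabla \mathbf{1}_X(x,y) \, d(\nu_1 \otimes m^1_x) \;=\; 0.$$
Similarly, using $d\nu_2 = \mu \, d\nu_1$ and the integration-by-parts formula \eqref{IntbP1} for the $p$-Laplacian on $[X,\mathcal{B},m^2,\nu_2]$ with test function $\mathbf{1}_X$,
$$\int_X \mu \, \Delta^{m^2}_p u \, d\nu_1 \;=\; \int_X \mathbf{1}_X(x) \, \Delta^{m^2}_p u(x) \, d\nu_2(x) \;=\; -\frac{1}{2} \int_{X \times X} |\nabla u|^{p-2} \nabla u \cdot \nabla \mathbf{1}_X \, d(\nu_2 \otimes m^2_x) \;=\; 0.$$

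Combining the three displays above gives $\frac{d}{dt} \int_X u(t) \, d\nu_1 = 0$ for a.e.~$t \in (0,T)$, and integrating in time yields the claimed conservation of mass. The only mild subtlety, beyond checking admissibility in the two integration-by-parts formulas, is keeping track of the fact that $\Delta^{m^1}_1$ is multivalued; but since we only need the statement for some selection, any measurable selection coming from the strong solution works.
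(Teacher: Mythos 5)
Your proof is correct and follows essentially the same route as the paper: the paper vanishes the two integrals directly by antisymmetry of $\g_t$ (resp.\ of $|\nabla u|^{p-2}\nabla u$) together with reversibility, which is exactly the content of Green's formula tested against $\mathbf{1}_X$ with $\nabla \mathbf{1}_X \equiv 0$. The only difference is packaging, so nothing further is needed.
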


\begin{proof}
 By the definition of solution, writing explicitly the term involving $\Delta^{m^1}_1$, there exists $\g_t \in L^\infty(X \times X, \nu_1 \otimes m^1_x)$ antisymmetric such that
$$\Vert \g_t \Vert_{L^\infty(X \times X, \nu_1 \otimes m^1_x)} \leq 1$$
 and
$$\frac{d}{dt}u(t)(x)  = \int_X \g_t(x,y) \, dm^1_x(y)+ \mu(x) \cdot \Delta^{m^2}_pu(t)(x).$$
Hence,
\begin{align}
\frac{d}{dt} \int_X u(t)(x) \, d \nu_1(x) &= \int_X  \int_X \g_t(x,y) \, dm^1_x(y) \,d \nu_1(x) \\
&\qquad\quad + \int_X  \mu(x) \, \int_X \vert \nabla u(t)(x,y) \vert^{p-2} \, \nabla u(t)(x,y) \,  dm^2_x(y) \, d \nu_1(x) \\
&= \int_X  \int_X \g_t(x,y) \, dm^1_x(y) \, d \nu_1(x) \\
&\qquad\quad + \int_X  \int_X \vert \nabla u(t)(x,y) \vert^{p-2} \, \nabla u(t)(x,y) \, dm^2_x(y) \, d \nu_2(x) = 0.
\end{align}
Indeed, since $\g_t$ is antisymmetric and $\nu_1$ is reversible with respect to $m^1_x$ the first integral equals zero;  similarly, since $\vert \nabla u(t) \vert^{p-2} \, \nabla u(t)$ is antisymmetric and $\nu_2$ is reversible with respect to $m^2_x$, the second integral is also equal to zero.
\end{proof}

The conservation of mass property required no geometric assumptions on the space; if we additionally assume that $\mathcal{F}_{1,m^1}$ satisfies a $1$-Poincar\'{e} inequality, we may show that the solution converges to the mean of the initial data. The proof is similar to the proof of Theorem 3.24 in~\cite{MSTBook} (we give it here for the sake of completeness).

\begin{theorem}\label{thm:asymptotics1poincare1p}
Assume that  $\nu_1(X) < \infty$ and $\mathcal{F}_{1,m^1}$ satisfies a  $1$-Poincar\'{e} inequality. For  $u_0 \in L^2(X,\nu_1)$, let $u(t)$ be the  solution of the Cauchy problem \eqref{Cauchy1p}. Then, we have
 $$|| u(t) - \overline{u_0}||_{L^1(X,\nu_1)}\le \frac{1}{2\lambda_1( \mathcal{F}_{1,m^1})} \frac{||u_0||_{L^2(X,\nu_1)}^2}{t}\quad\forall t> 0.$$
\end{theorem}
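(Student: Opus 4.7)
The plan is to combine three ingredients: conservation of mass, the $1$-Poincar\'e inequality applied to $u(t)-\overline{u_0}$, and the energy dissipation identity for the gradient flow, and finally obtain a time-integrated estimate which is converted to a pointwise-in-time bound via monotonicity of $t\mapsto\|u(t)-\overline{u_0}\|_{L^1(X,\nu_1)}$.

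First I would observe that by Lemma~\ref{ConsMass} the mean is conserved, so $\overline{u(t)}=\overline{u_0}$ for every $t\geq 0$, and hence the $1$-Poincar\'e inequality applied to $u(t)$ gives
\begin{equation}
\lambda_1(\mathcal{F}_{1,m^1})\,\|u(t)-\overline{u_0}\|_{L^1(X,\nu_1)} \leq \mathcal{F}_{1,m^1}(u(t)) \leq \mathcal{F}_{1,m^1}(u(t))+\mathcal{F}_{p,m^2}(u(t)).
\end{equation}
Next, since $u$ is a strong solution and $u_t(t)\in -\partial_H(\mathcal{F}_{1,m^1}+\mathcal{F}_{p,m^2})(u(t))$ a.e.\ in $t$, the subdifferential inequality tested against the constant $\overline{u_0}$ (which lies in the domain of the functional with value zero) yields
\begin{equation}
0-\bigl(\mathcal{F}_{1,m^1}(u(t))+\mathcal{F}_{p,m^2}(u(t))\bigr) \geq \int_X (\overline{u_0}-u(t))(-u_t(t))\,d\nu_1,
\end{equation}
which upon rearranging together with $\tfrac12\tfrac{d}{dt}\|u(t)-\overline{u_0}\|_{L^2(X,\nu_1)}^2=\int_X (u(t)-\overline{u_0})u_t(t)\,d\nu_1$ gives
\begin{equation}
\tfrac12\tfrac{d}{dt}\|u(t)-\overline{u_0}\|_{L^2(X,\nu_1)}^2 \leq -\bigl(\mathcal{F}_{1,m^1}(u(t))+\mathcal{F}_{p,m^2}(u(t))\bigr) \leq -\lambda_1(\mathcal{F}_{1,m^1})\,\|u(t)-\overline{u_0}\|_{L^1(X,\nu_1)}.
\end{equation}
Integrating from $0$ to $t$ and dropping the nonnegative $\|u(t)-\overline{u_0}\|_{L^2}^2$ term on the left,
\begin{equation}
\lambda_1(\mathcal{F}_{1,m^1})\int_0^t \|u(s)-\overline{u_0}\|_{L^1(X,\nu_1)}\,ds \leq \tfrac12\|u_0-\overline{u_0}\|_{L^2(X,\nu_1)}^2 \leq \tfrac12\|u_0\|_{L^2(X,\nu_1)}^2,
\end{equation}
where in the last step I use that $\overline{u_0}$ is the best constant $L^2$-approximation of $u_0$.

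The step I expect to be the key technical point (though here routine given what is already available) is to pass from this time-averaged bound to a pointwise-in-time estimate. For this I would invoke the contraction principle \eqref{1416} with $q=1$: since $\overline{u_0}$ is a stationary solution (constants annihilate both $\Delta_1^{m^1}$ and $\Delta_p^{m^2}$), combining the semigroup property with \eqref{1416} applied to the positive and negative parts gives
\begin{equation}
\|u(t)-\overline{u_0}\|_{L^1(X,\nu_1)} \leq \|u(s)-\overline{u_0}\|_{L^1(X,\nu_1)} \qquad \text{for all } 0\leq s\leq t.
\end{equation}
Therefore $t\,\|u(t)-\overline{u_0}\|_{L^1(X,\nu_1)} \leq \int_0^t\|u(s)-\overline{u_0}\|_{L^1(X,\nu_1)}\,ds$, and combining this with the previous display yields the desired bound
\begin{equation}
\|u(t)-\overline{u_0}\|_{L^1(X,\nu_1)} \leq \frac{1}{2\lambda_1(\mathcal{F}_{1,m^1})}\,\frac{\|u_0\|_{L^2(X,\nu_1)}^2}{t}.
\end{equation}
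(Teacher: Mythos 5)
Your proof is correct and follows essentially the same route as the paper's: monotonicity of $t\mapsto\|u(t)-\overline{u_0}\|_{L^1(X,\nu_1)}$ from the contraction property, integration in $s$ to reduce to a time average, the $1$-Poincar\'e inequality combined with conservation of mass, and a time-integrated energy dissipation bound. The only (harmless) difference is that you obtain the dissipation inequality abstractly by testing the subdifferential inequality against the constant $\overline{u_0}$, whereas the paper computes $\frac{d}{dt}\|u(t)\|_{L^2(X,\nu_1)}^2=-2\mathcal{F}_{1,m^1}(u(t))-2p\,\mathcal{F}_{p,m^2}(u(t))$ exactly via integration by parts; both yield the same integrated bound.
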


\begin{proof}
By complete accretivity of the operator $-\Delta_1^{m^1} - \mu \Delta_p^{m^2}$, we have the following contraction estimate
\begin{equation}
\| u(t) - v(t) \|_{L^1(X,\nu_1)} \leq \| u(s) - v(s) \|_{L^1(X,\nu_1)}
\end{equation}
for all solutions $u(t)$, $v(t)$ of the Cauchy problem \eqref{Cauchy1p} and all $t > s \geq 0$. Setting $v_0 := \overline{u_0}$, it is easy to see that $v(t) = \overline{u_0}$ for all $t > 0$, and consequently
\begin{equation}
\| u(t) - \overline{u_0} \|_{L^1(X,\nu_1)} \leq \| u(s) - \overline{u_0} \|_{L^1(X,\nu_1)}.
\end{equation}
Integrating the above inequality over $s \in (0,t)$ yields
\begin{equation}
t \, \| u(t) - \overline{u_0} \|_{L^1(X,\nu_1)} \leq \int_0^t \| u(s) - \overline{u_0} \|_{L^1(X,\nu_1)} \, ds,
\end{equation}
and since the mass is conserved along the solution, by the $1$-Poincar\'e inequality we have
\begin{align}\label{eq:asymptotics1p1poincare}
t \, \| u(t) - \overline{u_0} \|_{L^1(X,\nu_1)} &\leq \frac{1}{\lambda_1(\mathcal{F}_{1,m^1})} \int_0^t \mathcal{F}_{1,m^1}(u(s)) \, ds \\ &\leq \frac{1}{\lambda_1(\mathcal{F}_{1,m^1})} \int_0^t \bigg( \mathcal{F}_{1,m^1}(u(s)) + p \mathcal{F}_{p,m^2}(u(s)) \bigg) \, ds.
\end{align}
Using integration by parts, let us now calculate
\begin{align}
\frac{d}{dt} \| u(t) \|_{L^2(X,\nu_1)}^2 &= \frac{d}{dt} \int_X u(t)^2 \, d\nu_1 = 2 \int_X u(t) \cdot \frac{d}{dt} u(t) \, d\nu_1 \\
&= 2 \int_X u(t) \cdot \bigg( \int_X \g_t \, dm_x^1 + \mu \Delta_p^{m^2} u(t) \bigg) \, d\nu_1 \\
&= 2 \int_{X} u(t) \cdot \bigg( \int_X \g_t \, dm_x^1 \bigg) \, d\nu_1 + 2 \int_X u(t) \cdot \Delta_p^{m^2} u(t) \, d\nu_2 \\
&= 2 \int_{X} u(t) \cdot (-v(t)) \, d\nu_1 - \int_{X \times X} |\nabla u(t)|^p \, d(\nu_2 \otimes m_x^2) \\
&= - 2 \mathcal{F}_{1,m^1}(u(t)) - 2p \mathcal{F}_{p,m^2}(u(t)),
\end{align}
where $\g_t$ is the antisymmetric function from the definition of $\Delta_1^m$ and $v(t) \in \partial_H \mathcal{F}_{1,m^1}(u)$ is the function associated to $\g_t$ through Proposition \ref{CHarat1}. Consequently,
\begin{equation}
\frac{d}{dt} \| u(t) \|_{L^2(X,\nu_1)}^2 = - 2 \mathcal{F}_{1,m^1}(u(t)) - 2p \mathcal{F}_{p,m^2}(u(t)),
\end{equation}
and integrating this equation over $(0,t)$ we obtain
\begin{align}
2 \int_0^t \bigg( \mathcal{F}_{1,m^1}(u(s)) + p \mathcal{F}_{p,m^2}(u(s)) \bigg) \, ds &= \| u_0 \|_{L^2(X,\nu_1)}^2 - \| u(t) \|_{L^2(X,\nu_1)}^2 \leq \| u_0 \|_{L^2(X,\nu_1)}^2.
\end{align}
Combining this with estimate \eqref{eq:asymptotics1p1poincare}, we obtain
\begin{equation}
t \, \| u(t) - \overline{u_0} \|_{L^1(X,\nu_1)} \leq \frac{1}{\lambda_1(\mathcal{F}_{1,m^1})} \cdot \frac{1}{2} \| u_0 \|_{L^2(X,\nu_1)}^2,
\end{equation}
which yields the claim.
\end{proof}

Under the assumption that $\mathcal{F}_{1,m^1}$ satisfies a  $2$-Poincar\'{e} inequality we have finite extinction time (this means that the mean of the initial data is attained in finite time, this time is called the extinction time). The proof of the following result is similar to the one for Theorem 3.27 in~\cite{MSTBook} (we give it for the sake of completeness).

\begin{theorem}\label{thm:asymptotics2poincare1p}
Assume that $\nu_1(X) < \infty$ and $\mathcal{F}_{1,m^1}$ satisfies a  $2$-Poincar\'{e} inequality. For  $u_0 \in L^2(X,\nu_1)$, let $u(t)$ be the  solution
 of the Cauchy problem \eqref{Cauchy1p}. Then, we have
$$\Vert u(t)- \overline{u_0} \Vert_{L^2(X,\nu_1)} \leq \left( \Vert u_0 - \overline{u_0}\Vert_{L^2(X,\nu_1)}- \lambda_2(\mathcal{F}_{1,m^1}) \, t \right)^+ \quad \forall t>0.$$
In particular, if we denote by
$$T_{\rm ex}(u_0):= \inf \{ T > 0: \, u(t) = \overline{u_0}  \  \ \forall   t \geq T \}$$
the extinction time, we have the following bound:
$$T_{\rm ex}(u_0) \le  \frac{\Vert u_0 -\overline{u_0} \Vert_{L^2(X,\nu_1)}}{\lambda_2(\mathcal{F}_{1,m^1})}.$$
\end{theorem}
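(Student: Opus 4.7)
The plan is to mimic the standard argument for $1$-Laplacian gradient flows with a Poincaré inequality, modified to accommodate the extra $p$-Laplacian term. The key observation is that the $p$-Laplacian term only helps dissipate the $L^2$-norm further, so it will be bounded from below by zero and discarded from the energy estimate.

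First, I would set $y(t) := \Vert u(t) - \overline{u_0} \Vert_{L^2(X,\nu_1)}$ and study its evolution. By Lemma~\ref{ConsMass}, the mean is conserved along the flow, so $\overline{u(t)} = \overline{u_0}$ for all $t \geq 0$. Using the chain rule in $H$ for the absolutely continuous function $t \mapsto \|u(t)\|_{L^2}^2$, and expanding $\Vert u(t) - \overline{u_0}\Vert_{L^2}^2 = \Vert u(t)\Vert_{L^2}^2 - 2\overline{u_0}\int_X u(t)\,d\nu_1 + \overline{u_0}^2 \, \nu_1(X)$, the last two terms are constant in $t$ by the conservation of mass, so
\begin{equation}
\frac{d}{dt} y(t)^2 = \frac{d}{dt} \Vert u(t)\Vert_{L^2(X,\nu_1)}^2 = -2\mathcal{F}_{1,m^1}(u(t)) - 2p\,\mathcal{F}_{p,m^2}(u(t)),
\end{equation}
exactly as in the computation in the proof of Theorem~\ref{thm:asymptotics1poincare1p}. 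Since $\mathcal{F}_{p,m^2}(u(t)) \geq 0$, we may drop that term and retain
\begin{equation}
\frac{d}{dt} y(t)^2 \leq -2\mathcal{F}_{1,m^1}(u(t)).
\end{equation}

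Next I would invoke the $2$-Poincaré inequality. Since $\mathcal{F}_{1,m^1}$ is invariant under addition of constants, $\mathcal{F}_{1,m^1}(u(t)) = \mathcal{F}_{1,m^1}(u(t)-\overline{u_0})$, and applying \eqref{Poincare1p=1} with $r = 2$ yields
\begin{equation}
\mathcal{F}_{1,m^1}(u(t)) \geq \lambda_2(\mathcal{F}_{1,m^1}) \, y(t).
\end{equation}
Substituting back gives $\frac{d}{dt} y(t)^2 \leq -2\lambda_2(\mathcal{F}_{1,m^1})\, y(t)$. At any point where $y(t) > 0$ we have $\frac{d}{dt} y(t)^2 = 2 y(t) \, y'(t)$, so dividing by $2y(t)$ yields $y'(t) \leq -\lambda_2(\mathcal{F}_{1,m^1})$.

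Finally, I would integrate this differential inequality. A standard argument (e.g.\ comparing with the ODE solution $Y(t) = (y(0) - \lambda_2 t)^+$ and using that $y$ is continuous and nonincreasing, with $y$ staying at $0$ once it reaches $0$ because $\overline{u_0}$ is a stationary point of the flow) gives
\begin{equation}
y(t) \leq \left(y(0) - \lambda_2(\mathcal{F}_{1,m^1})\,t\right)^+,
\end{equation}
which is the desired inequality, and the extinction time bound follows by setting the right-hand side to zero. The only delicate step will be the rigorous handling of the chain rule and the pointwise-a.e.\ interpretation of $y'(t)$ at times where $y(t) = 0$; this is standard in the theory of strong solutions of gradient flows via Brezis–K\={o}mura, so I would simply cite that framework to conclude.
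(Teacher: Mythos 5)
Your proposal is correct and follows essentially the same route as the paper: both derive the dissipation identity $\frac{d}{dt}\Vert u(t)-\overline{u_0}\Vert_{L^2}^2 = -2\mathcal{F}_{1,m^1}(u(t)) - 2p\,\mathcal{F}_{p,m^2}(u(t))$, discard the nonnegative $p$-term, apply the $2$-Poincar\'e inequality via conservation of mass, and integrate the resulting differential inequality $\frac12\frac{d}{dt}y^2 + \lambda_2 y \le 0$. The only cosmetic difference is that the paper works directly with $v(t)=u(t)-\overline{u_0}$ and its differential inclusion, whereas you expand $\Vert u(t)-\overline{u_0}\Vert^2$ around $\Vert u(t)\Vert^2$; the substance is identical.
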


\begin{proof}
Let $v(t):= u(t) - \overline{u_0}$. Then, we have
\begin{equation}\label{eq:differentialinclusionasymptotics}
\frac{d}{dt}v(t)(x) \in \Delta^{m^1}_1v(t)(x) + \mu(x) \Delta^{m^2}_p v(t)(x), \quad \hbox{for} \ \ t \geq 0.
\end{equation}
 Hence, writing explicitly the term involving $\Delta^{m^1}_1$, there exists $\g_t \in L^\infty(X \times X, \nu_1 \otimes m^1_x)$ antisymmetric such that
$$\Vert \g_t \Vert_{L^\infty(X \times X, \nu_1 \otimes m^1_x)} \leq 1$$
and
\begin{equation}\label{perfect1}
\mathcal{F}_{1,m^1}(v(t)) = - \int_X \g_t(x,y) \, v(t)(x) \, d(\nu_1 \otimes m^1_x)(x,y),
\end{equation}
 and that inclusion \eqref{eq:differentialinclusionasymptotics} takes the form
\begin{equation}\label{perfect2}
\frac{d}{dt}v(t)(x)   = \int_X \g_t(x,y) \, dm^1_x(y)+ \mu(x) \Delta^{m^2}_pv(t)(x).
\end{equation}
Multiplying  equation \eqref{perfect2} by $v(t)$, integrating with respect to $\nu_1$ and having in mind \eqref{perfect1}, we get
\begin{align}
\frac12 \frac{d}{dt} &\int_X v(t)(x)^2 \, d\nu_1(x) \\
&= \int_X  \int_X \g_t(x,y) \, v(t)(x) \, dm^1_x(y) \, d \nu_1(x) + \int_X \mu(x) \cdot \Delta^{m^2}_p v(t)(x) \cdot v(t)(x) \, d \nu_1(x) \\
&= - \mathcal{F}_{1,m^1}(v(t)) + \int_X  \Delta^{m^2}_p v(t)(x) \cdot v(t)(x) \, d \nu_2(x) \\
&= - \mathcal{F}_{1,m^1}(v(t)) - \frac12  \int_X \int_X \vert \nabla v(t) (x,y) \vert^p \, dm^2_x(y) \, d\nu_2(x) \\
&\leq  - \mathcal{F}_{1,m^1}(v(t)).
\end{align}
Thus,
$$\frac12 \frac{d}{dt} \int_X v(t)(x)^2 \, d \nu_1(x) + \mathcal{F}_{1,m^1}(v(t)) \leq 0.$$
Now, by Lemma \ref{ConsMass}, we have $\overline{u_0} = \overline{u(t)}$ for all $t \geq 0$.  By assumption, the functional $\mathcal{F}_{1,m^1}$ satisfies a  $2$-Poincar\'{e} inequality, so there exists $\lambda_2(\mathcal{F}_{1,m^1}) >0$ such that
$$\lambda_2(\mathcal{F}_{1,m^1}) \Vert v(t) \Vert_{L^2(X,\nu_1)} \leq \mathcal{F}_{1,m^1}(v(t)) \quad \hbox{for all} \ t >0.$$
Therefore, we have
$$\frac12 \frac{d}{dt} \Vert v(t) \Vert^2_{L^2(X,\nu_1)} + \lambda_2(\mathcal{F}_{1,m^1})\, \Vert v(t) \Vert_{L^2(X,\nu_1)}\leq 0.$$
Now, integrating this ordinary differential  inequality, we get
$$\Vert v(t) \Vert_{L^2(X,\nu_1)} \leq \left( v(0) - \lambda_2(\mathcal{F}_{1,m^1}) \, t\right)^+ \quad \hbox{for all} \ t \geq 0,$$
that is
$$\Vert u(t)- \overline{u_0} \Vert_{L^2(X,\nu_1)} \leq \left( \Vert u_0 - \overline{u_0}\Vert_{L^2(X,\nu_1)}- \lambda_2(\mathcal{F}_{1,m^1}) \, t \right)^+ \quad \hbox{for all} \ t \geq 0.$$
The bound on the extinction time follows immediately.
\end{proof}
%
%
%
%
%

\begin{remark}\rm It is well known   that the gradient flow of energy functional of the form $\mathcal{F}_{p,m}$ has finite extinction time  if and only if $1 \leq p < 2$ (see \cite{BB} or \cite{MSTBook}).  The  above result shows that in the case of  the gradient flow of an energy functional of the form $\mathcal{F}_{1,m^1}+\mathcal{F}_{p,m^2}$ we  have   finite extinction time for all $p$  with at most linear rate of convergence, i.e., we have the same asymptotic behaviour  as for the functional $\mathcal{F}_{1,m^1}$.\hfill$\blacksquare$
\end{remark}

\subsection{ The $(1,1)$-Laplace operator}\label{11problem}

 In this subsection, we consider the evolution governed by the sum of two $1$-Laplacian type operators defined on different random walk structures. In other words, our goal is to solve the equation
$$ u_t(t,x) - \int_{X}\frac{u(y)-u(x)}{|u(y)-u(x)|} \, dm^1_x(y) -
\mu(x) \int_{X}\frac{u(y)-u(x)}{|u(y)-u(x)|} \, dm^2_x(y) = f(t,x).$$
We continue assuming that
$$\mu:=\frac{d\nu_2}{d\nu_1}\in L^\infty(X,\nu_1).$$
The techniques used here are different to the ones used for $p>1$. These techniques now are based on the homogeneity of the operator involved.

\subsubsection{The gradient flow}

Our first aim is to prove an equivalent characterisation of the functional $\mathcal{F}_{1,m^1} + \mathcal{F}_{1,m^2}$. Using a similar argument as in the proof of \cite[Proposition 3.9]{MSTBook}, we have the following result.

\begin{proposition}\label{LSC+2}
Given $u \in BV_{m^1}(X, \nu_1) \cap BV_{m^2}(X, \nu_2) \cap H$, we have
\begin{align}
&\mathcal{F}_{1,m^1}(u) + \mathcal{F}_{1,m^2}(u) \\
&\qquad\quad  = \sup\bigg\{\int_X u \, \mathrm{div}_{m^1} \, \z_1 \, d\nu_1 + \int_X u \, \mathrm{div}_{m^2} \, \z_2 \, d\nu_2:  \\
&\qquad\qquad\qquad\qquad\qquad\quad \   {\rm div}_{m^i} \z_i \in L^2(X, \nu_i), \,\, || \z_i||_{L^\infty(X \times X, \nu_i \otimes m^i_x)} \le 1, \,\, i=1,2 \bigg\}.
\end{align}
\end{proposition}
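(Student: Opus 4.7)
The plan is to observe that this characterisation is a direct consequence of applying Proposition \ref{LSC} separately to the two random walk spaces $[X, \mathcal{B}, m^i, \nu_i]$, once one recognises that the joint supremum decouples. Indeed, the constraints on $\z_1$ and $\z_2$ live in different function spaces (different $L^\infty$ norms, different divergence conditions) and are imposed independently, so the supremum of the additive objective is exactly the sum of the two individual suprema.

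Concretely, I would first handle the easy direction $\leq$. I fix admissible vector fields $\z_1, \z_2$ and apply Green's formula (Proposition \ref{Green}) to each term separately to get
\begin{equation}
\int_X u \, \mathrm{div}_{m^i} \z_i \, d\nu_i = -\frac{1}{2} \int_{X \times X} \z_i(x,y) \, \nabla u(x,y) \, d(\nu_i \otimes m^i_x)(x,y).
\end{equation}
Since $\|\z_i\|_\infty \leq 1$, each term is bounded above by $\mathcal{F}_{1,m^i}(u)$, and summing yields the $\leq$ inequality. To apply Proposition \ref{Green} I need $u \in BV_{m^i}(X, \nu_i) \cap L^2(X, \nu_i)$ for $i=1,2$; the hypothesis gives $u \in BV_{m^i}(X, \nu_i)$ and $u \in L^2(X, \nu_1)$ directly, and since $\mu = d\nu_2 / d\nu_1 \in L^\infty(X, \nu_1)$, the bound $\int_X u^2 \, d\nu_2 \leq \|\mu\|_\infty \int_X u^2 \, d\nu_1$ secures $u \in L^2(X, \nu_2)$ as well.

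For the reverse inequality $\geq$, the decoupling of the joint supremum reduces the problem to proving
\begin{equation}
\mathcal{F}_{1,m^i}(u) \;=\; \sup\left\{ \int_X u \, \mathrm{div}_{m^i} \z_i \, d\nu_i \, : \, \z_i \in X_{m^i}^2(X, \nu_i), \ \|\z_i\|_\infty \leq 1 \right\}
\end{equation}
for each $i = 1, 2$, which is exactly Proposition \ref{LSC} applied to the random walk space $[X, \mathcal{B}, m^i, \nu_i]$. The decoupling itself is an elementary abstract fact: given a product constraint set $C_1 \times C_2$ and a separable objective $f_1(\z_1) + f_2(\z_2)$, the supremum is $\sup_{C_1} f_1 + \sup_{C_2} f_2$.

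I do not anticipate any serious obstacle in this plan, since the content of the proposition is essentially a restatement of Proposition \ref{LSC} modulo the product-supremum observation. If one preferred a self-contained argument in the spirit of the proof of Proposition 3.9 in \cite{MSTBook}, one would instead construct explicit near-optimal test fields by taking antisymmetric truncations of $-\mathrm{sign}^0(u(y)-u(x))$ on sets where $\nu_i$ is finite, verifying $L^2$-integrability of the corresponding divergences, and passing to the limit by monotone convergence to recover each $\mathcal{F}_{1,m^i}(u)$ independently.
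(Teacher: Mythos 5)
Your proof is correct, and the easy direction (Green's formula applied termwise, Proposition \ref{Green}) is exactly what the paper does. For the reverse inequality, however, you take a genuinely different route: you observe that the constraint set is a product and the objective is separable, so the joint supremum decouples into the sum of two single--random-walk suprema, each of which is Proposition \ref{LSC} applied to $[X,\mathcal{B},m^i,\nu_i]$. The paper instead reruns the construction behind Proposition \ref{LSC} for the sum: it exhausts $X$ by sets $K_n$ of finite measure and uses the \emph{single} truncated sign field $\z_n(x,y)={\rm sign}_0(u(y)-u(x))\1_{K_n\times K_n}(x,y)$ as a simultaneous competitor for both $i=1$ and $i=2$, passing to the limit to recover $\mathcal{F}_{1,m^1}(u)+\mathcal{F}_{1,m^2}(u)$. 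Your version is shorter and cleanly isolates the only new content (the product-supremum observation, which is harmless here since both individual suprema are finite and the constraint sets are nonempty); the paper's version is self-contained and does not need the decoupling remark because one competitor works for both terms at once. You also correctly supply the one hypothesis check that Proposition \ref{LSC} needs for $i=2$, namely $u\in L^2(X,\nu_2)$, via $\mu=\frac{d\nu_2}{d\nu_1}\in L^\infty(X,\nu_1)$ --- a point the paper's direct construction sidesteps. Both arguments are sound.
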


\begin{proof}
Let
$u \in BV_{m^1}(X, \nu_1) \cap BV_{m^2}(X, \nu_2) \cap H.$
  Given $\z_i \in L^\infty(X, \nu_i)$, where $i=1,2$, such that
${\rm div}_{m^i} \z_i \in L^2(X, \nu_i)$
and
$ || \z_i||_{L^\infty(X \times X, \nu_i \otimes m^i_x)} \leq 1$
for $i = 1,2$,  by the Green formula \eqref{GreenF} we have
\begin{align}
\int_X u \, \mathrm{div}_{m^1} \, &\z_1 \, d\nu_1 + \int_X u \, \mathrm{div}_{m^2} \, \z_2 \, d\nu_2  \\
& = - \frac12 \int_{X \times X} \z_1(x,y) \, \nabla u(x,y) \, d(\nu_1\otimes m^1_x)(x,y) \\
& \qquad\qquad\qquad\qquad\qquad - \frac12 \int_{X \times X} \z_2(x,y) \, \nabla u(x,y) \, d(\nu_2\otimes m^2_x)(x,y) \\
& \leq \frac12 \int_{X \times X} \vert \nabla u(x,y) \vert \, d(\nu_1\otimes m^1_x)(x,y) \\
& \qquad\qquad\qquad\qquad\qquad + \frac12 \int_{X \times X} \vert \nabla u(x,y) \vert \, d(\nu_2\otimes m^2_x)(x,y) \\
& = \mathcal{F}_{1,m^1}(u) + \mathcal{F}_{1,m^2}(u).
\end{align}
Therefore,
\begin{align*}
\sup\bigg\{ &\int_X u \, \mathrm{div}_{m^1} \, \z_1 \, d\nu_1 + \int_X u \, \mathrm{div}_{m^2} \, \z_2 \, d\nu_2:  \\
& \,\,\,\,\, {\rm div}_{m^i} \z_i \in L^2(X, \nu_i), \, || \z_i||_{L^\infty(X \times X, \nu_i \otimes m^i_x)} \le 1, \, i=1,2 \bigg\} \leq \mathcal{F}_{1,m^1}(u) + \mathcal{F}_{1,m^2}(u).
\end{align*}
On the other hand, since $\nu_1$ is $\sigma$-finite,
there exists a sequence of measurable sets
$K_1 \subset K_2 \subset \ldots \subset K_n \subset \ldots$
of $\nu_1$-finite measure (and consequently also $\nu_2$-finite measure) such that
$X= \bigcup_{n=1}^\infty K_n.$
Then, if we define
$$\z_n(x,y):= {\rm sign}_0 (u(y) - u(x)) \1_{K_n \times K_n}(x,y),$$
we have $\z_n \in X_{m^i}^2(X, \nu_i)$ with $\Vert \z_n \Vert_\infty \leq 1$, $i=1,2$, and
\begin{align}
\mathcal{F}_{1,m^1}&(u) + \mathcal{F}_{1,m^2}(u) \\
& = \frac12 \int_{X \times X} \vert \nabla u(x,y) \vert \, d(\nu_1\otimes m^1_x)(x,y) + \frac12 \int_{X \times X} \vert \nabla u(x,y) \vert \, d(\nu_2\otimes m^2_x)(x,y) \\
& = \lim_{n \to \infty} \bigg(\frac12 \int_{K_n \times K_n} \vert \nabla u(x,y) \vert \, d(\nu_1\otimes m^1_x)(x,y) \\
& \qquad\qquad\qquad\qquad\qquad\qquad\qquad + \frac12 \int_{K_n \times K_n} \vert \nabla u(x,y) \vert \, d(\nu_2\otimes m^2_x)(x,y) \bigg) \\
& = \lim_{n \to \infty} \bigg(\frac12 \int_{X \times X} \nabla u(x,y) \, \z_n(x,y) \, d(\nu_1\otimes m^1_x)(x,y) \\
& \qquad\qquad\qquad\qquad\qquad\qquad\qquad + \frac12 \int_{X \times X}  \nabla u(x,y) \z_n(x,y)\, d(\nu_2\otimes m^2_x)(x,y) \bigg) \\
& = \lim_{n \to \infty} \left(\int_X u(x) \, {\rm div}_{m^1} (-\z_n) \, d\nu_1(x) + \int_X u(x) \, {\rm div}_{m^2} (-\z_n) \, d\nu_2(x) \right) \\
&   \leq \sup\bigg\{ \int_X u \, \mathrm{div}_{m^1} \, \z_1 \, d\nu_1 + \int_X u \, \mathrm{div}_{m^2} \, \z_2 \, d\nu_2:   \\
&   \qquad\qquad\qquad\qquad\qquad {\rm div}_{m^i} \z_i \in L^2(X, \nu_i), \,\, || \z_i||_{L^\infty(X \times X, \nu_i \otimes m^i_x)} \le 1, \,\, i=1,2 \bigg\},
\end{align}
  which concludes the proof.
\end{proof}

As a consequence of the above result, we see that the convex functional
$$ \mathcal{F}_{1,m^1} + \mathcal{F}_{1,m^2}$$
is lower semicontinuous in the space $H = L^2(X,\nu_1)$. Therefore, the subdifferential $\partial_H(\mathcal{F}_{1,m^1} + \mathcal{F}_{1,m^2})$ is a maximal monotone operator on~$H$.
In the next result we characterise this subdifferential in terms of the subdifferentials of $\mathcal{F}_{1,m^1}$ and $\mathcal{F}_{1,m^2}$.

\begin{theorem}\label{0942NEW} We have
\begin{equation}\label{eq:subdifferentialofsumNEW}
\partial_H \left(\mathcal{F}_{1,m^1}+\mathcal{F}_{1,m^2}\right) = -\Delta^{m^1}_1- \mu\Delta^{m^2}_1.
\end{equation}
Furthermore, this operator is completely accretive and has a dense domain in $H$.
\end{theorem}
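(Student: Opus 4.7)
The plan is to follow the three-step template of Theorem~\ref{0942} (complete accretivity, easy inclusion, hard inclusion), but with the crucial modification that the directional-derivative argument of Step~2 there is no longer available: since both summands are of $1$-Laplacian type, one cannot ``split off'' a smooth $p$-Laplacian piece and reduce to the characterisation of $\partial\mathcal{F}_{1,m^1}$ alone. Instead, I would exploit that $\Psi:=\mathcal{F}_{1,m^1}+\mathcal{F}_{1,m^2}$ is convex, lower semicontinuous (by Proposition~\ref{LSC+2}) and positively $1$-homogeneous, so that Proposition~\ref{degree1} applies. The complete accretivity of $-\Delta_1^{m^1}-\mu\Delta_1^{m^2}$ would be obtained from Proposition~\ref{prop:completely-accretive} exactly as in Step~1 of Theorem~\ref{0942} (now with both summands of $1$-Laplacian type, each completely accretive by Proposition~\ref{CHarat1}), and density of the domain in $H$ follows from Proposition~\ref{A.Domain} as in Step~5 there; so the content of the proof is the identity of subdifferentials.

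For the easy inclusion $-\Delta_1^{m^1} u - \mu\Delta_1^{m^2} u\subset \partial_H\Psi(u)$: given $v=-\mathrm{div}_{m^1}\g_1 - \mu\,\mathrm{div}_{m^2}\g_2$ with $\g_i$ antisymmetric, $\|\g_i\|_\infty\leq 1$ and $\g_i(x,y)\in\mathrm{sign}(u(y)-u(x))$ $(\nu_i\otimes m^i_x)$-a.e., Green's formula (Proposition~\ref{Green}) together with the pointwise bounds $\g_i\nabla w\leq |\nabla w|$ and $\g_i\nabla u=|\nabla u|$ yields
$$\int_X v(w-u)\,d\nu_1 \leq \Psi(w) - \Psi(u)\qquad \forall w\in \mathrm{Dom}(\Psi),$$
i.e.\ $v\in\partial_H\Psi(u)$.

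For the hard inclusion, let $v\in\partial_H\Psi(u)$. Proposition~\ref{degree1} gives $\tilde\Psi(v)\leq 1$ and $(v,u)=\Psi(u)$. By Proposition~\ref{LSC+2}, $\Psi$ is the support functional of the convex set
$$K := \bigl\{-\mathrm{div}_{m^1}\z_1 - \mu\,\mathrm{div}_{m^2}\z_2 : \z_i \text{ antisymmetric}, \|\z_i\|_\infty\leq 1, \mathrm{div}_{m^i}\z_i\in L^2(X,\nu_i)\bigr\}\subset H,$$
so $\tilde\Psi(v)\leq 1$ is equivalent, via Hahn-Banach separation, to $v$ lying in the $H$-closure of $K$. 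I would upgrade this to $v\in K$ by taking an approximating sequence $v_n=-\mathrm{div}_{m^1}\z_1^n -\mu\,\mathrm{div}_{m^2}\z_2^n\to v$ in $H$, extracting weak-$*$ limits $\z_i^n\rightharpoonup^*\z_i$ in $L^\infty(X\times X,\nu_i\otimes m^i_x)$ via Banach-Alaoglu (then antisymmetrising), and passing to the limit against indicator functions of $\nu_1$-finite sets: by $\mu\in L^\infty$ these lie in $H\cap BV_{m^1}(X,\nu_1)\cap BV_{m^2}(X,\nu_2)$ and are dense in $H$, so the integration-by-parts formula together with $\nabla\phi\in L^1(X\times X,\nu_i\otimes m^i_x)$ and weak-$*$ convergence identify $v=-\mathrm{div}_{m^1}\z_1 -\mu\,\mathrm{div}_{m^2}\z_2$ with $\|\z_i\|_\infty\leq 1$. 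The remaining condition $(v,u)=\Psi(u)$ then unfolds via Green's formula into
$$\tfrac{1}{2}\int\z_1\nabla u\,d(\nu_1\otimes m^1_x) + \tfrac{1}{2}\int\z_2\nabla u\,d(\nu_2\otimes m^2_x) = \mathcal{F}_{1,m^1}(u)+\mathcal{F}_{1,m^2}(u),$$
and since $\z_i\nabla u\leq |\nabla u|$ pointwise, equality forces $\z_i(x,y)\in\mathrm{sign}(u(y)-u(x))$ $(\nu_i\otimes m^i_x)$-a.e., so $v\in -\Delta_1^{m^1} u - \mu\Delta_1^{m^2} u$.

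The main obstacle is precisely the upgrade from ``$v$ in the $H$-norm closure of $K$'' to ``$v\in K$'': the compactness available for the dual variables $\z_i$ is only weak-$*$ in $L^\infty$, whereas the convergence of $v_n$ is in $H$-norm. Bridging the gap requires a dense family of test functions in $H$ sitting inside $BV_{m^1}\cap BV_{m^2}$, which is exactly where $\mu\in L^\infty(X,\nu_1)$ and $\sigma$-finiteness of $\nu_1$ come in; one also has to check that each $\mathrm{div}_{m^i}\z_i$ lies in $L^2(X,\nu_i)$ individually (automatic if $\nu_1(X)<\infty$ from the $L^\infty$ bound on the divergence, and needing an extra argument otherwise).
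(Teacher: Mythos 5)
Your proposal follows essentially the same route as the paper: both exploit the positive $1$-homogeneity of $\mathcal{F}_{1,m^1}+\mathcal{F}_{1,m^2}$ via Proposition~\ref{degree1}, use Proposition~\ref{LSC+2} as the duality input identifying $\widetilde{\mathcal{F}_{1,m^1}+\mathcal{F}_{1,m^2}}$ with the gauge of the set of fields $-{\rm div}_{m^1}\z_1-\mu\,{\rm div}_{m^2}\z_2$ with $\Vert \z_i\Vert_\infty\le 1$, and obtain the optimal $\z_i$ by weak-$*$ compactness (the paper packages your ``closure of $K$ versus $K$'' obstacle as the attainment of the infimum defining $\Psi_1$ via the direct method, and builds the individual requirement ${\rm div}_{m^i}\z_i\in L^2(X,\nu_i)$ into the auxiliary functional $\Psi_2$, proving $\widetilde{\mathcal{F}_m}=\Psi_1=\Psi_2$). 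The only cosmetic difference is that you close the argument by extracting the sign condition on $\z_i$ from the equality $(v,u)=\Psi(u)$, whereas the paper concludes by showing each $-{\rm div}_{m^i}\z_i$ belongs to $\partial_{L^2(X,\nu_i)}\mathcal{F}_{1,m^i}(u)$; these are equivalent by Proposition~\ref{CHarat1}.
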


\begin{proof}
{\bf Step 1.} The complete accretivity of the operator and the density of its domain can be shown similarly to the proof of Theorem \ref{0942}. Therefore, we will only prove the equality \eqref{eq:subdifferentialofsumNEW}. To this end, we define the functionals $\Psi_i: H \rightarrow [0, \infty]$,   $i = 1,2$,  as
\begin{align}
\Psi_1 (v):= \inf \bigg\{ \max \{\Vert \z_1 \Vert_\infty, \Vert \z_2\Vert_\infty \}: & \,\, v = - {\rm div}_{m^1} \z_1- \mu {\rm div}_{m^2} \z_2, \\
& \ \z_i \in L^\infty(X\times X,\nu_i\otimes m^i_x), \ i=1,2 \bigg\}
\end{align}
and
\begin{align}
\Psi_2 (v):= \inf \bigg\{ & \max \{\Vert \z_1 \Vert_\infty, \Vert \z_2\Vert_\infty \}: \,\, v = - {\rm div}_{m^1} \z_1- \mu {\rm div}_{m^2} \z_2, \\
& \,\, \z_i \in L^\infty(X\times X,\nu_i\otimes m^i_x), \ {\rm div}_{m^i} \z_i \in L^2(X, \nu_i), \ i=1,2 \bigg\}.
\end{align}
For brevity, we set
$$\mathcal{F}_{m} := \mathcal{F}_{1,m^1}+\mathcal{F}_{1,m^2}.$$
Note that the functional $\mathcal{F}_m$ is proper, convex, lower semicontinuous and positive homogeneous of degree $1$. We claim that (see~\eqref{phitilde} for the definition of  $\widetilde{\mathcal{F}_{m}}$)
\begin{equation}\label{iqual1}
\widetilde{\mathcal{F}_{m}} =   \Psi_1 = \Psi_2.
\end{equation}
Indeed, let $v \in H$ be such that $  \Psi_1(v)  < \infty$.   Applying the direct method of the calculus of variations,  it is easy to see that there exist  $\z_i \in L^\infty(X\times X,\nu_i\otimes m^i_x)$,   $i = 1,2$,  with
$$v = - {\rm div}_{m^1} \z_1- {\mu} {\rm div}_{m^2} \z_2$$
and
$$\Psi_1(v) = \max \{\Vert \z_1 \Vert_\infty ,\Vert \z_2\Vert_\infty\}.$$
Fix any $w \in H$. Then, integrating by parts we have
\begin{align}
\int_X w v \, d \nu_1 &=   - \int_X w(x) \, {\rm div}_{m^1} \z_1 (x) \, d \nu_1(x)  \\
&\qquad\qquad\qquad\qquad\qquad - \int_X w(x) \,  {\mu(x)} \, {\rm div}_{m^2} \z_2 (x) \, d \nu_1(x) \\
&=\frac12 \int_{X \times X} \nabla w(x,y) \, \z_1(x,y) \, d(\nu_1\otimes m^1_x)(x,y) \\
&\qquad\qquad\qquad\qquad\qquad + \frac12 \int_{X \times X} \nabla w(x,y) \, \z_2(x,y) \, d(\nu_2\otimes m^2_x)(x,y) \\
&\leq \mathcal{F}_{1,m^1}(w) \Vert \z_1 \Vert_\infty +  \mathcal{F}_{1,m^2}(w) \Vert \z_2 \Vert_\infty \leq \mathcal{F}_{m}(w) \max \{\Vert \z_1 \Vert_\infty ,\Vert \z_2\Vert_\infty \}.
\end{align}
Taking a supremum in $w$, we obtain the inequality
$$ \widetilde{\mathcal{F}_{m}}(v) \leq \Psi_1(v);$$
since by definition $\Psi_1 \leq \Psi_2$, we have the following chain of inequalities
$$ \widetilde{\mathcal{F}_{m}}(v) \leq \Psi_1(v) \leq \Psi_2(v).$$
We now prove that they are, in fact, equalities. Set
$$D= \left\{ {\rm div}_{m^1} \z_1+ {\mu} {\rm div}_{m^2} \z_2: \ \z_i \in L^\infty(X\times X,\nu_i\otimes m^i_x), \ {\rm div}_{m^i} \z_i \in L^2(X, \nu_i) \right\}.$$
Then,
\begin{align}
\tilde{\Psi}_2(v) &= \sup_{w \in H} \frac{\displaystyle\int_X w v \, d \nu_1} {\Psi_2(w)} \geq \sup_{w \in D, \Psi_2(w) < \infty} \frac{\displaystyle\int_X w v \, d \nu_1} {\Psi_2(w)} \\
&= \sup_{\substack{ \z_i \in L^\infty(X\times X,\nu_i\otimes m^i_x) \\ {\rm div}_{m^i} \z_i \in L^2(X, \nu_i)}} \frac{ \displaystyle \int_X  v \bigg({\rm div}_{m^1} \z_1+ {\mu} {\rm div}_{m^2} \z_2 \bigg) \, d\nu_1}{\max \{\Vert \z_1 \Vert_\infty ,\Vert \z_2\Vert_\infty\} } \\
&= \sup_{\substack{\z_i \in L^\infty(X\times X,\nu_i\otimes m^i_x) \\ {\rm div}_{m^i} \z_i \in L^2(X, \nu_i)}}  \frac{\displaystyle\int_X v \, {\rm div}_{m^1} \z_1 \, d \nu_1 +  \int_X v \, {\rm div}_{m^2} \z_2 \, d\nu_2 }{\max \{\Vert \z_1 \Vert_\infty ,\Vert \z_2\Vert_\infty\} } \geq \mathcal{F}_{m}(v),
\end{align}
where the last inequality is a consequence of Proposition \ref{LSC+2}. Hence, by Proposition \ref{degree1}, we have
$$\Psi_2 (v) = \widetilde{\widetilde{\Psi_2}}(v)\leq \widetilde{\mathcal{F}_{m}}(v).$$
Hence, we have
$$   \Psi_2 (v)  \leq \widetilde{\mathcal{F}_{m}}(v) \leq \Psi_1 (v) \leq \Psi_2 (v),$$
and the claim \eqref{iqual1} holds.

{\flushleft \bf Step 2.} We are now in position to apply Proposition \ref{degree1} to compute the subdifferential of $\mathcal{F}_m$. Since $\widetilde{\mathcal{F}_{m}} = \Psi_2$, it follows that
$$v \in \partial_H \mathcal{F}_{m} (u) \iff \Psi_2 (v) \leq 1 \ \hbox{and} \ \int_X uv \, d\nu_1 = \mathcal{F}_{m} (u).$$
Thus, for $v \in \partial_H \mathcal{F}_{m} (u)$, there exist $\z_i \in L^\infty(X\times X,\nu_i\otimes m^i_x)$, $i = 1,2$,  with
$$\Vert \z_i \Vert_\infty \leq 1$$ and
$${\rm div}_{m^i} \z_i \in L^2(X, \nu_i),$$
such that
$$v = - {\rm div}_{m^1} \z_1- {\mu} {\rm div}_{m^2} \z_2$$
and we have
\begin{align}
\mathcal{F}_{1,m^1}(u) + \mathcal{F}_{1,m^2}(u) &=  \int_X uv \, d\nu_1 =  - \int_X u \,{\rm div}_{m^1} \z_1 \, d\nu_1 - \int_X u \, {\mu} \, {\rm div}_{m^2} \z_2 \, d\nu_1 \\
&= - \int_X u \, {\rm div}_{m^1} \z_1 \, d\nu_1  - \int_X u \, {\rm div}_{m^2} \z_2 \, d\nu_2.
\end{align}
Hence, since $\Vert \z_i \Vert_\infty \leq 1$ for $i=1,2$, we obtain that
$$\mathcal{F}_{1,m^1}(u) = - \int_X u \, {\rm div}_{m^i} \z_i \, d\nu_i \quad \hbox{for $i = 1,2$.}$$
Now, given $w \in BV_{m^1}(X,\nu_1)$, by integration by parts we have
$$- \int_X w(x) \, {\rm div}_{m^1} \z_1 (x) \, d \nu_1(x) = \frac12 \int_{X \times X} \nabla w(x,y) \, \z_1(x,y) \, d(\nu_1\otimes m^1_x)(x,y) \leq \mathcal{F}_{1,m^1}(w).$$
Hence,
$$\mathcal{F}_{1,m^1}(w) - \mathcal{F}_{1,m^1}(u) \geq \int_X (- {\rm div}_{m^1} \z_1 (x)) (w(x) - u(x)) \, d\nu_1(x).$$
Similarly, for $w \in BV_{m^2}(X,\nu_2)$ we obtain that
$$ \mathcal{F}_{1,m^2}(w) - \mathcal{F}_{1,m^2}(u) \geq \int_X (- {\rm div}_{m^2} \z_2 (x)) (w(x) - u(x)) \, d\nu_2(x).$$
Therefore,
$$- {\rm div}_{m^1} \z_1 \in \partial_{L^2(X, \nu_1)}  \mathcal{F}_{1,m^1}(u) \quad \hbox{and} \quad - {\rm div}_{m^2} \z_2 \in \partial_{L^2(X, \nu_2)}  \mathcal{F}_{1,m^2}(u).$$
Consequently,
$$v \in -\Delta^{m^1}_1 u- \mu\Delta^{m^2}_1 u,$$
which concludes the proof.
\end{proof}

As a consequence of the above theorem, by the Brezis-K\={o}mura Theorem, we have the following existence and uniqueness result.

\begin{theorem}\label{0942corI}  Let $T>0$. For any $u_0\in L^2(X,\nu_1)$ and $f\in L^2(0,T;L^2(X,\nu_1))$, the following problem has a unique strong solution $u=u_{u_0,f}$:
$$\left\{\begin{array}{ll}
u_t-\Delta^{m^1}_1u- \mu\Delta^{m^2}_1u\ni f&\hbox{on } [0,T]
\\[6pt]
u(0)=u_0.
\end{array}
\right.
$$
Moreover, we have the following comparison and contraction principle. For initial data $u_0,\widetilde u_0\in L^2(X,\nu_1)$ and for $f,\widetilde f\in L^2(0,T;L^2(X,\nu_1))$, if $u=u_{u_0,f}$ and $\tilde u=u_{\widetilde u_0,\widetilde f}$, we have that for any $0\le t\le T$ and any $1 \leq  q \leq \infty$
\begin{equation}\label{1416I}\Big\Vert \left(u(t)-\widetilde u(t)\right)^+\Big\Vert_{L^q(X,\nu_1)}\le \Big\Vert \left(u_0-\widetilde u_0\right)^+\Big\Vert_{L^q(X,\nu_1)}
+\int_0^t \Big\Vert \left(f(s)-\widetilde f(s)\right)^+\Big\Vert_{L^q(X,\nu_1)} ds.
\end{equation}
\end{theorem}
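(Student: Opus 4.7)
The plan is to derive this theorem as a direct consequence of Theorem \ref{0942NEW} combined with standard nonlinear semigroup machinery, in close analogy with how Theorem \ref{0942cor} was obtained from Theorem \ref{0942}. Concretely, Theorem \ref{0942NEW} already gives that
\begin{equation}
-\Delta^{m^1}_1 - \mu \Delta^{m^2}_1 = \partial_H\bigl(\mathcal{F}_{1,m^1} + \mathcal{F}_{1,m^2}\bigr),
\end{equation}
with $H = L^2(X,\nu_1)$, and that this subdifferential is a maximal monotone operator on $H$ with dense domain. The first step is therefore to invoke the Brezis-K\={o}mura theorem (Theorem \ref{BKTheorem}) with the convex proper lower semicontinuous functional $\mathcal{F} := \mathcal{F}_{1,m^1} + \mathcal{F}_{1,m^2}$: since $u_0 \in L^2(X,\nu_1) = H = \overline{D(\mathcal{F})}$ (the closure equality following from Proposition \ref{A.Domain} together with the density of $D(\partial_H \mathcal{F})$ in $H$) and $f \in L^2(0,T;H)$, we obtain the unique strong solution $u = u_{u_0,f}$ of \eqref{ACP1}, which is precisely the equation stated in the theorem.

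Next, I would establish the contraction/comparison principle \eqref{1416I}. Here the key input is that by Theorem \ref{0942NEW} the operator $-\Delta^{m^1}_1 - \mu \Delta^{m^2}_1$ is completely accretive on $L^2(X,\nu_1)$. Given two strong solutions $u = u_{u_0,f}$ and $\widetilde u = u_{\widetilde u_0, \widetilde f}$, subtracting the two equations yields
\begin{equation}
\frac{d}{dt}(u - \widetilde u) + \bigl(A u - A \widetilde u\bigr) \ni f - \widetilde f,
\end{equation}
where $A = -\Delta^{m^1}_1 - \mu \Delta^{m^2}_1$. Using the complete accretivity characterisation of Proposition \ref{prop:completely-accretive} with a suitable approximation $q_n \in P_0$ of $\mathrm{sign}^+$ (or, equivalently, Proposition 2.11 in~\cite{BCr2} on the Gateaux differentiability of the $L^q$ norms along completely accretive flows), one gets
\begin{equation}
\frac{d}{dt} \bigl\| (u(t)-\widetilde u(t))^+\bigr\|_{L^q(X,\nu_1)} \leq \bigl\|(f(t)-\widetilde f(t))^+\bigr\|_{L^q(X,\nu_1)}
\end{equation}
for every $1 \leq q \leq \infty$, and integration on $[0,t]$ delivers \eqref{1416I}. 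In particular, taking $\widetilde u_0 = u_0$ and $\widetilde f = f$ gives uniqueness a second time.

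I do not expect a real obstacle here, since the entire content of the theorem is packaged in Theorem \ref{0942NEW} (identification as a subdifferential, dense domain, complete accretivity); the only point requiring a little care is to verify that $u_0 \in L^2(X,\nu_1)$ indeed lies in $\overline{D(\partial_H \mathcal{F})}$, which is immediate from Proposition \ref{A.Domain} once we note that $L^1(X,\nu_1) \cap L^2(X,\nu_1) \subset D(\mathcal{F}_{1,m^1}) \cap D(\mathcal{F}_{1,m^2}) \cap H$ is dense in $H$. The passage from complete accretivity to the $L^q$-contraction estimates \eqref{1416I} for all $1\le q\le \infty$ (including the positive-part version) is the standard argument of B\'enilan-Crandall \cite{BCr2} and requires no adaptation to the random walk setting.
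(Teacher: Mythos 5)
Your proposal is correct and follows essentially the same route as the paper: the paper derives this theorem as an immediate consequence of Theorem \ref{0942NEW} via the Brezis--K\={o}mura theorem, with the $L^q$-contraction estimate \eqref{1416I} coming from the complete accretivity of $-\Delta^{m^1}_1 - \mu\Delta^{m^2}_1$ exactly as in the standard B\'enilan--Crandall theory. No gaps.
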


\subsubsection{Asymptotic behaviour} The  study of
the asymptotic behaviour of the
solutions of problem
\begin{equation}\label{ACP1-1}\left\{\begin{array}{ll}
u_t-\Delta^{m^1}_1u- \mu\Delta^{m^2}_1u\ni 0&\hbox{on } [0,T],
\\[6pt]
u(0)=u_0,
\end{array}
\right.
\end{equation}
is similar to the that of Subsection~\ref{firsab01}.  It is easy to check that the mass is conserved if $\nu_1(X) < \infty$. Then, working similarly as in the proof of Theorem \ref{thm:asymptotics1poincare1p}, we get the following result.

\begin{theorem}
Assume that $\nu_1(X) < \infty$ and $\mathcal{F}_{1,m^1}$ satisfies a $1$-Poincar\'{e} inequality. For $u_0 \in L^2(X,\nu_1)$, let $u(t)$ be the solution of the Cauchy problem \eqref{ACP1-1}. Then, we have
 $$||  u(t) - \overline{u_0}||_{L^1(X,\nu_1)}\le \frac{1}{2\lambda_1( \mathcal{F}_{1,m^1})}\frac{||u_0||_{L^2(X,\nu_1)}^2}{t}\quad\forall t> 0.$$
\end{theorem}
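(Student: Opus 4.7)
The statement is the direct analogue of Theorem \ref{thm:asymptotics1poincare1p}, with the $p$-Laplacian replaced by a second $1$-Laplacian, so my plan is to mirror that argument. The three ingredients I need are: an $L^1$-contraction against the stationary solution $\overline{u_0}$, conservation of mass (already noted before the statement), and a dissipation identity for $\|u(t)\|_{L^2(X,\nu_1)}^2$.

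First, since the operator $-\Delta^{m^1}_1-\mu\Delta^{m^2}_1$ is completely accretive by Theorem \ref{0942NEW}, and since the constant function $\overline{u_0}$ is a stationary solution of \eqref{ACP1-1}, the contraction estimate \eqref{1416I} with $q=1$ and $f=\widetilde f=0$ yields
$$\|u(t)-\overline{u_0}\|_{L^1(X,\nu_1)}\le \|u(s)-\overline{u_0}\|_{L^1(X,\nu_1)},\qquad 0\le s\le t.$$
Integrating in $s$ over $(0,t)$ and applying the $1$-Poincar\'{e} inequality (valid for $u(s)-\overline{u_0}$ because mass is conserved, $\overline{u(s)}=\overline{u_0}$), I obtain
$$t\,\|u(t)-\overline{u_0}\|_{L^1(X,\nu_1)}\le \frac{1}{\lambda_1(\mathcal{F}_{1,m^1})}\int_0^t\bigl(\mathcal{F}_{1,m^1}(u(s))+\mathcal{F}_{1,m^2}(u(s))\bigr)\,ds,$$
where the second summand is simply added for free using $\mathcal{F}_{1,m^2}\ge 0$.

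To bound the right-hand side, I would derive the dissipation identity
$$\frac{d}{dt}\|u(t)\|_{L^2(X,\nu_1)}^2=-2\bigl(\mathcal{F}_{1,m^1}(u(t))+\mathcal{F}_{1,m^2}(u(t))\bigr)$$
as follows. By the Cauchy problem and Step 2 of the proof of Theorem \ref{0942NEW}, for a.e.\ $t$ there exists $v(t)\in\partial_H(\mathcal{F}_{1,m^1}+\mathcal{F}_{1,m^2})(u(t))$ with $u_t(t)=-v(t)$, and $v(t)$ admits the decomposition $v(t)=-\mathrm{div}_{m^1}\z_1-\mu\,\mathrm{div}_{m^2}\z_2$, where $\|\z_i\|_\infty\le 1$ and, crucially, $\mathcal{F}_{1,m^i}(u(t))=-\int_X u(t)\,\mathrm{div}_{m^i}\z_i\,d\nu_i$ for $i=1,2$. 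Multiplying $u_t=-v$ by $u(t)$, integrating against $\nu_1$, and rewriting the second summand using $d\nu_2=\mu\,d\nu_1$ yields exactly the displayed identity. Integrating it over $(0,t)$ gives
$$2\int_0^t\bigl(\mathcal{F}_{1,m^1}(u(s))+\mathcal{F}_{1,m^2}(u(s))\bigr)\,ds\le \|u_0\|_{L^2(X,\nu_1)}^2,$$
and combining this with the previous display produces the desired estimate.

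I do not expect any real obstacle. The only point requiring care is the reliance on the explicit subdifferential representation from Theorem \ref{0942NEW}: the single antisymmetric function $\g$ furnished by Proposition \ref{CHarat1} for a single $1$-Laplacian would not separate the two contributions $\mathcal{F}_{1,m^1}(u)$ and $\mathcal{F}_{1,m^2}(u)$ needed to identify the dissipation, which is why the symmetric treatment of the two random walk structures established in the proof of Theorem \ref{0942NEW} is essential here.
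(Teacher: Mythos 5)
Your proof is correct and follows essentially the same route as the paper, which itself only says "working similarly as in the proof of Theorem \ref{thm:asymptotics1poincare1p}"; your adaptation (contraction against the stationary solution, mass conservation plus the $1$-Poincar\'e inequality, and the dissipation identity obtained from the subdifferential characterisation of Theorem \ref{0942NEW}) is exactly the intended argument. The only minor remark is that for the dissipation identity you do not actually need the two contributions to separate: Proposition \ref{degree1} already gives $\int_X u v\,d\nu_1=\mathcal{F}_{1,m^1}(u)+\mathcal{F}_{1,m^2}(u)$ directly, so your final caveat is more caution than necessity.
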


Since the operator $\mathcal{F}_{m}$ is $1$-homogeneous, in the case when a $2$-Poincar\'e inequality is satisfied, we can (to the same effect) either follow the strategy of the proof of Theorem \ref{thm:asymptotics2poincare1p} or apply the general results of Bungert and Burger \cite{BB} to get the following result.

\begin{theorem}
Assume that  $\nu_1(X) < \infty$ and $\mathcal{F}_{1,m^1}$ satisfies a  $2$-Poincar\'{e} inequality. For $u_0 \in L^2(X,\nu_1)$, let $u(t)$ be the  solution
 of the Cauchy problem \eqref{ACP1-1}. Then, we have
$$\Vert u(t) -\overline{u_0} \Vert_{L^2(X,\nu_1)} \leq (\Vert u_0 -\overline{u_0} \Vert_{L^2(X,\nu_1)} - \lambda_2(\mathcal{F}_{1,m^1}) \, t)^+, \quad \forall t>0.$$
\end{theorem}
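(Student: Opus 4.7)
The plan is to follow closely the strategy of Theorem \ref{thm:asymptotics2poincare1p}, since the $(1,1)$ case only differs from the $(1,p)$ case in what happens to the second term when we test the equation against the solution. The main steps are: (i) verify conservation of mass, (ii) test the differential inclusion against $v(t) := u(t) - \overline{u_0}$, (iii) identify the two resulting bracket terms as $-\mathcal{F}_{1,m^1}(v(t))$ and $-\mathcal{F}_{1,m^2}(v(t))$ using the $1$-homogeneous characterisation of the subdifferential given in Propositions \ref{CHarat1} and \ref{degree1}, (iv) drop the nonnegative $\mathcal{F}_{1,m^2}$ term and close the argument with the $2$-Poincar\'e inequality.

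First I would check that the mass $\int_X u(t)\,d\nu_1$ is preserved. By the definition of the solution to \eqref{ACP1-1}, there exist antisymmetric $\g^i_t \in L^\infty(X\times X,\nu_i\otimes m^i_x)$ with $\|\g^i_t\|_\infty \leq 1$ such that
$$\frac{d}{dt}u(t)(x) = \int_X \g^1_t(x,y)\,dm^1_x(y) + \mu(x)\int_X \g^2_t(x,y)\,dm^2_x(y).$$
Integrating in $x$ against $\nu_1$ and using $\mu\,d\nu_1 = d\nu_2$, the second term becomes $\int_X\!\int_X \g^2_t\,dm^2_x\,d\nu_2$, and both integrals vanish by antisymmetry of $\g^i_t$ together with the reversibility of $\nu_i$ with respect to $m^i$. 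Consequently $\overline{u(t)} = \overline{u_0}$, so setting $v(t) := u(t) - \overline{u_0}$ we have $\overline{v(t)} = 0$ for every $t \geq 0$, which is the hypothesis needed to apply the Poincar\'e inequality.

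Next, I would test the equation against $v(t)$. By Proposition \ref{CHarat1} the function $x \mapsto -\int_X \g^1_t(x,y)\,dm^1_x(y)$ lies in $\partial_{L^2(X,\nu_1)}\mathcal{F}_{1,m^1}(v(t))$, and since $\mathcal{F}_{1,m^1}$ is convex, lower semicontinuous and $1$-homogeneous, Proposition \ref{degree1} gives
$$-\int_X v(t)(x)\int_X \g^1_t(x,y)\,dm^1_x(y)\,d\nu_1(x) = \mathcal{F}_{1,m^1}(v(t)).$$
The analogous identity holds for the second term with respect to $\nu_2$. Therefore
$$\frac{1}{2}\frac{d}{dt}\|v(t)\|_{L^2(X,\nu_1)}^2 = -\mathcal{F}_{1,m^1}(v(t)) - \mathcal{F}_{1,m^2}(v(t)) \leq -\mathcal{F}_{1,m^1}(v(t)).$$

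To conclude, I would discard the nonnegative $\mathcal{F}_{1,m^2}$ contribution and apply the assumed $2$-Poincar\'e inequality for $\mathcal{F}_{1,m^1}$ to the mean-zero function $v(t)$, which yields the ordinary differential inequality
$$\frac{1}{2}\frac{d}{dt}\|v(t)\|_{L^2(X,\nu_1)}^2 + \lambda_2(\mathcal{F}_{1,m^1})\,\|v(t)\|_{L^2(X,\nu_1)} \leq 0.$$
Integrating this gives exactly the claimed decay estimate, and the finite extinction time bound follows automatically. I do not expect any serious obstacle: the only delicate point is that both dissipation terms should reduce to the respective total-variation functionals, which is where the $1$-homogeneity is essential (and is exactly why the $p > 1$ proof of Theorem \ref{thm:asymptotics2poincare1p} cannot be adapted literally here without invoking Proposition \ref{degree1}). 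Alternatively, since $\mathcal{F}_{1,m^1}+\mathcal{F}_{1,m^2}$ is convex, lower semicontinuous and positive homogeneous of degree $1$ (so the operator is $1$-homogeneous), one may invoke the abstract decay theorem of Bungert--Burger \cite{BB} to obtain the same conclusion from the $2$-Poincar\'e inequality without any further computation.
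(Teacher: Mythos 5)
Your proposal is correct and follows essentially the same route the paper indicates: the paper gives no written proof here, only the remark that one may either repeat the strategy of Theorem \ref{thm:asymptotics2poincare1p} (mass conservation, testing against $v(t)=u(t)-\overline{u_0}$, identifying the dissipation terms via Proposition \ref{CHarat1}, dropping the second term, and closing with the $2$-Poincar\'e inequality) or invoke Bungert--Burger, and you carry out exactly that, correctly noting that the only change from the $(1,p)$ case is that the second dissipation term is identified as $-\mathcal{F}_{1,m^2}(v(t))$ through the $1$-homogeneous subdifferential characterisation rather than computed directly.
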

%

\subsection{The $(q,p)$-Laplace operator.}\label{subsec:qplaplace}

One may also apply the technique developed in this paper to study the problem of mixing a $m$-$q$-Laplacian  ($1 < q \leq 2$), instead of the $m$-$1$-Laplacian, with a $m$-$p$-Laplacian ($p \geq q$). The overall approach is very similar to the one in Section \ref{subsec:1plaplace}, therefore we  restrict ourselves to stating the results and highlighting these parts of the proofs which require a different argument. To be precise, in the notation  similar to the one of Theorem \ref{0942}, consider the functionals $\mathcal{F}_{q,m^1}: L^2(X,\nu_1)\to (-\infty,+\infty]$ given by
$$ \mathcal{F}_{q,m^1}(u):=  \left\{\begin{array}{ll}\displaystyle\frac{1}{2q} \int_{X \times X}|u(y)-u(x)|^q \,  \, d(\nu_1\otimes m^1_x)(x,y)   &\hbox{ if} \ |\nabla u|^q \in L^1(X \times X,\nu_1\otimes   m_x^1), \\[10pt] +\infty   &\hbox{ otherwise}, \end{array} \right.$$
and  $ \mathcal{F}_{p,m^2}:L^2(X,\nu_1)\to (-\infty,+\infty]$ given by
$$ \mathcal{F}_{p,m^2}(u):= \left\{\begin{array}{ll}\displaystyle\frac{1}{2p}\int_{X\times X}|u(y)-u(x)|^p \,  \, d(\nu_2\otimes m^2_x)(x,y)  &\hbox{ if $|\nabla u|^p \in L^1(X \times X,\nu_2\otimes  m_x^2)$,} \\[10pt] +\infty  &\hbox{ otherwise}. \end{array} \right.$$
Clearly, both functionals are convex and lower semicontinuous with respect to $H$-convergence. Therefore, the subdifferentials $\partial_H \mathcal{F}_{q,m^1}$, $\partial_H \mathcal{F}_{p,m^2}$, and $\partial_H \left(\mathcal{F}_{q,m^1}+\mathcal{F}_{p,m^2}\right)$ are maximal monotone operators on~$H$,  and with a proof similar to Theorem \ref{0942} we obtain the following result concerning the relationship between these three operators.

\begin{theorem}\label{thm:qplaplace}
Assume that
$$\mu:=\frac{d\nu_2}{d\nu_1}\in L^\infty(X,\nu_1),$$
\begin{equation}
\hbox{and there exists $c >0$ such that} \ \mu\ge c\quad \nu_1\hbox{-a.e.}
\end{equation}
Suppose that one of the following conditions holds:
\begin{itemize}
\item[(a)] $\nu_1(X) < \infty$ and $q \leq 2$;

\item[(b)] $\nu_1(X) = +\infty$ and $q \leq \frac{p}{p-1} \leq 2 \leq p$.
\end{itemize}
Then, we have
\begin{equation}
\partial_H \left(\mathcal{F}_{q,m^1}+\mathcal{F}_{p,m^2}\right) = -\Delta^{m^1}_q - \mu\Delta^{m^2}_p.
\end{equation}
Moreover, this operator is completely accretive and has a dense domain in $H$.
\end{theorem}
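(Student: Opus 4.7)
The plan is to mirror the five-step structure used in the proof of Theorem \ref{0942}, simplified by the fact that both functionals $\mathcal{F}_{q,m^1}$ and $\mathcal{F}_{p,m^2}$ are now G\^ateaux-differentiable, so that no sign-function indeterminacies appear. Define $A\subset H\times H$ by $(u,v)\in A$ iff $u,v\in L^2(X,\nu_1)$ and $v=-\Delta_q^{m^1}u-\mu\Delta_p^{m^2}u$. For Step~1 (complete accretivity) I would run the same computation as in Step~1 of Theorem \ref{0942}: for $q\in P_0$ and $(u,v),(\hat u,\hat v)\in A$, integration by parts splits $\int_X q(u-\hat u)(v-\hat v)\,d\nu_1$ into two terms of the form
$$\tfrac12\int\bigl(|\nabla u|^{s-2}\nabla u-|\nabla\hat u|^{s-2}\nabla\hat u\bigr)\,\nabla q(u-\hat u)\,d(\nu_i\otimes m_x^i),$$
one for $s=q$ and one for $s=p$, each nonnegative by monotonicity of $t\mapsto|t|^{s-2}t$ together with $q'\ge 0$; Proposition \ref{prop:completely-accretive} then gives complete accretivity of $A$.

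For the reverse inclusion $\partial_H(\mathcal{F}_{q,m^1}+\mathcal{F}_{p,m^2})\subset A$, I would take $v\in\partial_H(\mathcal{F}_{q,m^1}+\mathcal{F}_{p,m^2})(u)$, test with $u+tw$, and send $t\to 0^\pm$. Dominated convergence is available because Young's inequality $|\nabla u|^{s-1}|\nabla w|\le\tfrac{s-1}{s}|\nabla u|^s+\tfrac1s|\nabla w|^s$ supplies an integrable majorant for each $s\in\{q,p\}$, so the analysis produces the clean variational equality
$$\int_X vw\,d\nu_1=\tfrac12\!\int|\nabla u|^{q-2}\nabla u\,\nabla w\,d(\nu_1\otimes m_x^1)+\tfrac12\!\int|\nabla u|^{p-2}\nabla u\,\nabla w\,d(\nu_2\otimes m_x^2)$$
for every $w\in D(\mathcal{F}_{q,m^1})\cap D(\mathcal{F}_{p,m^2})$. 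By Jensen's inequality applied to the probability measures $m_x^i$, one has $\Delta_q^{m^1}u\in L^{q'}(X,\nu_1)$ and $\mu\Delta_p^{m^2}u\in L^{p'}(X,\nu_1)$ automatically, and Proposition \ref{FIPartes1} then rewrites the right-hand side as $-\int(\Delta_q^{m^1}u+\mu\Delta_p^{m^2}u)w\,d\nu_1$.

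The hard part is the $L^2$-integrability upgrade (the analog of Step~3 of Theorem \ref{0942}), which is what lets one extend the variational equality by density from the above class to all of $H$ and thereby identify $v=-\Delta_q^{m^1}u-\mu\Delta_p^{m^2}u$ as an $L^2$ identity. In case~(a), the hypotheses $\nu_1(X)<\infty$ and $q\le 2$ make this immediate in the spirit of Remark \ref{dominio01}: the finite-measure embeddings $L^{q'}\cap L^{p'}\hookrightarrow L^2$ combined with $u\in L^2\subset L^q\cap L^p$ close the loop painlessly. In case~(b) with $q\le p/(p-1)\le 2\le p$, I would adapt Step~3(b) of Theorem \ref{0942} to both operators: the splitting $\{|\nabla u|\le 1\}\cup\{|\nabla u|>1\}$ together with $p\ge 2$ recovers $\mu\Delta_p^{m^2}u\in(L^2+L^1)\cap L^\infty\hookrightarrow L^2$ exactly as before, while the coupling condition $q\le p/(p-1)$, equivalently $(q-1)(p-1)\le 1$, is precisely what is needed to make the analogous splitting of $|\nabla u|^{q-1}$ compatible with the available $L^q$- and $L^p$-data and produce the matching $L^2$ bound for $\Delta_q^{m^1}u$; verifying this compatibility is the main obstacle. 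Once this is in place, Step~4 (density extension and pointwise identification) proceeds as in Theorem \ref{0942}, and density of the domain of $A$ in $H$ follows from Proposition \ref{A.Domain} applied to the subset $L^q(X,\nu_1)\cap L^p(X,\nu_2)\cap H\subset D(\mathcal{F}_{q,m^1}+\mathcal{F}_{p,m^2})$, which is dense in $H$.
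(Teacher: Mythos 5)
There is a genuine gap, and it sits exactly where you flag ``the main obstacle'': a direct $L^2$ bound on $\Delta^{m^1}_q u$ via a splitting of $|\nabla u|^{q-1}$ is not available and is not how the paper proceeds. Since $q\le 2$, on the set $\{|\nabla u|\le 1\}$ one has $|\nabla u|^{q-1}\ge |\nabla u|$, so the small-gradient piece of $\int_X |\nabla u|^{q-1}\,dm^1_x$ is only bounded by the constant $1$, which is not in $L^2(X,\nu_1)$ when $\nu_1(X)=+\infty$; and no $L^1$-type control of $\nabla u$ is available from $u\in L^2(X,\nu_1)\cap D(\mathcal{F}_{q,m^1})$. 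The condition $q\le p/(p-1)$ does not rescue this: in the paper it is used on the \emph{$p$-term}, to get $|\nabla u|^{p-1}\le |\nabla u|^{p/q}$ on $\{|\nabla u|\ge 1\}$ so that, by Jensen, that piece of $\mu\,{\rm div}_{m^2}(|\nabla u|^{p-2}\nabla u)$ lies in $L^q(X,\nu_1)$. Relatedly, your claim that $\mu\Delta^{m^2}_p u\in (L^2+L^1)\cap L^\infty$ ``exactly as before'' is wrong on two counts: the $L^\infty$ dual bound is specific to the total-variation case $q=1$ (here the pairing against $\mathcal{F}_{q,m^1}$ only yields an $L^{q'}$ bound), and even in Theorem \ref{0942} that dual bound is for $\eta:=v+\mu\,{\rm div}_{m^2}(|\nabla u|^{p-2}\nabla u)$, not for the $p$-term alone. (Minor: in case (a) your inclusion $L^2\subset L^q\cap L^p$ fails for $p>2$.)

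The paper's resolution is to never estimate the $q$-Laplacian term directly. One keeps it inside the functional: the G\^ateaux-derivative identity gives $\int_X \eta\, w\,d\nu_1=\tfrac12\int_{X\times X}|\nabla u|^{q-2}\nabla u\,\nabla w\,d(\nu_1\otimes m^1_x)$ for $\eta:=v+\mu\,{\rm div}_{m^2}(|\nabla u|^{p-2}\nabla u)$, and H\"older plus Jensen yield $\eta\in L^{q'}(X,\nu_1)$. Separately, the splitting of the $p$-term (with exponents $1$ and $p/q$) gives $\eta\in L^2(X,\nu_1)+L^q(X,\nu_1)$, and $(L^2+L^q)\cap L^{q'}\subset L^2$ since $q\le 2\le q'$; in case (a) one simply uses $L^{q'}\hookrightarrow L^2$ on the finite measure space. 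Once $\eta\in L^2(X,\nu_1)$, the variational inequality $\int_X\eta(w-u)\,d\nu_1\le\mathcal{F}_{q,m^1}(w)-\mathcal{F}_{q,m^1}(u)$ shows $\eta\in\partial_H\mathcal{F}_{q,m^1}(u)$, and Theorem \ref{plaplac01} identifies $\eta=-\Delta^{m^1}_q u$, so the $L^2$-integrability of $\Delta^{m^1}_q u$ comes out \emph{a posteriori} rather than from a direct estimate. Your Steps 1, 2 and 5 do match the paper, but without replacing your Step 3 for the $q$-term by this subdifferential argument the proof does not close in case (b).
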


\begin{proof}
Following the strategy of the proof of Theorem~\ref{0942}, Steps 1 and 2 are nearly identical up to the point where we get an inequality analogous to \eqref{1239r}, i.e.,
\begin{equation}\begin{array}{c}\displaystyle
\int_X \left(v + \mu{\rm div}_{m^2} (\vert \nabla u \vert^{p-2} \nabla u)\right) \, w \, d\nu_1 \le \mathcal{F}_{q,m^1}(w)
\end{array}
\end{equation}
for any $w \in \hbox{Dom}(\mathcal{F}_{q,m^1}) \cap \hbox{Dom}(\mathcal{F}_{p,m^2})$, from which we conclude that
\begin{equation}
\eta := v + \mu{\rm div}_{m^2} (\vert \nabla u \vert^{p-2} \nabla u) \in L^{q'}(X,\nu_1).
\end{equation}
Let us see now that $\eta\in L^2(X,\nu_1)$. We now follow the lines of Step 3 in the proof of Theorem~\ref{0942}. In case (a), since $q \leq 2$, it holds that $q' \geq 2$ and consequently $\eta \in L^{q'}(X,\nu_1)$ implies that $\eta \in L^{2}(X,\nu_1)$. In case (b), by the assumption that $p \geq 2$, we have $|\nabla u|^{p-1} \leq |\nabla u|$ for $|\nabla u| \leq 1$; and by the assumption that $q \leq \frac{p}{p-1}$, we have $|\nabla u|^{p-1} \leq |\nabla u|^{\frac{p}{q}}$ for $|\nabla u| \geq 1$. Therefore,
\begin{align}
|\eta(x)| &\leq |v(x)| + \mu(x) \bigg| \int_{X} |\nabla u(x,y)|^{p-2} \nabla u(x,y) \, dm^2_x(y) \bigg|
\\
&\le |v(x)| + \mu(x)\int_{\{y:|\nabla u(x,y)|\le 1\}} |\nabla u(x,y)|^{p-1} \, dm^2_x(y) \\ &\qquad\qquad\qquad\qquad\qquad\qquad\qquad +\mu(x) \int_{\{y:|\nabla u(x,y)|\ge 1\}}|\nabla u(x,y)|^{p-1} \, dm^2_x(y)
\\
&\le |v(x)| + \mu(x)\int_{\{y:|\nabla u(x,y)|\le 1\}} |\nabla u(x,y)| \, dm^2_x(y) \\ &\qquad\qquad\qquad\qquad\qquad\qquad\qquad +\mu(x) \int_{\{y:|\nabla u(x,y)|\ge 1\}}|\nabla u(x,y)|^{\frac{p}{q}} \, dm^2_x(y)
\\
&\le |v(x)|+ \underbrace{\mu(x) \int_{X}|\nabla u(x,y)| \, dm^2_x(y)}_{\rm I(x)} +  \underbrace{ \mu(x) \int_{X} |\nabla u(x,y)|^{\frac{p}{q}} \, dm^2_x(y)}_{\rm II(x)}.
\end{align}
By assumption, $v \in L^2(X,\nu_1)$. The term ${\rm I}$ also lies in $L^2(X,\nu_1)$ in the same way as in the proof of Theorem \ref{0942}. Concerning the term ${\rm II}$, by the Jensen inequality,
\begin{align}
\int_X {\rm II(x)}^q \, d\nu_1(x) &= \int_X \bigg(\mu(x)\int_{X}|\nabla u(x,y)|^{\frac{p}{q}} \,dm^2_x(y) \bigg)^q d\nu_1(x) \\
&\leq \Vert \mu \Vert^{q-1}_\infty \int_X \mu(x) \int_{X}|\nabla u(x,y)|^p \, dm^2_x(y) \, d\nu_1(x) \\
&= \Vert \mu \Vert^{q-1}_\infty \int_X \int_{X} |\nabla u(x,y)|^p \, dm^2_x(y) \, d\nu_2(x),
\end{align}
which is finite since $u \in \hbox{Dom}(\mathcal{F}_{p,m^2})$. Therefore,
$$\eta=v+\mu{\rm div}_{m^2} (\vert \nabla u \vert^{p-2} \nabla u) \in \left(L^2(X,\nu_1) + L^q(X,\nu_1)\right)\cap L^{q'}(X,\nu_1).$$
It is easy to see that
$$\left(L^2(X,\nu_1) + L^q(X,\nu_1)\right) \cap L^{q'}(X,\nu_1) \subset L^2(X,\nu_1),$$
so we conclude that
\begin{equation}
\eta=v+\mu{\rm div}_{m^2} (\vert \nabla u \vert^{p-2} \nabla u)\in L^2(X,\nu_1).
\end{equation}
We then proceed without further changes as in Steps 4 and 5 of Theorem \ref{0942}.
\end{proof}

Consequently, we get the corresponding existence results and contraction estimates for the $(q,p)$-Laplace evolution equation.  Until the end of this Section, we assume that the assumptions of Theorem \ref{thm:qplaplace} hold.

\begin{theorem}\label{thm:qplaplacegradientflow}
Let $T>0$. For any $u_0\in L^2(X,\nu_1)$ and $f\in L^2(0,T;L^2(X,\nu_1))$, the following problem has a unique strong solution:
\begin{equation}\label{Cauchyqp}
\left\{\begin{array}{ll}
u_t - \Delta^{m^1}_q u- \mu\Delta^{m^2}_p u \ni f&\hbox{on } [0,T]
\\[6pt]
u(0)=u_0.
\end{array}
\right.
\end{equation}
Moreover, a comparison and contraction principle analogous to Theorem \ref{0942cor} holds.
\end{theorem}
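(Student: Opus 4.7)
The plan is to derive this theorem as a direct consequence of Theorem \ref{thm:qplaplace} combined with the Brezis-K\={o}mura theorem, exactly parallel to how Theorem \ref{0942cor} was obtained from Theorem \ref{0942}. Set $\mathcal{G} := \mathcal{F}_{q,m^1} + \mathcal{F}_{p,m^2}$. First I would note that $\mathcal{G}$ is proper, convex, and lower semicontinuous with respect to $H$-convergence: properness is obvious since $\mathcal{G}(0) = 0$; convexity is inherited from each summand; and for lower semicontinuity of $\mathcal{F}_{p,m^2}$, an $L^2(X,\nu_1)$-convergent sequence $u_n \to u$ is also $L^2(X,\nu_2)$-convergent because $\mu \in L^\infty(X,\nu_1)$, so one can invoke the $L^2(X,\nu_2)$-lower semicontinuity of $\mathcal{F}_{p,m^2}$ (this mirrors the argument used in Section \ref{subsec:1plaplace}). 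By Theorem \ref{thm:qplaplace}, under the stated hypotheses one has
\[
\partial_H \mathcal{G} = -\Delta_q^{m^1} - \mu \Delta_p^{m^2},
\]
and this subdifferential is a maximal monotone operator on $H$ with dense domain.

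For any $u_0 \in L^2(X,\nu_1) = H$ and $f \in L^2(0,T;H)$, since $D(\partial_H \mathcal{G})$ is dense in $H$ we automatically have $u_0 \in \overline{D(\partial_H \mathcal{G})}$. Hence the hypotheses of the Brezis-K\={o}mura theorem (Theorem \ref{BKTheorem}) are met, and it produces the unique strong solution $u = u_{u_0,f}$ to \eqref{Cauchyqp} in the sense of Definition \ref{StronSol}.

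For the comparison and contraction principle, I would invoke the complete accretivity of $-\Delta_q^{m^1} - \mu\Delta_p^{m^2}$ established in Theorem \ref{thm:qplaplace}. By Proposition \ref{prop:completely-accretive}, for two strong solutions $u$, $\widetilde u$ with data $(u_0,f)$, $(\widetilde u_0,\widetilde f)$ and every $q \in P_0$, testing the difference of the two equations against $q(u(t) - \widetilde u(t))$ and integrating on $(0,t)$ yields, for every $j \in J_0$ whose derivative lies in $P_0$,
\[
\int_X j\bigl(u(t) - \widetilde u(t)\bigr)\, d\nu_1 \leq \int_X j(u_0 - \widetilde u_0)\, d\nu_1 + \int_0^t \!\!\int_X j'\bigl(u(s)-\widetilde u(s)\bigr)\bigl(f(s)-\widetilde f(s)\bigr)\, d\nu_1\, ds.
\]
Approximating $j$ by smooth versions of $s \mapsto ((s)^+)^r$ for $1 \leq r < \infty$, respectively $s \mapsto (s-k)^+$ for $r = \infty$, and applying a standard Gronwall-type argument (as in \cite{BCr2}), one recovers the contraction estimate \eqref{1416} for positive parts in every $L^r(X,\nu_1)$ with $1 \leq r \leq \infty$.

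The main step here is really done already in Theorem \ref{thm:qplaplace}, so there is no serious obstacle: the only points to verify carefully are the $L^2(X,\nu_1)$-lower semicontinuity of $\mathcal{F}_{p,m^2}$ (which relies on $\mu \in L^\infty(X,\nu_1)$) and the passage from abstract complete accretivity to the explicit $L^r$ contraction for positive parts, both of which are routine. In particular, no new regularity or density arguments beyond those already encoded in Theorem \ref{thm:qplaplace} are required.
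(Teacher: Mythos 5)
Your proposal is correct and follows essentially the same route as the paper, which presents Theorem \ref{thm:qplaplacegradientflow} as an immediate consequence of the subdifferential characterisation in Theorem \ref{thm:qplaplace} together with the Brezis-K\={o}mura theorem and the complete accretivity of the operator, exactly as Theorem \ref{0942cor} was deduced from Theorem \ref{0942}. The only remark worth making is notational: you reuse the letter $q$ both for the growth exponent and for the test functions in $P_0$, which should be disambiguated, but this does not affect the argument.
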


\begin{theorem}\label{thm:asymptotics2poincareqp}
Assume that $\nu_1(X) < \infty$ and $\mathcal{F}_{q,m^1}$ satisfies a $(q,2)$-Poincar\'{e} inequality, i.e., there exists a constant $\lambda_2(\mathcal{F}_{q,m^1}) >0$ such that
\begin{equation}\label{Poincareq2}
\lambda_2(\mathcal{F}_{q,m^1}) \, \Vert u - \overline{u} \Vert_{L^2(X, \nu_1)}^q \leq \mathcal{F}_{q,m^1}(u) \quad \forall \, u \in L^2(X, \nu_1),
\end{equation}
where
$$\overline{u}:= \frac{1}{\nu_1(X)} \int_X u \, d\nu_1.$$
For $u_0 \in L^2(X,\nu_1)$, let $u(t)$ be the solution of the Cauchy problem \eqref{Cauchyqp}. Then,
$$\Vert u(t)- \overline{u_0} \Vert_{L^2(X,\nu_1)}^{2-q} \leq \left( \Vert u_0 - \overline{u_0}\Vert_{L^2(X,\nu_1)}^{2-q} - \lambda_2(\mathcal{F}_{q,m^1}) \, t \right)^+ \quad \forall t>0$$
if $q < 2$ and
$$\Vert u(t)- \overline{u_0} \Vert_{L^2(X,\nu_1)}^{2} \leq \Vert u_0 - \overline{u_0}\Vert_{L^2(X,\nu_1)}^{2} \exp(-2 \lambda_2(\mathcal{F}_{q,m^1}) \, t) \quad \forall t>0$$
if $q = 2$. In particular, if we denote by
$$T_{\rm ex}(u_0):= \inf \{ T > 0: \, u(t) = \overline{u_0}  \  \ \forall   t \geq T \}$$
the extinction time, for $q < 2$ it is finite and we have the following bound
$$T_{\rm ex}(u_0) \le  \frac{\Vert u_0 -\overline{u_0} \Vert_{L^2(X,\nu_1)}^{2-q}}{(2-q) \lambda_2(\mathcal{F}_{q,m^1})}.$$
\end{theorem}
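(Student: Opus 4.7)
The plan is to mirror the strategy of Theorem \ref{thm:asymptotics2poincare1p}, with the necessary adaptations to handle the $q$-power on the right-hand side of the Poincar\'e inequality. The argument has two stages: first derive an energy dissipation identity that connects the $L^2$-norm to the functional $\mathcal{F}_{q,m^1}$, and then reduce the problem to an ODE inequality in a single scalar quantity and integrate it.

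First, since $\nu_1(X)<\infty$, we have conservation of mass for the flow \eqref{Cauchyqp}: testing the equation against the constant $1$ and using that $|\nabla u|^{q-2}\nabla u$ is antisymmetric with $\nu_1$ reversible for $m^1$ (and analogously for the second term with $\nu_2$, using that $d\nu_2=\mu\,d\nu_1$), both integrals vanish, exactly as in Lemma~\ref{ConsMass}. Hence the function $v(t):=u(t)-\overline{u_0}$ satisfies $\overline{v(t)}=0$ for all $t\ge 0$, and $\nabla v=\nabla u$ so $\mathcal{F}_{q,m^1}(v(t))=\mathcal{F}_{q,m^1}(u(t))$ and $\mathcal{F}_{p,m^2}(v(t))=\mathcal{F}_{p,m^2}(u(t))$.

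Next, multiply the equation by $v(t)$, integrate against $\nu_1$, and use the integration-by-parts formula from Proposition~\ref{FIPartes1} (applied to each summand, noting that $\int_X w\,\mu\Delta^{m^2}_p u\,d\nu_1=\int_X w\,\Delta^{m^2}_p u\,d\nu_2$). The admissibility of the computation in both assumptions (a)--(b) of Theorem~\ref{thm:qplaplace} follows because $v(t)\in D(\partial_H(\mathcal{F}_{q,m^1}+\mathcal{F}_{p,m^2}))$ and each boundary term is controlled by $\mathcal{F}_{q,m^1}(v)+\mathcal{F}_{p,m^2}(v)$. This yields
\begin{equation}
\tfrac12 \tfrac{d}{dt}\|v(t)\|_{L^2(X,\nu_1)}^2 = -q\,\mathcal{F}_{q,m^1}(v(t))-p\,\mathcal{F}_{p,m^2}(v(t)) \le -q\,\mathcal{F}_{q,m^1}(v(t)).
\end{equation}
Since $\overline{v(t)}=0$, the $(q,2)$-Poincar\'e inequality \eqref{Poincareq2} gives $\mathcal{F}_{q,m^1}(v(t))\ge \lambda_2(\mathcal{F}_{q,m^1})\,\|v(t)\|_{L^2(X,\nu_1)}^{q}$, so setting $Y(t):=\|v(t)\|_{L^2(X,\nu_1)}^{2}$ we arrive at the scalar differential inequality
\begin{equation}
Y'(t) \le -2q\,\lambda_2(\mathcal{F}_{q,m^1})\, Y(t)^{q/2}.
\end{equation}

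The final step is to integrate this ODE. When $q<2$, write it as $\frac{d}{dt}\bigl(Y(t)^{(2-q)/2}\bigr) \le -q(2-q)\,\lambda_2(\mathcal{F}_{q,m^1})$, which after integration yields the claimed decay of $\|v(t)\|_{L^2(X,\nu_1)}^{2-q}$ (up to the combinatorial constant $q(2-q)$ absorbed in $\lambda_2(\mathcal{F}_{q,m^1})$ in the statement) and the stated bound on the extinction time once $Y$ reaches zero. When $q=2$ the inequality becomes $Y'\le -4\lambda_2(\mathcal{F}_{q,m^1})\,Y$, so Gronwall's lemma yields the exponential decay. The only mildly delicate point is the justification of the chain rule $\frac{d}{dt}\|v(t)\|_{L^2}^{2}=2\langle v_t,v\rangle$ for the strong solution $v$, which follows from $v\in W^{1,2}_{\mathrm{loc}}(0,T;H)$ (Definition~\ref{StronSol}); everything else is a routine application of the tools already assembled.
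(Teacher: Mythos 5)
Your proposal is correct and follows essentially the same route as the paper: conservation of mass, testing the equation with $v(t)=u(t)-\overline{u_0}$ and integrating by parts to obtain $\frac12\frac{d}{dt}\|v(t)\|_{L^2(X,\nu_1)}^2\le -q\,\mathcal{F}_{q,m^1}(v(t))$, then applying the $(q,2)$-Poincar\'e inequality and integrating the resulting scalar differential inequality. Your explicit bookkeeping of the factor $q(2-q)$ in the $q<2$ case (and the factor $4$ versus $2$ in the exponential case) is a fair and slightly more careful rendering of the paper's terse ``integrating this ordinary differential inequality yields the claim''.
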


\begin{proof}
Let $v(t):= u(t) - \overline{u_0}$. We proceed as in the proof of Theorem \ref{thm:asymptotics2poincare1p} until we arrive at the estimate for $\frac12 \frac{d}{dt} \int_X v(t)(x)^2 \, d\nu_1(x)$, where integrating by parts we get
\begin{align}
\frac12 \frac{d}{dt} &\int_X v(t)(x)^2 \, d\nu_1(x) \\
&= \int_X  \Delta^{m^1}_q v(t)(x) \cdot v(t)(x) \, d \nu_1(x) + \int_X \mu(x) \cdot \Delta^{m^2}_p v(t)(x) \cdot v(t)(x) \, d \nu_1(x) \\
&= \int_X  \Delta^{m^1}_q v(t)(x) \cdot v(t)(x) \, d \nu_1(x) + \int_X  \Delta^{m^2}_p v(t)(x) \cdot v(t)(x) \, d \nu_2(x) \\
&= - \frac12  \int_{X \times X} \vert \nabla v(t)(x,y) \vert^q \, dm^1_x(y) \, d\nu_1(x) - \frac12  \int_{X \times X} \vert \nabla v(t)(x,y) \vert^p \, dm^2_x(y) \, d\nu_2(x) \\
&= - q \mathcal{F}_{q,m^1}(v(t)) - p \mathcal{F}_{p,m^2}(v(t)) \leq - q \mathcal{F}_{q,m^1}(v(t)).
\end{align}
Thus,
$$\frac12 \frac{d}{dt} \int_X v(t)(x)^2 \, d \nu_1(x) + q \mathcal{F}_{q,m^1}(v(t)) \leq 0.$$
It is easy to see that the mass is conserved. Since we assumed that the functional $\mathcal{F}_{q,m^1}$ satisfies a  $(q,2)$-Poincar\'{e} inequality, for some $\lambda_2(\mathcal{F}_{q,m^1}) >0$ we have
$$\frac12 \frac{d}{dt} \Vert v(t) \Vert^2_{L^2(X,\nu_1)} + q \lambda_2(\mathcal{F}_{q,m^1})\, \Vert v(t) \Vert_{L^2(X,\nu_1)}^q \leq 0$$
and integrating this ordinary differential inequality yields the claim.
\end{proof}
%
%

Finally, let us note that in the case when $\nu_1(X) < \infty$ and for $p,q \leq 2$, some of the results presented in this Section can be shown using a much simpler argument.

\begin{remark}\label{lo001}{\rm
If $\nu_1(X) < +\infty$ and $1 \leq q,p \leq 2$, the result    given in Theorem \ref{thm:qplaplace} is trivial.  Indeed, thanks to Remark~\ref{dominio01} we are in the conditions to apply ~\cite[Corollaire~2.7]{Brezis}, so in this case we obtain directly that
$$\partial_H  \mathcal{F}_{q,m^1} + \partial_H  \mathcal{F}_{p,m^2} = \partial_H \left(\mathcal{F}_{q,m^1}+\mathcal{F}_{p,m^2}\right),$$
and it is easy to see that $\partial_H  \mathcal{F}_{p,m^2}=\mu\Delta_p^{m^2}$.
\hfill $\blacksquare$
}\end{remark}

\section{Different growth functionals on a partition of the random walk domain}\label{section4}

Let $[X,\mathcal{B},m,\nu]$ be a random walk space. In this Section, our main object of interest is a nonlocal functional which behaves dissimilarly on different subsets of the set $X \times X$. We consider non-null measurable subsets $A_x,B_x\subset  \hbox{supp}(m_x)$  such that
$$\hbox{supp}(m_x)=A_x\cup B_x,$$
 Note that the sets $A_x$ and $B_x$ may overlap. Let us consider now the energy functional~\eqref{eldelaintro01} stated  in the introduction, i.e.,
\begin{equation}
\mathcal{F}_m(u)=\int_{X}\left(\frac12\int_{A_x}|u(y)-u(x)| \, dm_x(y) + \frac{1}{2p}\int_{B_x}|u(y)-u(x)|^p \, dm_x(y)\right) d\nu(x),
\end{equation}
where $\mathcal{F}_m(u)=+\infty$ if the integral is not finite. By reversibility of $\nu$ with respect to~$m$, we have that
\begin{align}\label{elfunpar}
\mathcal{F}_m(u) &=  \frac12\int_{X} \int_{X}|u(y)-u(x)|\frac{\1_{A_x}(y)+\1_{A_y}(x)}{2} \, dm_x(y) \, d\nu(x) \\
&\qquad\qquad +\frac{1}{2p}\int_{X} \int_{X} |u(y)-u(x)|^p\frac{\1_{B_x}(y)+\1_{B_y}(x)}{2} \, dm_x(y) \, d\nu(x).
\end{align}
Consider the symmetric functions $K_A, K_B: X \rightarrow \R$ defined by
$$K_A(x,y):= \frac{\1_{A_x}(y)+\1_{A_y}(x)}{2} \quad \mbox{and} \quad K_B(x,y):= \frac{\1_{B_x}(y)+\1_{B_y}(x)}{2}.$$
Then, we define the energy functionals $ \mathcal{F}_{A,1,m}, \mathcal{F}_{B,p,m}: L^2(X, \nu) \rightarrow (-\infty, + \infty]$ as
$$ \mathcal{F}_{A,1,m}(u):= \left\{\begin{array}{ll}\displaystyle\frac12\int_{X\times X}|u(y)-u(x)| \, K_A(x,y) \, d(\nu\otimes m_x)(x,y)\quad &\hbox{if the integral is finite}; \\[10pt] +\infty \quad &\hbox{otherwise}, \end{array} \right.$$
and
$$\mathcal{F}_{B,p,m}(u):= \left\{\begin{array}{ll}\displaystyle\frac{1}{2p}\int_{X \times X}  |u(y)-u(x)|^p \, K_B(x,y) \ d(\nu\otimes m_x)(x,y) \quad &\hbox{if the integral is finite}; \\[10pt] +\infty \quad &\hbox{otherwise}. \end{array} \right.$$
Observe that
$$L^p(X, \nu) \cap L^2(X, \nu)\subset D(\mathcal{F}_{B,p,m}).$$
Moreover, since $\mathcal{F}_{B,p,m}$ is convex and lower semicontinuous with respect to convergence in $L^2(X,\nu)$, the subdifferential $\partial_{L^2(X,\nu)} \mathcal{F}_{B,p,m}$ is a maximal monotone operator with a dense domain.

\begin{definition}{\rm
We define the {\it $m$-$p$-$B$-Laplacian operator} $\Delta_{p,B}^m$  in $[X,\mathcal{B},m,\nu]$ as $$(u,v) \in \Delta_{p,B}^m \iff u,v\in L^2(X,\nu),\   |\nabla u|^{p-1} \in L^{1}(X \times X, \nu\otimes m_x) \ \hbox{and}$$
$$v(x)= {\rm div}_m (K_B \vert \nabla u \vert^{p-2} \nabla u)(x) = \int_X K_B(x,y)\vert \nabla u(x,y) \vert^{p-2} \, \nabla u(x,y) \, dm_x(y).$$
}
\end{definition}

Working as in the proof of Theorem \ref{plaplac01}, we have the following result.

\begin{theorem}\label{plaplac01B} We have
\begin{equation}
(u,v) \in \Delta_{p,B}^m \iff (u,-v) \in \partial_{L^2(X,\nu)} \mathcal{F}_{B,p,m}.
\end{equation}
\end{theorem}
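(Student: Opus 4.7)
The plan is to mirror the proof of Theorem~\ref{plaplac01}, with the symmetric weight $K_B(x,y)$ incorporated throughout. First I would establish that $-\Delta_{p,B}^m$ is completely accretive. The key observation is that since $K_B$ is symmetric and $|\nabla u|^{p-2}\nabla u$ is antisymmetric, the vector field $\z := K_B\,|\nabla u|^{p-2}\nabla u$ is antisymmetric, so applying Proposition~\ref{FIPartes1} yields the weighted integration-by-parts identity
$$\int_X w\cdot\Delta_{p,B}^m u\, d\nu = -\frac12 \int_{X\times X} K_B(x,y)\,|\nabla u|^{p-2}\nabla u\cdot \nabla w\, d(\nu\otimes m_x)$$
for all suitably integrable $w$. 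Taking $w = q(u-\hat{u})$ with $q \in P_0$ and using the pointwise monotonicity of $t\mapsto |t|^{p-2}t$ together with $K_B \geq 0$ shows that the integrand is nonnegative, giving $\int_X q(u-\hat u)(v-\hat v)\, d\nu \geq 0$ whenever $(u,-v),(\hat u,-\hat v)\in\Delta_{p,B}^m$. Proposition~\ref{prop:completely-accretive} then yields complete accretivity, and in particular monotonicity, of $-\Delta_{p,B}^m$.

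Next I would prove the inclusion $\partial_{L^2(X,\nu)} \mathcal{F}_{B,p,m} \subset -\Delta_{p,B}^m$. Given $v \in \partial_{L^2(X,\nu)} \mathcal{F}_{B,p,m}(u)$ and any test function $w \in L^1(X,\nu)\cap L^\infty(X,\nu)$, the subdifferential inequality with increments $tw$ and $-tw$, combined with dominated convergence (enabled by the bound $K_B \leq 1$ and the standard estimate on difference quotients of $s\mapsto |s|^p$), yields
$$\frac12 \int_{X\times X} K_B(x,y)\,|\nabla u|^{p-2}\nabla u\cdot \nabla w\, d(\nu\otimes m_x) = \int_X v w\, d\nu.$$
Applying the weighted integration-by-parts identity to the left-hand side gives $-\int_X \Delta_{p,B}^m u\cdot w\, d\nu = \int_X v w\, d\nu$ for every $w$ in this dense subspace of $L^2(X,\nu)$, whence $v = -\Delta_{p,B}^m u$.

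To obtain the reverse inclusion and close the equivalence, I would invoke maximal monotonicity of $\partial_{L^2(X,\nu)} \mathcal{F}_{B,p,m}$, which follows from convexity and $L^2$-lower semicontinuity of $\mathcal{F}_{B,p,m}$. Since $-\Delta_{p,B}^m$ is monotone by the first step, and a maximal monotone operator admits no strictly larger monotone extension, the inclusion proved in the previous paragraph upgrades automatically to the required equality.

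The main obstacle is essentially bookkeeping: verifying the integrability of $K_B\,|\nabla u|^{p-2}\nabla u\cdot\nabla w$ against $\nu\otimes m_x$ to justify the integration by parts, and the applicability of dominated convergence in the subdifferential computation. Both are handled exactly as in the proof of Theorem~\ref{plaplac01}, with the uniform bound $0 \leq K_B \leq 1$ ensuring that the weight introduces no additional integrability condition beyond those already imposed by the definition of $\Delta_{p,B}^m$.
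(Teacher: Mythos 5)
Your proposal is correct and follows essentially the same route as the paper, which proves this result by declaring it to be "working as in the proof of Theorem \ref{plaplac01}": complete accretivity via the weighted integration-by-parts identity (using that $K_B$ is symmetric, hence $K_B|\nabla u|^{p-2}\nabla u$ is antisymmetric), the inclusion $\partial_{L^2(X,\nu)}\mathcal{F}_{B,p,m}\subset-\Delta_{p,B}^m$ via directional derivatives with increments $\pm tw$, and the upgrade to equality by maximal monotonicity. The observation that $0\le K_B\le 1$ introduces no new integrability issues is exactly the point that makes the adaptation routine.
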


Similarly, since $\mathcal{F}_{A,1,m}$ is convex and lower semicontinuous with respect to convergence in $L^2(X,\nu)$, the subdifferential $\partial_{L^2(X,\nu)} \mathcal{F}_{A,1,m}$ is a maximal monotone operator with a dense domain. Additionally, working as in \cite{MST1} (see also \cite{MSTBook}) one easily  obtains the following characterisation.

\begin{theorem}\label{1051}
$$(u,v) \in \partial_{L^2(X,\nu)} \mathcal{F}_{A,1,m} \iff u,v\in L^2(X,\nu) \hbox{ and }$$
$$\hbox{there exists} \ \g \in L^\infty(X \times X, \nu \otimes m_x) \ \hbox{antisymmetric such that $\Vert \g \Vert_{L^\infty(X \times X, \nu \otimes m_x)} \leq 1$;}$$
$$v(x) = - \int_X \g(x,y) K_A(x,y) \, dm_x(y) \quad \hbox{for} \ \ \nu\hbox{-a.e.} \ x \in X;$$
and
$$\g(x,y) K_A(x,y) \in {\rm sign}(u(y) - u(x))K_A(x,y) \quad \hbox{for} \ \ (\nu \otimes m_x) \hbox{-a.e.} \ (x,y) \in X \times X.$$
\end{theorem}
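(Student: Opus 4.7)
The plan is to mirror the proof of Proposition \ref{CHarat1} (or equivalently Step~1 of Theorem \ref{0942NEW}), exploiting that $\mathcal{F}_{A,1,m}$ is convex, lower semicontinuous and positively $1$-homogeneous, so that Proposition \ref{degree1} applies. The admissible dual vector fields in this setting will be products $\g K_A$ where $\g$ ranges over antisymmetric elements of $L^\infty(X \times X, \nu \otimes m_x)$; symmetry of $K_A$ keeps $\g K_A$ antisymmetric, and whenever $\mathrm{div}_m(\g K_A) \in L^2(X,\nu)$ the Green's formula of Proposition \ref{Green} is applicable.

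The first step is to establish the weighted analogue of Proposition \ref{LSC}, namely
\begin{equation*}
\mathcal{F}_{A,1,m}(u) = \sup\Big\{-\int_X u(x)\,\mathrm{div}_m(\g K_A)(x)\,d\nu(x) : \g \text{ antisymmetric},\ \|\g\|_\infty \leq 1,\ \mathrm{div}_m(\g K_A) \in L^2(X,\nu)\Big\}.
\end{equation*}
The inequality ``$\geq$'' is immediate from Green's formula together with the pointwise bound $\g K_A \nabla u \leq K_A |\nabla u|$. For ``$\leq$'' I would fix an exhaustion $\{K_n\}$ of $X$ by sets of finite $\nu$-measure and take the antisymmetric cut-offs $\g_n(x,y) := \mathrm{sign}_0(u(y) - u(x))\,\mathbf{1}_{K_n \times K_n}(x,y)$; then $\g_n K_A$ is essentially bounded with support of finite $(\nu \otimes m_x)$-measure, so $\mathrm{div}_m(\g_n K_A) \in L^2(X,\nu)$, and the monotone convergence theorem produces the matching lower bound.

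Next, define the polar-type functional
\begin{equation*}
\Psi(v) := \inf\big\{\|\g\|_\infty : v = -\mathrm{div}_m(\g K_A),\ \g \text{ antisymmetric},\ \mathrm{div}_m(\g K_A) \in L^2(X,\nu)\big\}.
\end{equation*}
Repeating verbatim the argument of Step~1 of Theorem \ref{0942NEW}, I would show $\widetilde{\mathcal{F}_{A,1,m}} = \Psi$: one direction is a direct application of Green's formula, while the other is obtained by computing $\widetilde{\Psi}$, using the dual formula above, and invoking $\widetilde{\widetilde{\Psi}} = \Psi$ from Proposition \ref{degree1}. Proposition \ref{degree1} then characterises $v \in \partial_{L^2(X,\nu)}\mathcal{F}_{A,1,m}(u)$ as the conjunction $\Psi(v) \leq 1$ and $\int_X u v\,d\nu = \mathcal{F}_{A,1,m}(u)$. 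The first condition produces the desired antisymmetric $\g$ with $\|\g\|_\infty \leq 1$ and $v(x) = -\int_X \g(x,y) K_A(x,y)\,dm_x(y)$. Expanding the second by Green's formula gives
\begin{equation*}
\frac12\int_{X \times X} \g(x,y) K_A(x,y)(u(y) - u(x))\,d(\nu \otimes m_x) = \frac12\int_{X \times X} K_A(x,y) |u(y) - u(x)|\,d(\nu \otimes m_x),
\end{equation*}
and since $\g(x,y) K_A(x,y)(u(y)-u(x)) \leq K_A(x,y)|u(y)-u(x)|$ pointwise, equality of integrals forces equality $(\nu \otimes m_x)$-a.e., yielding the pointwise sign condition $\g(x,y) K_A(x,y) \in \mathrm{sign}(u(y)-u(x)) K_A(x,y)$.

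The principal technical obstacle I anticipate is the dual formula itself: the truncation step requires verifying that the cut-off fields $\g_n K_A$ have $L^2$-divergences (a finite-measure-support argument together with the boundedness of $m_x$ as a probability measure) and that the monotone convergence theorem indeed delivers the full $\mathcal{F}_{A,1,m}(u)$ in the limit, even though $K_A$ may vanish on large subsets of $X \times X$. Once the dual formula is secured the rest of the argument is structurally identical to the one-random-walk case, so the main effort is properly adapting the dual characterisation to the weight $K_A$.
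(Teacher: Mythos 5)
Your proposal is correct and follows essentially the route the paper intends: the paper gives no written proof here, only the remark that one works ``as in \cite{MST1}/\cite{MSTBook}'', which is precisely the weighted adaptation of Proposition \ref{LSC} and the $1$-homogeneity/duality machinery of Proposition \ref{degree1} that you describe (and that the paper itself carries out in Step~1 of Theorem \ref{0942NEW}). Your handling of the cut-off fields $\g_n K_A$ and the extraction of the pointwise sign condition from equality of the integrals matches the standard argument, so no substantive gap remains.
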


\begin{definition}{\rm We define the {\it  $m$-$1$-$A$-Laplacian operator} $\Delta_{1,A}^m$  in $[X,\mathcal{B},m,\nu]$ as
\begin{equation}\label{1SubdA}
 (u,v) \in \Delta_{1,A}^m \iff (u,-v) \in \partial_{L^2(X,\nu)} \mathcal{F}_{A,1,m}.
 \end{equation}
}
\end{definition}

We have the following characterisation of the subdifferential of $\mathcal{F}_m$.

\begin{theorem}\label{thm:abcharacterisation}
Suppose that one of the following conditions holds:
\begin{itemize}
\item[(a)] $\nu(X) < \infty$;

\item[(b)] $\nu(X) = +\infty$ and $p \geq 2$.
\end{itemize}
Then, we have
$$\partial_{L^2(X,\nu)} \left(\mathcal{F}_{A,1,m} +  \mathcal{F}_{B,p,m} \right) = \partial_{L^2(X,\nu)} \mathcal{F}_{A,1,m} + \partial_{L^2(X,\nu)} \mathcal{F}_{B,p,m} = - \Delta_{1,A}^m - \Delta_{p,B}^m.$$
Furthermore, this operator is completely accretive and has a dense domain in $L^2(X, \nu)$.
\end{theorem}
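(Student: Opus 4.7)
The plan is to follow very closely the strategy used in the proof of Theorem~\ref{0942}, since the overall structure of the problem is analogous; the main simplification here is that we work with a single measure $\nu$, so no Radon-Nikodym derivative enters the picture, but the same dichotomy between the finite-measure and infinite-measure cases will appear when establishing the requisite $L^2$ regularity. Throughout, the trivial inclusion $\partial_H \mathcal{F}_{A,1,m} + \partial_H \mathcal{F}_{B,p,m} \subseteq \partial_H(\mathcal{F}_{A,1,m} + \mathcal{F}_{B,p,m})$ is immediate from the definition of the subdifferential, so the nontrivial task is to prove the reverse inclusion and identify the sum with $-\Delta_{1,A}^m - \Delta_{p,B}^m$.

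First, I would verify that $-\Delta_{1,A}^m - \Delta_{p,B}^m$ is completely accretive in $L^2(X,\nu)$. This is done by testing against $q(u-\hat u)$ with $q \in P_0$ and using Proposition~\ref{prop:completely-accretive} together with the Green/integration-by-parts identity: the $K_A$-term reproduces the argument of \cite[Proposition 3.13]{MSTBook} adapted to the kernel $K_A$, while the $K_B$-term reproduces the monotonicity computation from the proof of Theorem~\ref{plaplac01}, applied with the symmetric weight $K_B$ inserted in the tensor measure. The density of the domain in $L^2(X,\nu)$ then follows from Proposition~\ref{A.Domain} once the equality of operators has been established, because $L^1(X,\nu) \cap L^p(X,\nu) \cap L^2(X,\nu)$ is contained in $D(\mathcal{F}_{A,1,m} + \mathcal{F}_{B,p,m})$ and is dense in $L^2(X,\nu)$.

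For the key inclusion, I would take $v \in \partial_H(\mathcal{F}_{A,1,m} + \mathcal{F}_{B,p,m})(u)$, set
\begin{equation}
\eta := v + \mathrm{div}_m\bigl(K_B |\nabla u|^{p-2} \nabla u\bigr),
\end{equation}
and test the subdifferential inequality against $u + tw$ with $t \to 0^+$ and $t \to 0^-$ for $w \in D(\mathcal{F}_{A,1,m}) \cap D(\mathcal{F}_{B,p,m})$. By dominated convergence (dominating difference quotients by $|\nabla w|$ on the $1$-term and by convexity on the $p$-term), and after integration by parts on the $p$-term, this produces the two-sided bound
\begin{equation}
\Bigl| \int_X \eta\, w \, d\nu \,- \mathcal{L}(u,w) \Bigr| \leq \frac{1}{2} \iint_{\{u(y)=u(x)\}} |\nabla w(x,y)|\, K_A(x,y)\, d(\nu \otimes m_x)(x,y),
\end{equation}
where $\mathcal{L}(u,w)$ denotes the $K_A$-weighted sign$^0$-integral. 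Taking $w = u$ gives $\int_X \eta u \, d\nu = \mathcal{F}_{A,1,m}(u)$, and testing with $w = \mathbf{1}_{A_\varepsilon}$ as in Step~2 of Theorem~\ref{0942} yields $\eta \in L^\infty(X,\nu)$ with $\|\eta\|_\infty \leq 1$.

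The most delicate point, and the main obstacle, is to upgrade $\eta$ to $L^2(X,\nu)$: in case (a), $\nu(X) < \infty$ makes this automatic from the $L^\infty$ bound; in case (b), I would reproduce Step~3(b) of Theorem~\ref{0942}, splitting $\int_X K_B |\nabla u|^{p-2} \nabla u\, dm_x$ into the regions $\{|\nabla u| \leq 1\}$ and $\{|\nabla u| \geq 1\}$, estimating the former by an $L^2$-bound via Jensen and the latter by an $L^1$-bound from $u \in D(\mathcal{F}_{B,p,m})$, noting that $0 \leq K_B \leq 1$ and $\nu \otimes m_x$ is reversible. Once $\eta \in L^2(X,\nu)$, then also ${\rm div}_m(K_B|\nabla u|^{p-2}\nabla u) \in L^2(X,\nu)$, so this object equals $\Delta_{p,B}^m u$ in the sense of our definition. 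A density argument extends the inequality $\int_X \eta w \, d\nu \leq \mathcal{F}_{A,1,m}(w)$ to all $w \in D(\mathcal{F}_{A,1,m})$, and combined with $\int_X \eta u\, d\nu = \mathcal{F}_{A,1,m}(u)$ this forces $\eta \in \partial_H \mathcal{F}_{A,1,m}(u) = -\Delta_{1,A}^m u$ by Theorem~\ref{1051}. Hence $v = \eta - \mathrm{div}_m(K_B|\nabla u|^{p-2}\nabla u) \in -\Delta_{1,A}^m u - \Delta_{p,B}^m u$, which completes the proof.
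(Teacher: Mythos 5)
Your proposal is correct and follows essentially the same route as the paper's own proof, which likewise establishes complete accretivity and density first, then proves the reverse inclusion by setting $\eta = v + \mathrm{div}_m(K_B|\nabla u|^{p-2}\nabla u)$, deriving the two-sided variational bound, obtaining $\|\eta\|_\infty \le 1$ and the identity $\int_X \eta u\, d\nu = \mathcal{F}_{A,1,m}(u)$, and upgrading to $\eta \in L^2(X,\nu)$ exactly via the dichotomy (a)/(b) borrowed from Steps 3--4 of Theorem \ref{0942}. The only cosmetic difference is that you phrase the intermediate estimate as a single absolute-value inequality rather than the paper's pair of one-sided inequalities, which is equivalent.
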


\begin{proof}
{\bf Step 1.} By the integration by parts formula, it is easy to see that the operator $- \Delta_{1,A}^m - \Delta_{p,B}^m$ is completely accretive, and the density of the domain is proved as in   Step~5  of the proof of Theorem \ref{0942}. Therefore, since as a consequence of complete accretivity we have that
$$\partial_{L^2(X,\nu)} \mathcal{F}_{A,1,m} + \partial_{L^2(X,\nu)} \mathcal{F}_{B,p,m} \subset \partial_{L^2(X,\nu)} \left(\mathcal{F}_{A,1,m} +  \mathcal{F}_{B,p,m} \right),$$
we only need to prove the reverse inclusion.

{\flushleft \bf Step 2.} Fix $v\in \partial_{L^2(X,\nu)} \left(\mathcal{F}_{A,1,m} +  \mathcal{F}_{B,p,m} \right)(u)$. Then, for any
$$w\in  \hbox{Dom}(\mathcal{F}_{A,1,m})\cap\hbox{Dom}(\mathcal{F}_{B,p,m}),$$
working as in the proof of Step 2 of Theorem \ref{0942}, we get
\begin{align}
\displaystyle\frac12 &\iint_{\{(x,y): u(y)-u(x)\neq0\}} \hbox{sign}^0(u(y)-u(x)) \, \nabla w(x,y) \, K_A(x,y) \, dm_x(y) \, d\nu(x) \\
&\qquad -\frac12\iint_{\{(x,y): u(y)-u(x)=0\}}|\nabla w(x,y)| \, K_A(x,y) \, dm_x(y) \, d\nu(x) \\
&\qquad + \frac{1}{2}\int_X\int_X \vert \nabla u(x,y)\vert^{p-2} \, \nabla u(x,y) \, \nabla w(x,y) \, K_B(x,y)\, dm_x(y) \, d\nu(x) = \int_X vw \, d\nu.
\end{align}
Now, integrating by parts, we obtain that
\begin{align}\label{19412NewI}
\displaystyle\frac12 &\iint_{\{(x,y): u(y)-u(x)\neq0\}} \hbox{sign}^0(u(y)-u(x)) \, \nabla w(x,y) \, K_A(x,y) \, dm_x(y) \, d\nu(x) \\
&\qquad -\frac12\iint_{\{(x,y): u(y)-u(x)=0\}}|\nabla w(x,y)| \, K_A(x,y) \, dm_x(y) \, d\nu(x) \\
&\qquad\qquad = \int_X \left(v + {\rm div}_m( K_B(x,y)\vert \nabla u(x,y)\vert^{p-2} \, \nabla u(x,y))\right) \,w \, d\nu.
\end{align}
Taking $w = u$ in \eqref{19412NewI}, we have
\begin{equation}\label{19412NewII}
\int_X \left(v + {\rm div}_m( K_B(x,y)\vert \nabla u(x,y)\vert^{p-2} \, \nabla u(x,y))\right) \,u \, d\nu = \mathcal{F}_{A,1,m}(u).
\end{equation}
On the other hand, by \eqref{19412NewI}, we also get
\begin{equation}\label{19412NewIII}
\int_X \left(v + {\rm div}_m( K_B(x,y)\vert \nabla u(x,y)\vert^{p-2} \, \nabla u(x,y))\right) \,w \, d\nu \leq \mathcal{F}_{A,1,m}(w).
\end{equation}
Then, we may conclude as in the proof of Theorem \ref{0942}. Indeed, from \eqref{19412NewII}, \eqref{19412NewIII} and a density argument, for any $ w\in  \hbox{Dom}(\mathcal{F}_{A,1,m})$ we have
$$ \mathcal{F}_{A,1,m}(w) -  \mathcal{F}_{A,1,m}(u) \geq \int_X \left(v + {\rm div}_m( K_B(x,y)\vert \nabla u(x,y)\vert^{p-2} \, \nabla u(x,y))\right) \,(w -u) \, d\nu.$$
Under the assumptions (a) or (b), working similarly as in Steps 3 and 4 of Theorem \ref{0942}, we have
$$ v + {\rm div}_m( K_B(x,y)\vert \nabla u(x,y)\vert^{p-2} \, \nabla u(x,y)) \in L^2(X,\nu) $$
and subsequently
$$v + {\rm div}_m( K_B(x,y)\vert \nabla u(x,y)\vert^{p-2} \, \nabla u(x,y)) \in \partial_{L^2(X,\nu)} \mathcal{F}_{A,1,m}(u),$$
so
$$v \in - \Delta_{1,A}^m - \Delta_{p,B}^m,$$
which concludes the proof.
\end{proof}

By the Brezis-K\={o}mura Theorem and the complete accretivity of the operator, as a consequence of the above theorem we have the following existence and uniqueness result.

\begin{theorem}\label{0942corNew}  Let $T>0$. For any $u_0\in L^2(X,\nu)$ and $f\in L^2(0,T;L^2(X,\nu))$, the following problem
\begin{equation}\label{ProblemAB}\left\{\begin{array}{ll}
u_t- \Delta_{1,A}^m(u) - \Delta_{p,B}^m(u) \ni f&\hbox{on } [0,T];
\\[6pt]
u(0)=u_0
\end{array}
\right.
\end{equation}
has a unique strong solution $u=u_{u_0,f}$. Moreover, we have the following comparison and contraction principle: if $u_0,\widetilde u_0\in L^2(X,\nu)$ and $f,\widetilde f\in L^2(0,T;L^2(X,\nu))$, denoting $u=u_{u_0,f}$ and $\tilde u=u_{\widetilde u_0,\widetilde f}$, we have that for any $0\le t\le T$ and any $1 \leq  q \leq \infty$
\begin{equation}
\Big\Vert \left(u(t)-\widetilde u(t)\right)^+\Big\Vert_{L^q(X,\nu)}\le \Big\Vert \left(u_0-\widetilde u_0\right)^+\Big\Vert_{L^q(X,\nu)}
+\int_0^t \Big\Vert \left(f(s)-\widetilde f(s)\right)^+\Big\Vert_{L^q(X,\nu)} ds.
\end{equation}
\end{theorem}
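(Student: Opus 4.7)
The plan is to derive this result directly from the Brezis-K\=omura theorem (Theorem~\ref{BKTheorem}) applied to the convex functional $\mathcal{F}_{A,1,m}+\mathcal{F}_{B,p,m}$, together with the structural information already collected in Theorem~\ref{thm:abcharacterisation}. Since $\mathcal{F}_{A,1,m}$ and $\mathcal{F}_{B,p,m}$ are each proper, convex and $L^2(X,\nu)$-lower semicontinuous, so is their sum; and by Theorem~\ref{thm:abcharacterisation} its subdifferential is exactly $-\Delta_{1,A}^m-\Delta_{p,B}^m$, which is completely accretive and has dense domain in $L^2(X,\nu)$. In particular $\overline{D(\partial(\mathcal{F}_{A,1,m}+\mathcal{F}_{B,p,m}))}=L^2(X,\nu)$, so any $u_0\in L^2(X,\nu)$ lies in the closure of the domain and Theorem~\ref{BKTheorem} provides a unique strong solution of~\eqref{ProblemAB}.

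For the contraction principle, I would mimic the argument in Theorem~\ref{0942cor}. Given two strong solutions $u=u_{u_0,f}$ and $\widetilde u=u_{\widetilde u_0,\widetilde f}$, subtract the equations to obtain, for a.e.\ $t$,
\begin{equation}
\frac{d}{dt}(u-\widetilde u)(t)=\big(f(t)-\widetilde f(t)\big)-\big(\xi(t)-\widetilde\xi(t)\big),
\end{equation}
with $\xi(t)\in(-\Delta_{1,A}^m-\Delta_{p,B}^m)u(t)$ and $\widetilde\xi(t)\in(-\Delta_{1,A}^m-\Delta_{p,B}^m)\widetilde u(t)$. For any $q\in P_{0}$, test this identity against $q(u(t)-\widetilde u(t))$ and integrate in $\nu$; by Proposition~\ref{prop:completely-accretive} applied with the completely accretive operator from Theorem~\ref{thm:abcharacterisation}, the term involving $\xi(t)-\widetilde\xi(t)$ is nonnegative, hence
\begin{equation}
\frac{d}{dt}\int_X j(u(t)-\widetilde u(t))\,d\nu\le \int_X q(u(t)-\widetilde u(t))\big(f(t)-\widetilde f(t)\big)\,d\nu,
\end{equation}
where $j'=q$, $j(0)=0$. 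Choosing $q$ approximating $\operatorname{sign}_0^{+}$ (so that $j\to(\cdot)^{+}$ in a monotone way) and passing to the limit through the standard completely-accretive machinery (see \cite{BCr2}), one obtains the $L^1$ contraction with positive part. The analogous choice $q_n$ concentrating its derivative near a level $k$ produces, after integration, the $L^q$ estimate~\eqref{1416I} for arbitrary $1\le q\le\infty$; in particular the $L^\infty$ estimate follows by considering $q_n$ supported on $[k,\infty)$ and letting $k\uparrow\sup(u_0-\widetilde u_0-\int(f-\widetilde f))^+$, exactly as in the proofs of~\eqref{1416} and~\eqref{1416I}.

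The only step that requires real care is the application of Proposition~\ref{prop:completely-accretive}: one has to verify that the relevant integrals make sense along the strong solution, i.e.\ that $t\mapsto\int_X j(u(t)-\widetilde u(t))\,d\nu$ is absolutely continuous with the expected derivative. This is a standard chain-rule fact for strong solutions of evolution equations governed by maximal monotone operators in Hilbert space, and it carries over verbatim from the proof of Theorem~\ref{0942cor}; no new idea is needed for the present $(A,B)$-partition setting because the structural ingredients (complete accretivity, density of domain, and identification of the subdifferential) have all been isolated in Theorem~\ref{thm:abcharacterisation}. Thus existence, uniqueness and the contraction principle follow, completing the proof.
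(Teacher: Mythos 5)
Your proposal is correct and follows exactly the route the paper takes: the paper gives no separate proof of this theorem, deriving it immediately from Theorem~\ref{thm:abcharacterisation} (identification of the subdifferential, complete accretivity, dense domain) via the Brezis--K\={o}mura theorem, with the comparison/contraction principle coming from the standard completely accretive machinery of \cite{BCr2}. Your write-up simply makes explicit the standard steps the paper leaves implicit, so no further comment is needed.
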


Regarding the asymptotic behaviour of solutions to problem \eqref{ProblemAB}, we observe that if we assume a Poincar\'e inequality of the type
\begin{equation}
c \| u \|_{L^2(X,\nu)} \leq \mathcal{F}_{A,1,m}(u) = \frac12\int_{X\times X} |u(y) - u(x)| \, K_A(x,y) \, d(\nu\otimes m_x)(x,y)
\end{equation}
for some $c > 0$, with a proof very similar to the one of Theorem \ref{thm:asymptotics2poincare1p} we recover the finite extinction time. This assumption is natural in the sense that it corresponds to coercivity of the operator $\mathcal{F}_{A,1,m}$ in the sense of \cite{BB}. However, it depends very strongly on the choice of $A$ and may be difficult to verify.   If such a condition is violated, we will see in Section \ref{sec:examples} that depending on the choice of initial data, the extinction time of solutions may be finite or infinite.  However, let us note that it is satisfied in the case when $A$ has full support, i.e., $A_x = \mbox{supp}(m_x)$, and $\mathcal{F}_{1,m^1}$ satisfies a suitable Poincar\'e inequality. 

Let us also note that different choices of the sets $A$ and $B$ in problem \eqref{ProblemAB} may lead to essentially the same evolution equation, as shown in the next example.

\begin{example}\label{z2rem}\rm

 Recall the setting of Example \ref{z2ejm} on the lattice $\mathbb{Z}^2$ and consider the following partitions of the random walk $m$.

{\flushleft (i)}   Setting
$$A_{(n,m)}=\{(n-1,m),(n+1,m), (n,m-1)\},\ B_{(n,m)}=\{(n,m+1)\},$$
the functional to study becomes
$$\begin{array}{l}
\displaystyle\mathcal{F}_m(u)=  \sum_{(m,n)\in \mathbb{Z}^2}\bigg(\frac12\Big(a|u(n-1,m)-u(n,m)| +a |u(n+1,m)-u(n,m)|
\\[16pt]
  \displaystyle \qquad\qquad\qquad\qquad\qquad
+b|u(n,m-1)-u(n,m)|\Big)
  + \frac{1}{2p} b|u(n,m+1)-u(n,m)|^p\bigg),
\end{array}$$
but we can rewrite it as
$$\begin{array}{l}
\displaystyle\mathcal{F}_m(u)=  \sum_{(m,n)\in \mathbb{Z}^2}\bigg(\frac12\Big(a|u(n-1,m)-u(n,m)| +a|u(n+1,m)-u(n,m)|
\\[16pt]
  \displaystyle \qquad\qquad\qquad\qquad\qquad
+ \frac{b}{2}|u(n,m-1)-u(n,m)|+\frac{b}{2}|u(n,m+1)-u(n,m)|
\Big)
\\[16pt]
  \displaystyle \qquad\qquad\qquad\qquad
+\frac{1}{2p}\Big( \frac{b}{2}|u(n,m-1)-u(n,m)|^p + \frac{b}{2}|u(n,m+1)-u(n,m)|^p\Big)\bigg),
\end{array}$$
for which   the gradient flow associated to it looks like a $1$-Laplacian over the whole space plus a $p$-Laplacian in the vertical direction (with slightly different coefficients).

{\flushleft (ii)}   For the choice
$$A_{(n,m)}=\{(n-1,m), (n+1,m),(n,m-1)\},\ B_{(n,m)}=\{ (n,m-1),(n,m+1)\},$$
the functional to study is
$$\begin{array}{l}
\displaystyle\mathcal{F}_m(u)=  \sum_{(m,n)\in \mathbb{Z}^2}\bigg(\frac12\Big(a|u(n-1,m)-u(n,m)| + a|u(n+1,m)-u(n,m)|
\\[16pt]
   \displaystyle \qquad\qquad\qquad\qquad\qquad + b|u(n,m-1)-u(n,m)|(n,m-1)\Big)
\\[16pt]
   \displaystyle \qquad\qquad\qquad\qquad + \frac{1}{2p}\Big( b|u(n,m-1)-u(n,m)|^p  + b|u(n,m+1)-u(n,m)|^p\Big)\bigg).
\end{array}$$
We can rewrite it as follows:
$$\begin{array}{l}
\displaystyle\mathcal{F}_m(u)=  \sum_{(m,n)\in \mathbb{Z}^2}\bigg(\frac12\Big(a|u(n-1,m)-u(n,m)| + a|u(n+1,m)-u(n,m)|
\\[16pt]
   \displaystyle \qquad\qquad\qquad\qquad\qquad +\frac{b}{2}|u(n,m-1)-u(n,m)| + \frac{b}{2}|u(n,m+1)-u(n,m)|\Big)
\\[16pt]
   \displaystyle \qquad\qquad\qquad\qquad +\frac{1}{2p}\Big( b|u(n,m-1)-u(n,m)|^p  + b|u(n,m+1)-u(n,m)|^p\Big)\bigg),
\end{array}$$
for which again   the associated gradient flow looks like a $1$-Laplacian over the whole space plus a $p$-Laplacian in the vertical direction (with slightly different coefficients).   \hfill$\blacksquare$
\end{example}

In a similar manner, we obtain the counterpart of this result in the case when the growth on the two subsets $A,B$ is of the $(q,p)$-Laplace type with $q > 1$.

\begin{theorem}
Suppose that one of the following conditions holds:
\begin{itemize}
\item[(a)] $\nu(X) < \infty$, $q \leq 2$;

\item[(b)] $\nu(X) = +\infty$ and $q \leq \frac{p}{p-1} \leq 2 \leq p$.
\end{itemize}
Then, we have
$$\partial_{L^2(X,\nu)} \left(\mathcal{F}_{A,q,m} +  \mathcal{F}_{B,p,m} \right) = \partial_{L^2(X,\nu)} \mathcal{F}_{A,q,m} + \partial_{L^2(X,\nu)} \mathcal{F}_{B,p,m} = - \Delta_{q,A}^m - \Delta_{p,B}^m.$$
Furthermore, this operator is completely accretive and has a dense domain in $L^2(X, \nu)$.
\end{theorem}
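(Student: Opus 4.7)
The plan is to mimic the proof of Theorem~\ref{thm:abcharacterisation}, replacing the $1$-Laplacian piece on $A$ by its smooth $q$-Laplacian analogue ($q>1$) and borrowing the integrability argument from Theorem~\ref{thm:qplaplace}. Since both $\mathcal{F}_{A,q,m}$ and $\mathcal{F}_{B,p,m}$ are convex, proper and lower semicontinuous on $L^2(X,\nu)$, both subdifferentials are well defined, and the inclusion
$$\partial_{L^2(X,\nu)}\mathcal{F}_{A,q,m}+\partial_{L^2(X,\nu)}\mathcal{F}_{B,p,m}\subset \partial_{L^2(X,\nu)}(\mathcal{F}_{A,q,m}+\mathcal{F}_{B,p,m})$$
is automatic from convexity. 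Complete accretivity of $-\Delta^m_{q,A}-\Delta^m_{p,B}$ is obtained by testing with $q(u-\hat u)$ for $q\in P_0$, using the integration by parts formula of Proposition~\ref{FIPartes1} against the symmetric weights $K_A$, $K_B$, and invoking the standard monotonicity inequality $(|a|^{r-2}a-|b|^{r-2}b)(a-b)\geq 0$ for $r=p,q$; then Proposition~\ref{prop:completely-accretive} applies. Density of the domain in $L^2(X,\nu)$ follows from Proposition~\ref{A.Domain} combined with the inclusion $L^q(X,\nu)\cap L^p(X,\nu)\cap L^2(X,\nu)\subset D(\mathcal{F}_{A,q,m}+\mathcal{F}_{B,p,m})$.

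The central step is the reverse inclusion. Fix $v\in\partial_{L^2(X,\nu)}(\mathcal{F}_{A,q,m}+\mathcal{F}_{B,p,m})(u)$, take $w\in \operatorname{Dom}(\mathcal{F}_{A,q,m})\cap\operatorname{Dom}(\mathcal{F}_{B,p,m})\cap L^\infty(X,\nu)$, and pass to the limit $t\to 0^\pm$ in the subdifferential inequality. Since the integrands $|u+tw-\cdot|^q$ and $|u+tw-\cdot|^p$ are smooth in $t$ for $q,p>1$, dominated convergence (with dominating functions $(|\nabla u|+|\nabla w|)^{q-1}|\nabla w|$ and $(|\nabla u|+|\nabla w|)^{p-1}|\nabla w|$) gives the equality
\begin{align}
\frac{1}{2}\iint|\nabla u|^{q-2}\nabla u\,\nabla w\,K_A\,dm_x\,d\nu &+\frac{1}{2}\iint|\nabla u|^{p-2}\nabla u\,\nabla w\,K_B\,dm_x\,d\nu \\
&=\int_X vw\,d\nu .
\end{align}
Integrating by parts the $K_B$ term via Proposition~\ref{FIPartes1} yields
$$\int_X\bigl(v+\operatorname{div}_m(K_B|\nabla u|^{p-2}\nabla u)\bigr)w\,d\nu =\frac{1}{2}\iint|\nabla u|^{q-2}\nabla u\,\nabla w\,K_A\,dm_x\,d\nu,$$
whose right-hand side is dominated by $q\mathcal{F}_{A,q,m}(w)^{1/q}\cdot\bigl(\tfrac{1}{2}\iint|\nabla u|^q K_A\,dm_x\,d\nu\bigr)^{1/q'}$ by H\"older; hence the linear functional $\eta:=v+\operatorname{div}_m(K_B|\nabla u|^{p-2}\nabla u)$ extends continuously to $L^{q}(X,\nu)$, so $\eta\in L^{q'}(X,\nu)$.

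The main technical obstacle, exactly as in Theorem~\ref{thm:qplaplace}, is promoting $\eta\in L^{q'}$ to $\eta\in L^2$. Under hypothesis (a), $\nu(X)<\infty$ and $q\leq 2$ force $q'\geq 2$, and since $L^{q'}\subset L^2$ on a finite measure space this is immediate. Under hypothesis (b), one writes
$$|\eta|\leq |v|+\int_X K_B|\nabla u|^{p-1}\,dm_x,$$
splits the last integral over $\{|\nabla u|\leq 1\}$ and $\{|\nabla u|\geq 1\}$, and uses $|\nabla u|^{p-1}\leq|\nabla u|$ on the first set (since $p\geq 2$) and $|\nabla u|^{p-1}\leq|\nabla u|^{p/q}$ on the second set (since $q\leq p/(p-1)$), producing the decomposition ${\rm I}+{\rm II}$ exactly as in Theorem~\ref{thm:qplaplace}. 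Jensen's inequality applied to each piece, together with $u\in L^2(X,\nu)\cap\operatorname{Dom}(\mathcal{F}_{B,p,m})$ and $K_B\leq 1$, gives ${\rm I}\in L^2(X,\nu)$ and ${\rm II}\in L^q(X,\nu)$; combined with $\eta\in L^{q'}(X,\nu)\cap(L^2+L^q)(X,\nu)\subset L^2(X,\nu)$, this yields the desired integrability.

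Having $\eta\in L^2(X,\nu)$, we conclude that $-\operatorname{div}_m(K_B|\nabla u|^{p-2}\nabla u)\in L^2(X,\nu)$, i.e., $\Delta_{p,B}^m u\in L^2(X,\nu)$. A standard density argument (approximating $w\in \operatorname{Dom}(\mathcal{F}_{A,q,m})$ by $L^\infty$ functions supported on sets of finite measure) upgrades the identity to
$$\int_X(v+\mu\,\Delta_{p,B}^m u)\,w\,d\nu=\frac{1}{2}\iint|\nabla u|^{q-2}\nabla u\,\nabla w\,K_A\,dm_x\,d\nu\qquad\forall\,w\in\operatorname{Dom}(\mathcal{F}_{A,q,m}),$$
which is precisely the statement that $\eta\in\partial_{L^2(X,\nu)}\mathcal{F}_{A,q,m}(u)=-\Delta_{q,A}^m u$ by Theorem~\ref{plaplac01B} adapted to the $K_A$ weight. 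Therefore $v=-\Delta_{q,A}^m u-\Delta_{p,B}^m u$ and the desired equality of subdifferentials holds.
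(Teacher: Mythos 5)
Your proposal is correct and follows exactly the route the paper intends for this statement, which it leaves as a sketch deferring to Theorem \ref{thm:abcharacterisation} for the partition structure and to Theorem \ref{thm:qplaplace} for the $L^{q'}\to L^2$ integrability upgrade of $\eta$ under hypotheses (a) and (b). The only blemish is the spurious factor $\mu$ in your final display: in the partition setting there is a single measure $\nu$ and no Radon--Nikodym density, so the identity should read $\int_X(v+\Delta_{p,B}^m u)\,w\,d\nu$.
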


We immediately obtain the corresponding existence and uniqueness result.

\section{Examples}\label{sec:examples}

\subsection{Weighted Graphs}

We now give several examples of explicit solutions to particular cases of problems \eqref{eq:Cauchy1p} and \eqref{ProblemAB} in  finite weighted graphs in order to illustrate the results obtained. We will study cases of the parabolic equations
\begin{equation}\label{eq:Cauchy1penex}
u_t = \Delta_1^{m} (u) + \Delta_2^{m} (u),
\end{equation}
\begin{equation}\label{eq:Cauchy1penex2rw}
u_t = \Delta_1^{m_1} (u) + \mu\Delta_2^{m_2} (u),
\end{equation}
where $\mu=\frac{d\nu_2}{d\nu_1}$,
and
\begin{equation}\label{ProblemABenex} u_t = \Delta_{1,A}^m(u) + \Delta_{2,B}^m(u),
\end{equation}
which will become systems of linear ordinary differential equations after dealing with the  particular difficulty due to the presence of the $1$-Laplacian term; the coefficients of the system may change with time depending on the sign of the nonlocal gradient of $u$ between two given points. In the first example, we consider the problem associated to~\eqref{eq:Cauchy1penex} on a linear graph with three points.

\begin{example}\label{ex3points}{\rm
 Consider a linear graph $G=(V,E)$ with three vertices $V = \{1,2,3\}$, two edges $E = \{(1,2), (2,3)\}$, and with positive weights
$$w_{1,2} =a,\quad  w_{2,3}=b. $$ The graph is shown in Figure~\ref{fig02_3points}.
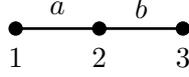
\begin{figure}[ht]
\centering
\begin{tikzpicture}[line cap=round,line join=round,>=triangle 45,x=1.1cm,y=1.1cm]
\draw [line width=1pt] (-2,0)-- (0,0);
\begin{scriptsize}
\draw [fill=black] (-2,0) circle (2.5pt);
\draw[color=black] (-2,-0.35) node {\large $1$};
\draw [fill=black] (-1,0) circle (2.5pt);
\draw[color=black] (-1,-0.35) node {\large $2$};
\draw[color=black] (-1.5,0.25) node {\large $a$};
\draw [fill=black] (0,0) circle (2.5pt);
\draw[color=black] (0,-0.35) node {\large $3$};
\draw[color=black] (-0.5,0.25) node {\large $b$};
\end{scriptsize}
\end{tikzpicture}
\caption{Graph in Example~\ref{ex3points}}\label{fig02_3points}
\end{figure}

We have $$ \nu(\{ 1 \}) = a, \quad \nu(\{ 2 \}) = a+b, \quad \nu(\{ 3 \}) = b,$$
and the random walk $m$ is given by
$$m_1 =  \delta_2 , \quad m_2 = \frac{a}{a+b}  \delta_1 +  \frac{b}{a+b}  \delta_3,\quad m_3 =\delta_2.$$
Then, this weighted graph is a   random  walk space as in Example~\ref{example.graphs}. Consider the evolution problem
$$u_t=\Delta_1^mu+\Delta_2^mu\quad\hbox{in } V$$
with the initial condition
$$u(0)=c\1_{\{1 \}}.$$
Let us call $x(t):=u(1,t)$, $y(t)=u(2,t)$ and $z(t)=u(3,t)$. Then, the above equation can be written as the following system of ODEs
\begin{equation}\label{masterE}\left\{\begin{array}{l}
x'(t)= \g_t(1,2) + y(t) - x(t); \\ \\
y'(t)= - \frac{a}{a+b} \g_t(1,2) + \frac{b}{a+b} \g_t(2,3)+\frac{a}{a+b}(x(t)-y(t)) + \frac{b}{a+b}(z(t) - y(t));\\ \\
z'(t)= -\g_t(2,3) + y(t) - z(t)
\end{array}\right.
\end{equation}
for antisymmetric functions $ \g_t(1,2), \g_t(2,3)$ satisfying
$$ \g_t(1,2) \in \hbox{sign}(y(t) - x(t)), \quad \g_t(2,3) \in\hbox{sign}(z(t) - y(t))$$
with the initial condition
$$x(0) = c, \quad y(0) = 0, \quad z(0) = 0.$$
We present the behaviour of this system in the following three special cases corresponding to different choices of weights and initial data.

{\flushleft Case A.}  We fix the weights $a=b=1$, and for the initial datum  we take $c=1$. We claim that, up to a time $t_1$, $x(t)>y(t)$ and  $y(t) = z(t)$; at $t=t_1$ we have that $x(t_1) = y(t_1)$, moment in which the value of $\g_t(1,2)$ may differ from $-1$. Then, problem \eqref{masterE} is reduced to the system of ODEs
$$\left\{\begin{array}{l}
x'(t)=-1+y(t)-x(t); \\ \\
y'(t)=\frac12+\frac12 \g_t(2,3) + \frac{1}{2}(x(t)-y(t)); \\ \\
z'(t)= - \g_t(2,3) \in\hbox{sign}(0)
\end{array}\right.
$$
with $y(t)=z(t)$, joint to the initial condition
$$x(0) = 1, \quad y(0) = 0, \quad z(0) = 0.$$
Now, since $y'(t) = z'(t) = - \g_t(2,3)$, the above system is reduced to
$$\left\{\begin{array}{l}
x'(t)=-1+y(t)-x(t);\\ \\
\frac32 y'(t)=\frac12+\frac{1}{2}(x(t)-y(t)) \\ \\
\end{array}\right.
$$
joint to the initial condition
$$x(0) = 1, \quad y(0) = 0, \quad z(0) = 0,$$
whose only solution is given by
$$  x(t)=\frac32 e^{-\frac43 t} -\frac12  ,\
y(t)=z(t)=-\frac12 e^{-\frac43 t} +\frac12.$$
Moreover,
$$ \g_t(2,3) = -z'(t) = -\frac23 e^{-\frac43 t} \in \hbox{sign}(0) = [-1,1].$$
 These formulas are valid until time $t_1 = \frac34 \, \log 2 \simeq 0.51986$, at which point
$$x(t_1) = y(t_1) = z(t_1) = \frac14.$$
Therefore, by uniqueness of the solution, the above is the solution of \eqref{masterE} for $0 \leq t \leq t_1$  (in particular, the assumption that $y(t) = z(t)$ was not restrictive). After that time, the solution is given by
$$x(t) = y(t) = z(t) = \frac14,$$
see Figure~\ref{ex3pointsplot}.   Note that $\frac14$ is the mean of the initial data with respect to the invariant measure $\nu$; by Lemma \ref{ConsMass}, this property holds also in the subsequent examples.

 \begin{figure}[ht]
\centering
\begin{tikzpicture}
    \begin{axis}[
        xlabel={}, ylabel={},
        domain=0:0.75, 
        samples=1000, 
        axis lines=middle, 
        axis line style={-},
        legend pos=north west, 
        width=8cm, 
        height=6cm, 
        ymin=0, ymax=1.1, 
        xtick={0,0.1, 0.2,0.3,0.4,0.5, 0.6, 0.7,0.8}, 
        ytick={0,0.2, 0.4,0.6, 0.8, 1,1.2}, 
        xticklabel style={font=\tiny},
        yticklabel style={font=\tiny},
        minor tick num=4,
    ]
    \addplot[black,thick, restrict x to domain=0:0.52] {(3/2)*exp(-4*x/3)-(1/2)};

    \addplot[black, thick,dashed,restrict x to domain=0:0.52] {-(1/2)*exp(-4*x/3)+1/2};
    \addplot[black,  thick] coordinates {(0.52, 1/4) (0.75, 1/4)};
    \end{axis}
\end{tikzpicture}
\caption{Example~\ref{ex3points}, case A. $a=b=1$, $c=1$. $x(t)$ continuous line; $y(t)=z(t)$ dashed line. After $t\approx 0.51986$,   $x(t)=y(t)=z(t)$.}
\label{ex3pointsplot}
\end{figure}
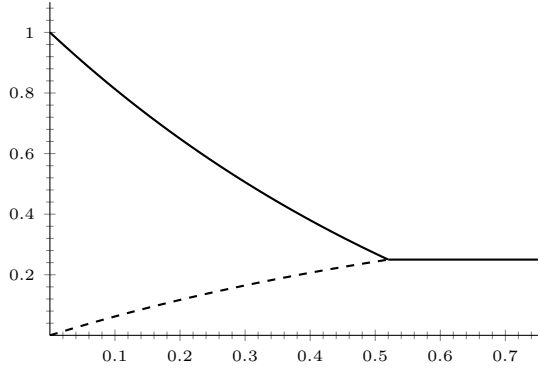

{\flushleft Case B.} The picture changes for the same weights  $a = b = 1$ if we take a larger initial datum with $c = 10$. In this case, we claim that until a time $t_1 > 0$ at which $y(t_1) = z(t_1)$, we have $x(t) > y(t) > z(t)$. This reduces problem \eqref{masterE}  to the system of ODEs
 $$\left\{\begin{array}{l}
  x'(t)=-1+y(t)-x(t);\\ \\
 y'(t)=\frac{1}{2}(x(t)-y(t)) + \frac{1}{2}(z(t)-y(t));\\ \\
 z'(t)= 1 + y(t) - z(t)
 \end{array}\right.
 $$
joint to the initial condition
$$x(0) = 10, \quad y(0) = 0, \quad z(0) = 0,$$
whose only solution is given by
$$  x(t)=\frac32 + \frac52 e^{-2t} + 6 e^{-t} ,\ \
y(t) = \frac52 -\frac52 e^{-2t}, \ \ z(t) = \frac72+ \frac52 e^{-2t} - 6 e^{-t}.$$
For $t_1 = \log 5 \approx 1.609438$,
$$y(t_1)  =  z(t_1) = \frac{12}{5}.$$
Again by uniqueness, the above is the solution of \eqref{masterE} for $0 \leq t \leq t_1$  (see Figure~\ref{ex3points04}).
 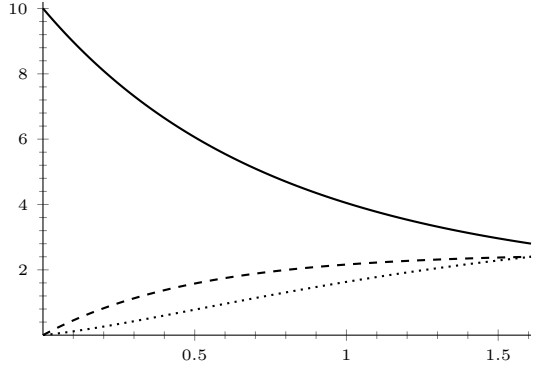
\begin{figure}[ht]
\centering
\begin{tikzpicture}
    \begin{axis}[
        xlabel={}, ylabel={},
        domain=0.:1.65, 
        samples=1000, 
        axis lines=middle, 
        axis line style={-},
        legend pos=north west, 
        width=8cm, 
        height=6cm, 
        ymin=0, ymax=10.2, 
        xtick={0,0.5, 1,1.5,2}, 
        ytick={0,2,4,6,8,10}, 
        xticklabel style={font=\tiny},
        yticklabel style={font=\tiny},
        minor tick num=4,
    ]
    \addplot[black,thick, restrict x to domain=0:1.61] {3/2 + (5/2)*exp(-2*x) + 6*exp(-x)};

    \addplot[black, thick,dashed,restrict x to domain=0:1.61] {5/2 -(5/2)*exp(-2*x)};
    \addplot[black, thick,dotted,restrict x to domain=0:1.61] {7/2+ (5/2)*exp(-2*x) - 6*exp(-x)};
    \end{axis}
\end{tikzpicture}
\caption{Example~\ref{ex3points} case B, $a=1$, $b=1$, $c=10$. $x(t)$ continuous line; $y(t)$ dashed line; $z(t)$ dotted line; $0\le t\lesssim 1.609438$.}
\label{ex3points04}
\end{figure}

Let us see how the solution continues. We have that
$$x(t_1) = \frac{14}{5}.$$
We now rewrite the problem \eqref{masterE} after the time $t_1$. We claim that, up to a time $t_2>t_1$, $x(t) > y(t)$ and  $y(t) = z(t)$. The time $t_2$ will be the moment at which $x=y$.
By comparing the expressions for $y'(t)$ and $z'(t)$,   we see that  $\g_t(2,3) = -y'(t).$
Therefore, the system~\eqref{masterE} takes the form
\begin{equation}
\left\{\begin{array}{l}
x'(t)= -1 + y(t) - x(t); \\ \\
y'(t)= \frac{1}{3}+ \frac{1}{3}(x(t) - y(t))
\end{array}\right.
\end{equation}
with the initial condition
$$x(t_1) = \frac{14}{5}, \quad y(t_1) = \frac{12}{5}.$$
The unique solution is given by
\begin{equation}
x(t) = \frac{7}{4} + \frac{21 \sqrt[3]{5}}{4} e^{-\frac{4}{3} t}, \quad y(t) = \frac{11}{4} - \frac{7 \sqrt[3]{5}}{4} e^{-\frac{4}{3} t}
\end{equation}
  until the time  $t_2 = \frac34\log(7\sqrt[3]{5}) \approx 1.861792$,   at which
$$x(t_2) = y(t_2)=z(t_2)=\frac52.$$
Moreover, it is easy to see that
\begin{equation}
\g_t(2,3) = -y'(t) \in \hbox{sign}(0) = [-1,1].
\end{equation}
Again by uniqueness, the assumption that $y(t) = z(t)$ was not restrictive, and the above is the solution of \eqref{masterE} for $t_1 \leq t \leq t_2$  (see Figure~\ref{ex3points05}). After this time, for $t \geq t_2$ the solution is constant and equal to
$$x(t)=y(t)=z(t)= \frac{5}{2}.$$
\begin{figure}[ht]
\centering
\begin{tikzpicture}
    \begin{axis}[
        xlabel={}, ylabel={},
        domain=1.61:2, 
        samples=1000, 
        axis lines=middle, 
        axis line style={-},
        legend pos=north west, 
        width=6cm, 
        height=4cm, 
        ymin=2, ymax=2.9, 
        xtick={1.861792}, 
        ytick={2.0,2.2,2.4,2.6,2.8,3}, 
        xticklabel style={font=\tiny},
        yticklabel style={font=\tiny},
        minor tick num=4,
        xticklabel={\pgfmathprintnumber[fixed, precision=6]{\tick}},
    ]
    \addplot[black,thick, restrict x to domain=1.61:1.86] {7/4 + ((21*(5^(1/3)))/4)*exp(-(4/3)*x)};

    \addplot[black, thick,dashed,restrict x to domain=1.61:1.86] {11/4 - ((7*(5^(1/3)))/4)*exp(-(4/3)*x)};
    \addplot[black,  thick] coordinates {(1.86, 2.5) (2, 2.5)};
    \end{axis}
\end{tikzpicture}
\caption{Example~\ref{ex3points}, case B. $a=1$, $b=1$, $c=10$. $x(t)$ continuous line; $y(t)=z(t)$ dashed line; $t\gtrsim 1.609438$. After $t\approx 1.861792$,  $x(t)=y(t)=z(t).$}
\label{ex3points05}
\end{figure}
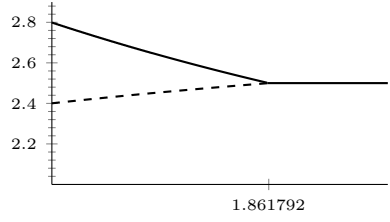

{\flushleft Case C.} If we change the weights to $a=10$, $b=1$, and we consider again $c=1$, then  problem \eqref{masterE} is reduced to the system of ODEs
\begin{equation}\label{CaseC}\left\{\begin{array}{l}
  x'(t)= \g_t(1,2) + (y(t)-x(t));  \\ \\
 y'(t)= -\frac{10}{11} \g_t(1,2) +\frac{1}{11} \g_t(2,3)+\frac{10}{11}(x(t)-y(t)) + \frac{1}{11} (z(t) - y(t));    \\ \\
 z'(t)= -\g_t(2,3) + y(t) - z(t),
 \end{array}\right.
\end{equation}
where
$$ \g_t(1,2) \in \hbox{sign}(y(t) - x(t)) \quad \mbox{and} \quad \g_t(2,3) \in\hbox{sign}(z(t) - y(t)),$$
with the initial condition
$$x(0) = 1, \quad y(0) = 0, \quad z(0) = 0.$$
In this case, we claim that, up to a time $t_1$ at which $x(t_1) = y(t_1)$, we have $x(t) > y(t) > z(t)$. This reduces problem \eqref{CaseC} to the system of ODEs
$$\left\{\begin{array}{l}
x'(t)=-1+y(t)-x(t); \\ \\
y'(t)= \frac{10}{11} -\frac{1}{11}   +\frac{10}{11}(x(t)-y(t)) +  \frac{1}{11} (z(t)-y(t));\\ \\
z'(t)= 1 + y(t) - z(t),
\end{array}\right.
$$
joint to the initial condition
$$x(0) = 1, \quad y(0) = 0, \quad z(0) = 0,$$
whose only solution is given by
$$  x(t)= -\frac{3}{22} + \frac{19}{22}e^{-2t}+\frac{6}{22} e^{-t} ,\ \
y(t) = \frac{19}{22}-\frac{19}{22}e^{-2t}, \ \ z(t) =  \frac{41}{22} + \frac{19}{22}  e^{-2t}- \frac{60}{22}  e^{-t}.$$
For $t_1=\log((3 + 13 \sqrt{5})/22) \approx 0.376844$ we have that
$$x(t_1) =    y(t_1)=\frac{295 + 39 \sqrt{5}}{836} \approx 0.457185.$$
Again, by uniqueness,   the assumption that $x(t) > y(t) > z(t)$ was not restrictive and the above  is the only solution of problem \eqref{CaseC} for $0 \leq t \leq t_1$ (see Figure~\ref{ex3points02}).
\begin{figure}[ht]
\centering
\begin{tikzpicture}
    \begin{axis}[
        xlabel={}, ylabel={},
        domain=0.:0.4, 
        samples=1000, 
        axis lines=middle, 
        axis line style={-},
        legend pos=north west, 
        width=8cm, 
        height=6cm, 
        ymin=0, ymax=1.05, 
        xtick={0,0.05,0.10,0.15,0.20,0.25,0.30,0.35,0.40}, 
        ytick={0,0.2,0.4,0.6,0.8,1.0,1.2}, 
        xticklabel style={font=\tiny},
        yticklabel style={font=\tiny},
        minor tick num=4,
        scaled x ticks=false, 
         xticklabel={\pgfmathprintnumber[fixed]{\tick}},
    ]
    \addplot[black,thick, restrict x to domain=0:0.377] {-3/22  + (19/22)*exp(-2*x)+(6/22)*exp(-x)};

    \addplot[black, thick,dashed,restrict x to domain=0:0.377] {(19/22)-(19/22)*exp(-2*x)};
    \addplot[black, thick,dotted,restrict x to domain=0:0.377] {41/22 + (19/22)*exp(-2*x)- (60/22)*exp(-x)};
    \end{axis}
\end{tikzpicture}
\caption{Example~\ref{ex3points}, case C. $a=10$, $b=1$, $c=1$. $x(t)$ continuous line; $y(t)$ dashed line; $z(t)$ dotted line; $0\le t\lesssim 0.376844$. }
\label{ex3points02}
\end{figure}
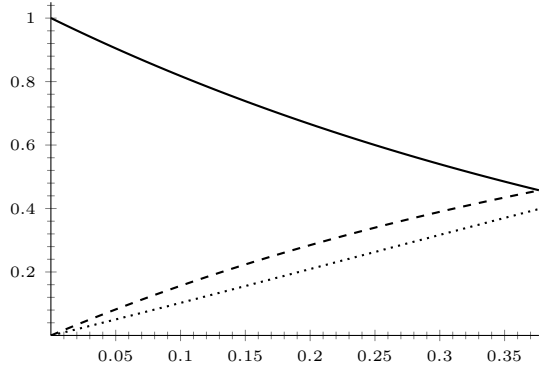

Let us see how the solution continues. We have that
$$ z(t_1)=\frac{2165 - 819\sqrt{5}}{836}  \approx 0.399115.$$
We now claim that, up to a time $t_2>t_1$,   it holds that  $x(t) = y(t)$ and  $y(t) > z(t)$;  the time $t_2$ will be the moment at which $x(t) = y(t)$. Therefore, for $t \geq t_1$, we consider the system of ODEs
\begin{equation}\label{CaseCNew}\left\{\begin{array}{l}
x'(t)= \g_t(1,2), \quad \g_t(1,2) \in \hbox{sign}(0); \\ \\
y'(t)= -\frac{10}{11} \g_t(1,2) -\frac{1}{11} + \frac{1}{11} (z(t) - y(t));\\ \\
z'(t)=1+ y(t) - z(t)
\end{array}\right.
\end{equation}
with the initial condition
$$x(t_1) =\frac{295 + 39 \sqrt{5}}{836}, \quad y(t_1) =\frac{295 + 39 \sqrt{5}}{836}, \quad z(t_1) =\frac{2165 - 819\sqrt{5}}{836}.$$
By using that $\g_t(1,2)$ should be equal to $y'(t)$, the system \eqref{CaseCNew} comes down to
$$  \left\{\begin{array}{l} y'(t) = - \frac{1}{21}+\frac{1}{21}(z(t) -y(t)); \\[10pt] z'(t) = 1 + (y(t) -  z(t)),\end{array}\right. $$
whose only solution is given by
\begin{equation}
y(t) = \frac{9}{22} + \frac{1}{484} (63 - 13 \sqrt{5}) \sqrt[21]{\frac{3 + 13 \sqrt{5}}{22}}  e^{- \frac{22}{21} t} \approx 0.409091 + 0.071375 e^{-\frac{22}{21}t}
\end{equation}
and
\begin{equation}
z(t) = \frac{31}{22} + \frac{21}{484} (13 \sqrt{5} - 63) \sqrt[21]{\frac{3 + 13 \sqrt{5}}{22}} e^{-\frac{22}{21} t} \approx 1.40909 - 1.49888 e^{-\frac{22}{21} t}.
\end{equation}
Now, if
\begin{equation}
t_2 = \frac{1}{22} \bigg( 20 \log(22) - 21 \log \bigg( \frac{484}{63 - 13 \sqrt{5}} \bigg) + \log(3 + 13 \sqrt{5}) \bigg) \approx 0.430724,
\end{equation}
we have
$$ x(t_2) = y(t_2) = z(t_2) = \frac{5}{11} \approx 0.454545,$$
and for $t_1\le t\le t_2$
$$\g_t(1,2)=y'(t)\in \hbox{sign}(0)=[-1,1].$$
Again, by uniqueness, the above is the unique solution to problem \eqref{CaseC} for $t_1 \leq t \leq t_2$.   Finally, for $t \geq t_2$,  the solution is constant and equal to $\frac{5}{11}$ (see Figure~\ref{ex3pointsfinal01}).

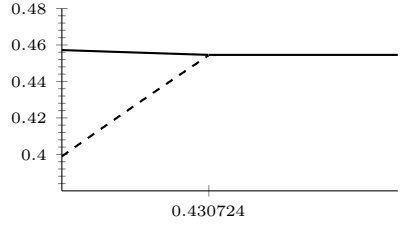
\begin{figure}[ht]
\centering
\begin{tikzpicture}
    \begin{axis}[
        xlabel={}, ylabel={},
        domain=0.376844:0.5, 
        samples=1000, 
        axis lines=middle, 
        axis line style={-},
        legend pos=north west, 
        width=6cm, 
        height=4cm, 
        ymin=0.38, ymax=0.48, 
        xtick={0.430724}, 
        ytick={ }, 
        xticklabel style={font=\tiny},
        yticklabel style={font=\tiny},
        minor tick num=4,
        xticklabel={\pgfmathprintnumber[fixed, precision=6]{\tick}},
    ]
    \addplot[black,thick, restrict x to domain=0.376844:0.430724] {0.409091 + 0.071375*exp(-(22/21)*x)};

    \addplot[black, thick,dashed,restrict x to domain=0.376844:0.430724] {1.40909 - 1.49888*exp(-(22/21)*x)};
    \addplot[black,  thick] coordinates {(0.430724, 5/11) (0.5, 5/11)};
    \end{axis}
\end{tikzpicture}
\caption{Example~\ref{ex3points}, case C. $a=10$, $b=1$, $c=1$. $x(t)=y(t)$ continuous line;  $z(t)$ dotted line; $t\gtrsim 0.376844$. After $t\approx 0.430724$,  $x(t)=y(t)=z(t).$  }
\label{ex3pointsfinal01}
\end{figure}
\hfill $\blacksquare$}
\end{example}

\begin{remark}\rm
Let us observe that in Case A of the above example, a signal  (the value of the function $u$)  equal to $1$ in the 1-vertex is diffused to the other  vertices (where the signal is null) in such a way that the signal increases in both   vertices  in the same quantity, even   though they are  not equally connected to the 1-vertex. This also happens if we make {\it stronger} the connection between the vertices $1$ and $2$ {\it for the $1$-Laplacian diffusion} (see the next Example~\ref{newex01}).

The above effect changes if the signal in the 1-vertex is much larger, as we see in Case B. Now this signal is   again  diffused to the other two vertices, but it is larger in the vertex nearest to the 1-vertex during a period of time (up to the moment in which signals at vertices $2$ and $3$ are equal).

In Case C, we see that the same initial signal as in the Case A, but with a {\it strong connection} between the vertex nearest to the 1-vertex, is diffused to the other vertices in such a way that it is larger in this nearest vertex (with a behaviour different to the Case B, now the signals at the vertices $1$ and $2$ will be equal first). This also happens even if we only make {\it stronger} the connection   between the vertices $1$ and $2$ {\it for the Laplacian diffusion} (see the next Example~\ref{newex01}).
\end{remark}

In the next example,   we consider the same graph as previously, but with two cases of different random walks for the two operators in problem~\eqref{eq:Cauchy1penex2rw}.

\begin{example}\label{newex01}\rm

\begin{figure}[ht]
\centering

\begin{tikzpicture}[line cap=round,line join=round,>=triangle 45,x=1.1cm,y=1.1cm]
\draw [line width=1pt] (-2,0)-- (0,0);
\begin{scriptsize}
\draw [fill=black] (-2,0) circle (2.5pt);
\draw[color=black] (-2,-0.35) node {\large $1$};
\draw [fill=black] (-1,0) circle (2.5pt);
\draw[color=black] (-1,-0.35) node {\large $2$};
\draw[color=black] (-1.5,0.25) node {$1$};
\draw[color=black] (-1.5,-0.25) node {$10$};
\draw [fill=black] (0,0) circle (2.5pt);
\draw[color=black] (0,-0.35) node {\large $3$};
\draw[color=black] (-0.5,0.25) node {$1$};
\draw[color=black] (-0.5,-0.25) node {$1$};
\end{scriptsize}
\end{tikzpicture}
\caption{Graph in Example~\ref{newex01}, Case A. Below the edges, the weights for $ m^1$ (corresponding to the $1$-Laplacian); above the edges, the weights for $ m^2$ (corresponding to the Laplacian).}
\label{fig02_3points2rw}
\end{figure}
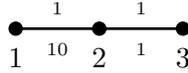

Case A. Consider the linear graph $G=(V,E)$ with weights shown in Figure~\ref{fig02_3points2rw}. Then, the random walks are given by
$$ m^1_{1} =  \delta_2 , \quad m^1_{2} = \frac{10}{11} \delta_1 +  \frac{1}{11} \delta_3,\quad m^1_{3} =\delta_2,$$
for which
$$ \nu_1(\{ 1 \}) = 10, \quad \nu_1(\{ 2 \}) = 11, \quad \nu_1(\{ 3 \}) = 1;$$
and
$$ m^2_{1} =  \delta_2 , \quad m^2_{2} =\frac12 \delta_1 +  \frac12  \delta_3,\quad m^2_{3} =\delta_2,$$
for which
$$ \nu_2(\{ 1 \}) = 1, \quad \nu_2(\{ 2 \}) = 2, \quad \nu_2(\{ 3 \}) = 1.$$
We now have that
$$ \mu(\{ 1 \}) = \frac{1}{10}, \quad \mu(\{ 2 \}) = \frac{2}{11}, \quad \mu(\{ 3 \}) = 1.$$
Let us call $x(t):=u(1,t)$, $y(t)=u(2,t)$ and $z(t)=u(3,t)$. Then,  the evolution problem
$$u_t = \Delta_1^{m_1} (u) + \mu\Delta_2^{m_2} (u)$$
can be written as the following system of ODEs
 \begin{equation}\label{masterECasoA}\left\{\begin{array}{l}
  x'(t)=\g_t(1,2)+ \frac{1}{10}(y(t)-x(t)); \\ \\
 y'(t)= - \frac{10}{11}  \g_t(1,2)+\frac{1}{11}\g_t(2,3)+\frac{2}{11}\left(\frac{1}{2}(x(t)-y(t)) + \frac{1}{2}(z(t) - y(t)) \right);\\ \\
 z'(t)=-  \g_t(2,3)+ y(t) - z(t)
 \end{array}\right.
\end{equation}
 for antisymmetric functions $\g_t(1,2), \g_t(2,3)$ satisfying
$$\g_t(1,2) \in \hbox{sign}(y(t) - x(t)), \quad \g_t(2,3)\in\hbox{sign}(z(t) - y(t)).$$
If we consider the initial condition $$u(0)=\1_{\{1 \}}, $$
as in the other examples, we have that there exists a time $t_1$ such that $x(t) > y(t) = z(t)$ for $0\leq t \leq t_1$. Then, \eqref{masterECasoA} can be written as
\begin{equation}
\left\{\begin{array}{l}
x'(t)= -1 + \frac{1}{10}(y(t) - x(t)); \\ \\
\frac{12}{11} y'(t)= \frac{10}{11}+ \frac{1}{11}(x(t) - y(t));\\ \\
z(t)=y(t),
\end{array}\right.
\end{equation}
with $$x(0)=1,\quad y(0)=z(0)=0,$$
and the solution is given by
$$x(t) = -5 + 6 e^{-\frac{11}{60}t}$$
and
$$y(t)=z(t)  = 5 - 5 e^{-\frac{11}{60}t}.$$
It is valid until the time $t_1=\frac{60}{11}\log{\frac{11}{10}}\approx 0.519874$. For $t \geq t_1$,  the solution is constant:
$$x(t)=y(t)=z(t)=\frac{5}{11}\approx 0.454545.$$

\begin{figure}[ht]
\centering\begin{tikzpicture}[line cap=round,line join=round,>=triangle 45,x=1.1cm,y=1.1cm]
\draw [line width=1pt] (-2,0)-- (0,0);
\begin{scriptsize}
\draw [fill=black] (-2,0) circle (2.5pt);
\draw[color=black] (-2,-0.35) node {\large $1$};
\draw [fill=black] (-1,0) circle (2.5pt);
\draw[color=black] (-1,-0.35) node {\large $2$};
\draw[color=black] (-1.5,0.25) node {$10$};
\draw[color=black] (-1.5,-0.25) node {$1$};
\draw [fill=black] (0,0) circle (2.5pt);
\draw[color=black] (0,-0.35) node {\large $3$};
\draw[color=black] (-0.5,0.25) node {$1$};
\draw[color=black] (-0.5,-0.25) node {$1$};
\end{scriptsize}
\end{tikzpicture}
\caption{Graph in Example~\ref{newex01}, Case B. Below the edges, the weights for $m^1$ (corresponding to the $1$-Laplacian); above the edges, the weights for $m^2$ (corresponding to the Laplacian).}
\label{fig02_3points2rwchanged}
\end{figure}
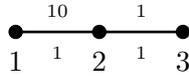

{\flushleft Case B.} Consider now the linear graph $G=(V,E)$ with weights shown in Figure~\ref{fig02_3points2rwchanged}. Then, the random walks are given by
$$ m^1_{1} =  \delta_2 , \quad m^1_{2} = \frac12 \delta_1 +  \frac12  \delta_3,\quad m^1_{3} =\delta_2,$$
for which
$$ \nu_1(\{ 1 \}) = 1, \quad \nu_1(\{ 2 \}) = 2, \quad \nu_1(\{ 3 \}) = 1;$$
and
$$m^2_{1} =  \delta_2 , \quad m^2_{2} =\frac{10}{11} \delta_1 +  \frac{1}{11} \delta_3,\quad m^2_{3} =\delta_2,$$
for which
$$ \nu_2(\{ 1 \}) = 10, \quad \nu_2(\{ 2 \}) = 11, \quad \nu_2(\{ 3 \}) = 1.$$
We have that
$$ \mu(\{ 1 \}) = 10, \quad \mu(\{ 2 \}) = \frac{11}{2}, \quad \mu(\{ 3 \}) = 1.$$
Now, at a first stage, between $0$ and $t_1\approx 0.187904$, the evolution problem
$$u_t = \Delta_1^{m_1} (u) + \mu\Delta_2^{m_2} (u)$$
with initial condition $$u(0)=\1_{\{1 \}},$$
is governed by
\begin{equation}
\left\{\begin{array}{l}
x'(t)= -1 + 10(y(t) - x(t)); \\ \\
 y'(t)= 5(x(t) - y(t))+\frac{1}{2}(z(t)-y(t);\\ \\
z'(t)=1+y(t)-z(t),
\end{array}\right.
\end{equation}
with $$x(0)=1,\quad y(0)=z(0)=0,$$
and the solution is given by
$$x(t)=-\frac{3}{40} + \left(\frac{43}{80}+\frac{341}{80\sqrt{769}}\right) e^{-\frac14 (33 + \sqrt{769}) t} +
 \left(\frac{43}{80}-\frac{341}{80\sqrt{769}}\right) e^{-\frac14 (33 - \sqrt{769}) t},$$
$$y(t)=\frac{1}{40} - \left(\frac{1}{80}+\frac{767}{80\sqrt{769}}\right) e^{-\frac14 (33 + \sqrt{769}) t} +
 \left(-\frac{1}{80}+\frac{767}{80\sqrt{769}}\right) e^{-\frac14 (33 - \sqrt{769}) t},$$
 $$z(t)=\frac{41}{40} + \left(-\frac{41}{80}+\frac{1193}{80\sqrt{769}}\right) e^{-\frac14 (33 + \sqrt{769}) t} -
 \left(\frac{41}{80}+\frac{1193}{80\sqrt{769}}\right) e^{-\frac14 (33 - \sqrt{769}) t}.$$
 At the time $t_1\approx 0.187904$, we have that $x(t_1)=y(t_1)\approx 0.264504$ and $z(t_1)\approx 0.206487$,  and the evolution continues attending a new system of ODEs.
\hfill $\blacksquare$
\end{example}

The next example concerns the behaviour of the solution to the problem associated to~\eqref{ProblemABenex} on a four-point graph.

\begin{example}\label{Ex1}{\rm Consider the graph $G=(V,E)$  with vertices $V = \{1,2,3,4\}$
and edges $E = \{(1,4), (1,2), (2,3), (3,4) \}$. To these edges, we assign the positive weights
$$w_{1,2} = a, \ w_{2,3} = b, \ w_{3,4} = c, \ w_{4,1} = d. $$
The graph is shown in Figure~\ref{fig:graphfourpoints}.   The invariant measure  $\nu$ is
$$ \nu(\{ 1 \}) = a + d, \quad \nu(\{ 2 \}) = a+ b, \quad \nu(\{ 3 \}) =b + c, \quad \nu(\{ 4 \}) = c + d,$$
and the random walk $m$ is given by
$$m_1 = \frac{a}{a+d}  \delta_2 + \frac{d}{a+d} \delta_4, \quad m_2 = \frac{a}{a+b}  \delta_1 + \frac{b}{a+b} \delta_3,$$ $$ m_3 = \frac{b}{b+c}  \delta_2 + \frac{c}{b+c} \delta_4, \quad  m_4 = \frac{c}{c+d}  \delta_3 + \frac{d}{c+d} \delta_1.$$
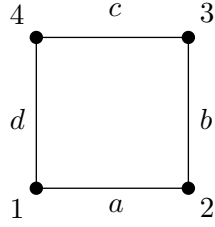
\begin{figure}[ht]
\begin{tikzpicture}[line cap=round,line join=round,,x=1.0cm,y=1.0cm]
\clip(-0.5,-0.5) rectangle (3,2.5);
\begin{scriptsize}
\draw[line width=1pt,fill=black] (0.,0.) circle (2pt);
\draw[line width=1pt,fill=black] (2.,0.) circle (2pt);
\draw[line width=1pt,fill=black] (0.,2.) circle (2pt);
\draw[line width=1pt,fill=black] (2.,2.) circle (2pt);
\draw[line width=0.5pt] (0.,0.)-- (2.,0.);
\draw[line width=0.5pt] (0.,0.)-- (0.,2.);
\draw[line width=0.5pt] (2.,0.)-- (2.,2.);
\draw[line width=0.5pt] (0.,2.)-- (2.,2.);
\draw (-0.45,-0.05) node[anchor=north west] {\large $1$};
\draw (2.05,-0.03) node[anchor=north west] {\large $2$};
\draw (2.05,2.55) node[anchor=north west] {\large $3$};
\draw (-0.45,2.53) node[anchor=north west] {\large $4$};
\draw (0.85,-0.05) node[anchor=north west] {\large $a$};
\draw (2.05,1.15) node[anchor=north west] {\large $b$};
\draw (0.85,2.55) node[anchor=north west] {\large $c$};
\draw (-0.45,1.15) node[anchor=north west] {\large $d$};
\end{scriptsize}
\end{tikzpicture}
\caption{ The weighted graph  of Example~\ref{Ex1}.}
\label{fig:graphfourpoints}
\end{figure}
For this random walk space we are going to consider an evolution problem that involves the $1$-Laplacian in the edges $(1,4)$ and $(2, 3)$, and the Laplacian in the edges $(1,2)$ and $(3,4)$. So we make the following partition on the random walk (attending to the nomenclature used previously),
$$A_1 = \{ 4 \}, \quad A_2 = \{ 3 \}, \quad A_3 = \{ 2 \}, \quad A_4 = \{ 1 \}$$
and
$$B_1 = \{ 2 \}, \quad B_2 = \{ 1 \}, \quad B_3 = \{ 4 \}, \quad B_4 = \{ 3 \}.$$
 Hence,
if we denote
$$x(t):= u(t,1), \quad y(t):= u(t,2), \quad z(t):= u(t,3), \quad w(t):= u(t,4),$$
the equation
$$u_t- \Delta_{1,A}^m(u) - \Delta_{2,B}^m(u) \ni 0$$
corresponds to the following system
\begin{equation}\label{SystODE1} \left\{ \begin{array}{llll} x'(t) = \frac{d}{a+d} \g_t(1,4) + \frac{a}{a+d} (y(t) - x(t)); \\[10pt] y'(t) = \frac{b}{a+b} \g_t(2,3) + \frac{b}{a+b} (x(t) - y(t)); \\[10pt] z'(t) =  -\frac{b}{b+c} \g_t(2,3) + \frac{c}{b+c} (w(t) - z(t))); \\[10pt] w'(t) = -\frac{d}{c+d} \g_t(1,4) + \frac{c}{c+d} (z(t) - w(t)) \end{array} \right.
\end{equation}
for antisymmetric functions $\g_t$ satisfying
$$\g_t(1,4)  \in {\rm sign}(w(t)-x(t)), \ \g_t(2,3)  \in {\rm sign}(z(t)-y(t)). $$
In general, observe that this system of ODEs breaks into two systems of two linear ODEs (the first two and the last two equations respectively), for which the associated 2x2 matrix has one eigenvalue negative and one equal to zero. Therefore, the solution will be a linear combination of exponentials and linear functions until a finite time at which we have $w(t) = x(t)$ or $z(t) = y(t)$. To illustrate this more explicitly, we will now solve this system of equations for a particular choice of weights and initial datum $u_0$.

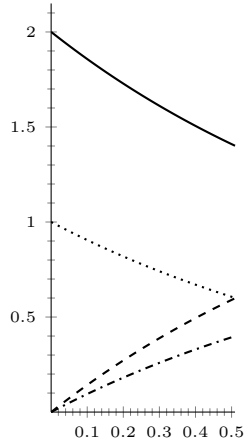
\begin{figure}[ht]
\centering
\begin{tikzpicture}
    \begin{axis}[
        xlabel={}, ylabel={},
        domain=0:0.52, 
        samples=1000, 
        axis lines=middle, 
        axis line style={-},
        legend pos=north west, 
        width=4cm, 
        height=7cm, 
        ymin=0, ymax=2.15, 
        xtick={0,0.1, 0.2,0.3,0.4,0.5,0.6}, 
        ytick={0,0.5, 1,1.5,2,2.5}, 
        xticklabel style={font=\tiny},
        yticklabel style={font=\tiny},
        minor tick num=4,
    ]
    \addplot[black,thick,restrict x to domain=0:0.51] {(3/2)*exp(-x) +1/2};
    \addplot[black, thick,dashed,restrict x to domain=0:0.51] {-(3/2)*exp(-x)+3/2};
    \addplot[black,thick,dotted,restrict x to domain=0:0.51] {exp(-x)};
    \addplot[black, thick,dashdotted ,restrict x to domain=0:0.51] {-exp(-x)+1};
    \end{axis}
\end{tikzpicture}
\caption{Example~\ref{Ex1}. $x(t)$ continuous line; $y(t)$ dashed line; $z(t)$ dotted line; $w(t)$ dashed-dotted line; $0\le t\lesssim 0.510826$.}
\label{ex4pointsplot1}
\end{figure}

Therefore, we take $a=b=c=d=1$ as weights and, for the initial datum, we take
$$u_0(1) = 2, \quad u_0(2) = 0, \quad u_0(3) = 1, \quad u_0(4) = 0,$$
which we rewrite as
\begin{equation}\label{tdheincon01} x(0) = 2, \quad y(0) = 0, \quad z(0) = 1, \quad w(0) = 0.
\end{equation}
Then, the system~\eqref{SystODE1} joint to the initial condition~\eqref{tdheincon01} is reduced, until the time
$$t_1 := \log(5/3)\approx 0.510826$$
at which $y$ and $z$ are equal, to the following system of ODEs
\begin{equation}\label{SystODE2} \left\{ \begin{array}{llll} x'(t) =  -\frac12 + \frac12 (y(t) - x(t)); \\[10pt] y'(t) =   \frac12  + \frac12 (x(t) - y(t)); \\[10pt] z'(t) =  -\frac12  + \frac12 (w(t) - z(t))); \\[10pt] w'(t) = \frac12 + \frac12 (z(t) - w(t)), \end{array} \right.
\end{equation}
joint to the initial condition~\eqref{tdheincon01}. In fact, it has a unique solution up to such time given by (see Figure~\ref{ex4pointsplot1})
$$x(t)=\frac32 e^{-t} +\frac12, \quad y(t)=-\frac32 e^{-t}+\frac32, \quad z(t)=e^{-t}, \quad w(t)=-e^{-t}+1.$$
Let us see how the evolution continues for $t > t_1$.  First, observe that for $t \geq t_1$ we have that $y(t) = z(t)$. We prove this by contradiction. Assume otherwise, and for simplicity, assume that $y$ and $z$ are not equal already in a neighbourhood of $\log(\frac53)$ from the right; the argument is similar for larger $t$. Observe that by the equation we have that $x(t)$ is decreasing and $w(t)$ is increasing. Thus, we have that the absolute value of the difference of any two given functions among $x,y,z,w$ is bounded from above by~$1$ (strictly for $t \geq t_1$). Moreover, by the equation the functions $x,y,z,w$ are Lipschitz, and consequently if $y(t) < z(t)$ this means that this property holds on an open interval (similarly if $y(t) > z(t)$). However, whenever $y(t) < z(t)$, it holds that
$$y'(t) = \frac{1}{2} + \frac{1}{2} (x(t) - y(t)) > 0$$
and
$$z'(t) = -\frac{1}{2} (w(t) - z(t)) < 0,$$
so $y$ is increasing and $z$ is decreasing; a similar argument works for $y(t) > z(t)$. Hence, it is not possible that we have $y(t) \neq z(t)$ on any open interval, and consequently $y(t) = z(t)$ for $t \geq t_1$.

With this property in mind, let us find the system of ODEs corresponding to
$$u_t- \Delta_{1,A}^m(u) - \Delta_{2,B}^m(u) \ni 0$$
for $t > t_1$. Recall that formally it is the system \eqref{SystODE1}; for $t < t_1$ it simplified to the system \eqref{SystODE2}, and our current goal is to find and solve a similar system of ODEs with explicitly given coefficients. Since $y$ and $z$ are Lipschitz with $y(t) = z(t)$, it follows that $y'(t) = z'(t)$ a.e., and using the equation we get that
\begin{equation}
\frac12 \g_t(2,3) + \frac12 (x(t) - y(t)) = -\frac12 \g_t(2,3) + \frac12 (w(t) - z(t)),
\end{equation}
which implies that
\begin{equation}
  \g_t(2,3) = \frac12(w(t) - x(t))
\end{equation}
(observe that it belongs to $[-1,1]=\hbox{sign(0)}={\rm sign}(z(t)-y(t))$).
Plugging this to the system \eqref{SystODE1}, we see that
$$ y'(t) = z'(t)  =  \frac14(w(t) - x(t))  + \frac12 (x(t) - y(t)) = \frac14(x(t) + w(t) - 2y(t)) $$
and keeping in mind that $\g_t(1,4)  \in {\rm sign}(w(t)-x(t))$, we arrive to the following system of equations with three variables $x,y,w$
\begin{equation}\left\{ \begin{array}{llll} x'(t) =  -\frac12 + \frac12 (y(t) - x(t)); \\[10pt] y'(t) = \frac14(x(t) + w(t) - 2y(t)); \\[10pt] w'(t) = \frac12 + \frac12 (y(t) - w(t)) \end{array} \right.
\end{equation}
with initial data
$$x(\log(5/3)) = \frac75; \quad y(\log(5/3)) = \frac35; \quad w(\log(5/3)) = \frac25.$$
Notice that by appropriately summing up the equations we get that $(x+w+2y)' = 0$, therefore $x + w + 2y \equiv 3$. From this, we may compute $y(t)$, and the system splits into separate equations for $x$ and $w$; the unique solution (after the time $t_1$) is given by
$$x(t)= \frac14 e^{-t} +\frac{\sqrt{15}}{2} e^{-\frac12t} - \frac14, \quad y(t)= -\frac14 e^{-t} + \frac34, \quad w(t)= \frac14 e^{-t} - \frac{\sqrt{15}}{2} e^{-\frac12t} + \frac{7}{4}.$$
These solutions are valid until the time
$$t_2 := \log(15/4) \approx 1.32176,$$
at which point we have $x(t) = w(t)$ (see Figure~\ref{ex4pointsplot1G01});
\begin{figure}[ht]
\centering
\begin{tikzpicture}
    \begin{axis}[
        xlabel={}, ylabel={},
        domain=0.510826:1.32176, 
        samples=1000, 
        axis lines=middle, 
        axis line style={-},
        legend pos=north west, 
        width=4cm, 
        height=7cm, 
        ymin=0.2, ymax=1.5, 
        xtick={0.4,0.6,0.8,1.0,1.2,1.4 }, 
        ytick={0.2,0.4,0.6,0.8,1.0,1.2,1.4,1.6}, 
        xticklabel style={font=\tiny},
        yticklabel style={font=\tiny},
        minor tick num=4,
    ]
    \addplot[black,thick,restrict x to domain=0.510826:1.32176] {(1/4)*exp(-x) +((15^(1/2))/2)*exp(-(1/2)*x) - 1/4};
    \addplot[black,thick,dotted,restrict x to domain=0.510826:1.32176] {-(1/4)*exp(-x) +  3/4};
    \addplot[black, thick,dashdotted ,restrict x to domain=0.510826:1.32176] {(1/4)*exp(-x)  - ((15^(1/2))/2)*exp(-(1/2)*x)  + 7/4};
    \end{axis}
\end{tikzpicture}
\caption{Example~\ref{Ex1}. $x(t)$ continuous line; $y(t)=z(t)$ dotted line; $w(t)$ dashed-dotted line; $0.510826\lesssim t\lesssim 1.32176$.}
\label{ex4pointsplot1G01}
\end{figure}
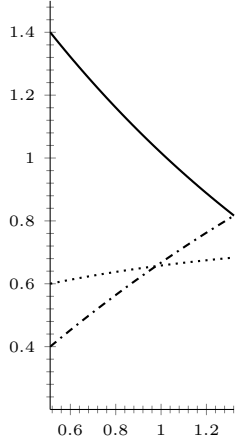
 throughout the interval $(t_1,t_2)$, we always have $w(t) < x(t)$. Arguing as above, we see that $x(t) = w(t)$ for $t > t_2$; therefore, since $x$ and $w$ are Lipschitz, it follows that $x'(t) = w'(t)$ a.e., and using the equation we get that
\begin{equation}
\frac12 \g_t(1,4) + \frac12 (x(t) - y(t)) = -\frac12 \g_t(1,4) + \frac12 (w(t) - z(t)),
\end{equation}
which, taking into account that $x(t) = w(t)$ and $y(t) = z(t)$, implies that
\begin{equation}
\g_t(1,4) = 0.
\end{equation}
Notice that for $t > t_2$ we also have
\begin{equation}
\g_t(2,3) = \frac12(w(t) - x(t)) = 0.
\end{equation}
Thus, for $t > t_2$, inserting this information to the system \eqref{SystODE1}, we arrive to the following system of equations with two variables $x,y$
\begin{equation}\left\{ \begin{array}{llll} x'(t) = \frac12 (y(t) - x(t)); \\[10pt] y'(t) = \frac12(x(t) - y(t)) \end{array} \right.
\end{equation}
with initial data
$$x(\log(15/4)) = \frac{49}{60}, \quad y(\log(15/4)) = \frac{41}{60}.$$
The unique solution (after the time $t_2$) is given by (see Figure~\ref{ex4pointsplot1G02}):
$$x(t)= \frac14 e^{-t} + \frac34, \quad y(t)= -\frac14 e^{-t} + \frac34.$$
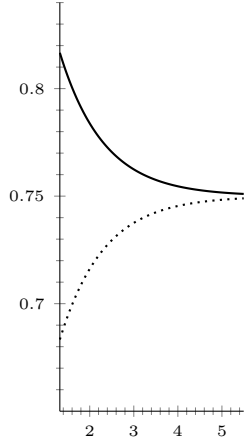
\begin{figure}[ht]
\centering
\begin{tikzpicture}
    \begin{axis}[
        xlabel={}, ylabel={},
        domain=1.32176:5.5, 
        samples=1000, 
        axis lines=middle, 
        axis line style={-},
        legend pos=north west, 
        width=4cm, 
        height=7cm, 
        ymin=0.65, ymax=0.84, 
        xtick={1,2,3,4,5,6}, 
        ytick={ }, 
        xticklabel style={font=\tiny},
        yticklabel style={font=\tiny},
        minor tick num=4,
        xticklabel={\pgfmathprintnumber[fixed, precision=6]{\tick}},
    ]
    \addplot[black,thick, restrict x to domain=1.32176:5.5] {(1/4)*exp(-x) + 3/4};

    \addplot[black, thick,dotted,restrict x to domain=1.32176:5.5] {-(1/4)*exp(-x) + 3/4};
    \end{axis}
\end{tikzpicture}
\caption{Example~\ref{Ex1}.   $x(t)=w(t)$  continuous line; $y(t)=z(t)$ dotted line; $t\gtrsim 1.32176$.}
\label{ex4pointsplot1G02}
\end{figure}
Note that the solution of this system converges to the mean of the initial data and has an infinite extinction time.   Formally speaking,  due to the fact that the equation selects only some directions in the random walk, the graph effectively splits into two pieces; the sets $\{ 1,4 \}$ and $\{ 2,3 \}$. Within these sets, the evolution is primarily governed by the $1$-Laplacian (with some diffusion effect between the two sets), until the point where the values within the two sets are identical, and then the two pieces move together at an exponential rate towards the mean of the initial data.
\hfill $\blacksquare$
}\end{example}

The last example concerns the behaviour of the solution to problem \eqref{ProblemAB} on a different four-point graph. Note that depending on the choice of the initial data,  we may end up with a finite or infinite   time   to get the mean of the initial data.

\begin{example}\label{Ex12}\rm Consider now the graph $G=(V,E)$  with vertices $V = \{1,2,3,4\}$
and edges $\quad E = \{(1,4), (1,2), (2,3) \}.$
To these edges, we assign the positive weights
$$w_{1,2} = a, \ w_{2,3} = b,  \ w_{4,1} = d. $$
Now we are considering the linear graph shown in Figure~\ref{fig:graphfourpointsbis}.   The invariant measure  $\nu$ is
$$ \nu(\{ 1 \}) = a + d, \quad \nu(\{ 2 \}) = a+ b, \quad \nu(\{ 3 \}) =b, \quad \nu(\{ 4 \}) = d,$$
and the random walk $m$ is given by
$$m_1 = \frac{a}{a+d}  \delta_2 + \frac{d}{a+d} \delta_4, \quad m_2 = \frac{a}{a+b}  \delta_1 + \frac{b}{a+b} \delta_3,\quad m_3 =   \delta_2  , \quad  m_4 =   \delta_1.$$
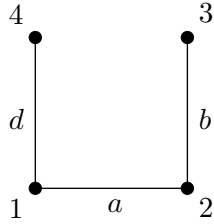
\begin{figure}[ht]
\begin{tikzpicture}[line cap=round,line join=round,,x=1.0cm,y=1.0cm]
\clip(-0.5,-0.5) rectangle (3,2.5);
\begin{scriptsize}
\draw[line width=1pt,fill=black] (0.,0.) circle (2pt);
\draw[line width=1pt,fill=black] (2.,0.) circle (2pt);
\draw[line width=1pt,fill=black] (0.,2.) circle (2pt);
\draw[line width=1pt,fill=black] (2.,2.) circle (2pt);
\draw[line width=0.5pt] (0.,0.)-- (2.,0.);
\draw[line width=0.5pt] (0.,0.)-- (0.,2.);
\draw[line width=0.5pt] (2.,0.)-- (2.,2.);
\draw (-0.45,-0.05) node[anchor=north west] {\large $1$};
\draw (2.05,-0.03) node[anchor=north west] {\large $2$};
\draw (2.05,2.55) node[anchor=north west] {\large $3$};
\draw (-0.45,2.53) node[anchor=north west] {\large $4$};
\draw (0.85,-0.05) node[anchor=north west] {\large $a$};
\draw (2.05,1.15) node[anchor=north west] {\large $b$};
\draw (-0.45,1.15) node[anchor=north west] {\large $d$};
\end{scriptsize}
\end{tikzpicture}
\caption{ The linear weighted graph  of Example~\ref{Ex12}.}
\label{fig:graphfourpointsbis}
\end{figure}

For this random walk space we are going to consider an evolution problem that involves the $1$-Laplacian in the edges $(1,4)$ and $(2, 3)$, and the Laplacian in the edge  $(1,2)$.  Hence,
if we denote
$$x(t)= u(t,1), \quad y(t)= u(t,2), \quad z(t)= u(t,3), \quad w(t)= u(t,4),$$
the equation
$$u_t- \Delta_{1,A}^m(u) - \Delta_{2,B}^m(u) \ni 0$$
corresponds to the following system of ODEs
\begin{equation}\label{SystODE1d} \left\{ \begin{array}{llll} x'(t) = \frac{d}{a+d} \g_t(1,4) + \frac{a}{a+d} (y(t) - x(t)); \\[10pt] y'(t) = \frac{b}{a+b} \g_t(2,3) + \frac{b}{a+b} (x(t) - y(t)); \\[10pt] z'(t) =  - \g_t(2,3); \\[10pt] w'(t) = -  \g_t(1,4) , \end{array} \right.
\end{equation}
for antisymmetric functions $\g_t$ satisfying
$$\g_t(1,4)  \in {\rm sign}(w(t)-x(t)) \quad \mbox{and} \quad \g_t(2,3)  \in {\rm sign}(z(t)-y(t)). $$
{\flushleft Case A.}   Consider the weights $a=b=c=1$ and  the initial datum
$$x(0) = 1, \quad y(0) = 1, \quad z(0) = 0, \quad w(0) = 0.$$
We claim that, up to a time $t_1$, we have $x(t) > w(t)$ and $y(t) > z(t)$. Then,
we arrive to the following system of ODEs
\begin{equation}\label{SystODE3} \left\{ \begin{array}{llll} x'(t) = -\frac12 + \frac12 (y(t) - x(t)); \\[10pt] y'(t) = - \frac12  + \frac12(x(t) - y(t)); \\[10pt] z'(t) =  1; \\[10pt] w'(t) = 1, \end{array} \right.
\end{equation}
 whose solution is (see Figure~\ref{ex4points05})
$$x(t)=  1- \frac{t}{2}, \quad  y(t)=  1- \frac{t}{2}, \quad z(t) = t, \quad w(t)=t.$$
  This solution is valid until the time  $t_1 = \frac23$. Afterwards,
$$x(t)=  y(t) = z(t) = w(t) =  \frac{2}{3}  \quad \hbox{for } \  t \geq  \frac23.$$
\begin{figure}[ht]
\centering
\begin{tikzpicture}
    \begin{axis}[
        xlabel={}, ylabel={},
        domain=0:1.6, 
        samples=1000, 
        axis lines=middle, 
        axis line style={-},
        legend pos=north west, 
        width=4cm, 
        height=7cm, 
        ymin=0, ymax=1.1, 
        xtick={}, 
        ytick={ }, 
        xticklabel style={font=\tiny},
        yticklabel style={font=\tiny},
        minor tick num=4,
        xticklabel={\pgfmathprintnumber[fixed, precision=6]{\tick}},
    ]
    \addplot[black,thick, restrict x to domain=0:0.66667] {1- (1/2)*x};

    \addplot[black, thick,dashdotted,restrict x to domain=0:0.66667] {x};
    \addplot[black,  thick] coordinates {(2/3, 2/3) (1.6, 2/3)};
    \end{axis}
\end{tikzpicture}
\caption{Example~\ref{Ex12}. $x(t)=y(t)$ continuous line; $z(t)=w(t)$ dashed-dotted line.
After $t=2/3$, $x(t)=y(t)=z(t)=w(t)$. }
\label{ex4points05}
\end{figure}
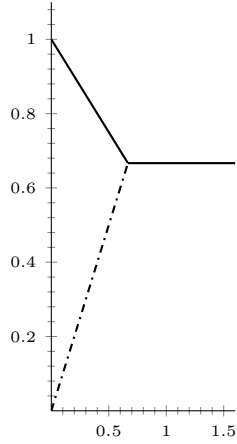

{\flushleft Case B.}   Consider the same weights $a=b=d=1$, but with  the initial datum
$$x(0) = 2, \quad y(0) = 0, \quad   z(0)  = 1, \quad   w(0) = 0.$$
We claim that, up to a time $t_1$, we have $x(t) > w(t)$ and $z(t) > y(t)$. Then, for $0\leq t \leq t_1$
we arrive to the following system of ODEs
\begin{equation}\label{SystODE2d} \left\{ \begin{array}{llll} x'(t) =  -\frac12 + \frac12 (y(t) - x(t)); \\[10pt] y'(t) =   \frac12  + \frac12 (x(t) - y(t)); \\[10pt] z'(t) = - 1; \\[10pt] w'(t) = 1, \end{array} \right.
\end{equation}
whose solution is  given by (see~Figure~\ref{ex4pointsplotbis}):
$$x(t)=\frac32 e^{-t} +\frac12, \quad y(t)=-\frac32 e^{-t}+\frac32, \quad z(t)=1-t, \quad w(t)=t.$$
Thus, $y(t_1) = z(t_1)$ if $t_1  = W_0\left(\frac32\sqrt{e}\right)-\frac12 \approx 0.453295$, where $W_0$ is the principal branch of the Lambert $W$ function.

\begin{figure}[ht]
\centering
\begin{tikzpicture}
    \begin{axis}[
        xlabel={}, ylabel={},
        domain=0:0.453295, 
        samples=1000, 
        axis lines=middle, 
        axis line style={-},
        legend pos=north west, 
        width=4cm, 
        height=7cm, 
        ymin=0, ymax=2.1, 
        xtick={ }, 
        ytick={ }, 
        xticklabel style={font=\tiny},
        yticklabel style={font=\tiny},
        minor tick num=4,
    ]
    \addplot[black,thick,restrict x to domain=0:0.453295] {(3/2)*exp(-x) + 1/2};
    \addplot[black, thick,dashed,restrict x to domain=0:0.453295] {-(3/2)*exp(-x)+3/2};
    \addplot[black,thick,dotted,restrict x to domain=0:0.453295] {1-x};
    \addplot[black, thick,dashdotted ,restrict x to domain=0:0.453295] {x};
    \end{axis}
\end{tikzpicture}
\caption{Example~\ref{Ex12}. $x(t)$ continuous line; $y(t)$ dashed line; $z(t)$ dotted line; $w(t)$ dashed-dotted line; $0\le t\lesssim 0.453295$.}
\label{ex4pointsplotbis}
\end{figure}
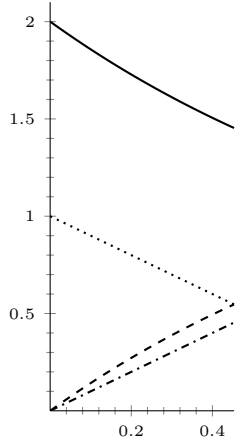

Let us see how the solution continues. For $t \geq t_1$, arguing as in the previous example, we see that $y(t) = z(t)$. Then, by comparing the expressions for $y'$ and $z'$, we see that
\begin{equation}
\g_t(2,3) = \frac13 (y(t) - x(t)),
\end{equation}
and consequently the system \eqref{SystODE1d} becomes
\begin{equation} \left\{ \begin{array}{llll} x'(t) =  -\frac12 + \frac12 (y(t) - x(t)); \\[10pt] y'(t) =  \frac13 (x(t) - y(t)); \\[10pt] w'(t) = 1 \end{array} \right.
\end{equation}
with initial data
\begin{equation}\begin{array}{l}
\displaystyle x(t_1) =W_0\left(\frac32\sqrt{e}\right)+\frac12 \approx 1.4533,\\
\\
\displaystyle  y(t_1) =\frac32-W_0\left(\frac32\sqrt{e}\right) \approx 0.546704,\\
\\
\displaystyle z(t_1)= W_0\left(\frac32\sqrt{e}\right)-\frac12 \approx 0.453295,\end{array}
\end{equation}
 that is,
$$ x(t_1) =t_1+1,\quad  y(t_1) =1-t_1  ,\quad z(t_1)=t_1.$$
Its unique solution is given by
 $$\begin{array}{l}
\displaystyle x(t)= \frac35\left(2W_0\left(\frac32\sqrt{e}\right)-\frac25\right)
e^{ -\frac56\left(t-t_1\right)}-\frac15t+\frac{16}{25},
\\ \\
\displaystyle
y(t) =-\frac25\left(2W_0\left(\frac32\sqrt{e}\right)-\frac25\right)e^{ -\frac56\left(t-t_1\right)}-\frac15t+\frac{31}{25},\\
\\ w(t)=t,
\end{array}$$
 until the time $$t_2 = \frac{18}{15}W_0\left(
 \frac{-e^{\frac56 W_0\left(\frac32\sqrt{e}\right)} + 5 e^{\frac56 W_0\left(\frac32\sqrt{e}\right)} W_0\left(\frac32\sqrt{e}\right)}{6 e^{\frac{31}{36}}}\right) + \frac{8}{15} \approx 1.00786,$$ at which $x(t_2) = w(t_2)$.
 Observe that, as we wanted, $\frac13(y(t)-x(t))\in[-1,1]$.

 For $t \geq t_2$, again arguing as in the previous example, we have that $$x(t) = w(t)\ \hbox{ and }\   y(t) = z(t).$$ The formula for $\g_t(2,3)$ remains unchanged, and by comparing the expressions for $x'$ and $w'$ we see that
\begin{equation}
\g_t(1,4) = \frac{1}{3} (x(t) - y(t)),
\end{equation}
and consequently the system \eqref{SystODE1d} becomes
\begin{equation}
\left\{ \begin{array}{llll} x'(t) = \frac13 (y(t) - x(t)); \\[10pt] y'(t) =  \frac13 (x(t) - y(t)) \end{array} \right.
\end{equation}
with initial data
\begin{equation}
x(t_1)=t_2 \approx 1.00786, \quad y(t_1) \approx 0.658806.
\end{equation}
The unique solution is given by
\begin{equation}
x(t) \approx 0.833333 + 0.341719 e^{- \frac23 t}, \quad y(t) \approx 0.833333 - 0.341719 e^{- \frac23 t}
\end{equation}
which has infinite extinction time.  It is easy to see that $\frac{1}{3} (x(t) - y(t))\in[-1,1]$. Observe that the mean value of the solution is equal to $\frac56\approx 0.833333$, the same as for the initial data. \hfill $\blacksquare$
\end{example}

\subsection{Nonlocal problems  in $\mathbb{R}^N$}

In the case of nonlocal problems  in the Euclidean space, that is, for the random walk space given in Example \ref{example.nonlocalJ},  we obtain the following consequence of our general results. Suppose we have two radially symmetric kernels $J,G: \R^N \rightarrow [0, +\infty)$ with
$$\int_{\R^N} J(\xi) \, d \xi = \int_{\R^N} G(\xi) \, d \xi =1.$$
In this case, the corresponding random walk spaces $[\R^N, d, m^J, \mathcal{L}^N]$ and $[\R^N, d, m^G, \mathcal{L}^N]$ have the same invariant and reversible measure. Hence, the Radon-Nikodym derivative $\mu$ is the identity, and it satisfies the assumption \eqref{ASSUMP1}. Then,  as a consequence of Theorem \ref{0942cor} and Theorem \ref{0942corI} respectively, we have the following results.

\begin{theorem}\label{0942corNonLocal}
Let $T>0$ and assume that $p \geq 2$. For any $u_0\in L^2(\R^N,\mathcal{L}^N)$ and $f\in L^2(0,T;L^2(\R^N, \mathcal{L}^N))$ the following problem has a unique strong solution $u=u_{u_0,f}$:
$$\left\{\begin{array}{lll}
\displaystyle u_t(t,x) -\int_{\R^N} \frac{u(t,y) -u(t,x)}{\vert u(t,y) -u(t,x) \vert} \, J(x-y) \, dy \\[10pt]  \displaystyle \qquad\quad\, -\int_{\R^n} \vert u(t,y) - u(t,x) \vert^{p-2} (u(t,y) - u(t,x)) \, G(y-x) \, dy \ni f(t,x)&\hbox{on } [0,T]
\\[10pt]
u(0,x)=u_0(x).
\end{array}
\right.
$$
\end{theorem}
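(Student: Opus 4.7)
The plan is to identify this result as a direct specialization of Theorem \ref{0942cor} to the pair of random walk spaces introduced in Example \ref{example.nonlocalJ}, namely $[\mathbb{R}^N, \mathcal{B}, m^J, \mathcal{L}^N]$ and $[\mathbb{R}^N, \mathcal{B}, m^G, \mathcal{L}^N]$. The whole work amounts to (i) checking that the abstract hypotheses are met in this concrete setting, and (ii) unwinding the abstract inclusion $u_t - \Delta_1^{m^J} u - \mu \Delta_p^{m^G} u \ni f$ into the pointwise integral form stated above.

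For (i), I would first observe that both random walks $m^J$ and $m^G$ admit $\mathcal{L}^N$ as a reversible, hence invariant, measure; so $\nu_1 = \nu_2 = \mathcal{L}^N$, and consequently $\nu_2 \ll \nu_1$ with Radon--Nikodym derivative $\mu \equiv 1$. In particular $\mu \in L^\infty(\mathbb{R}^N, \mathcal{L}^N)$ and $\mu \geq 1 > 0$ almost everywhere, so assumption \eqref{ASSUMP1} is satisfied with $c = 1$. Since $\mathcal{L}^N(\mathbb{R}^N) = +\infty$, case (a) of Theorem \ref{0942} is unavailable, but the standing assumption $p \geq 2$ places us exactly in case (b). Theorem \ref{0942cor} then yields, for every $u_0 \in L^2(\mathbb{R}^N, \mathcal{L}^N)$ and $f \in L^2(0,T;L^2(\mathbb{R}^N, \mathcal{L}^N))$, a unique strong solution to
\begin{equation}
u_t - \Delta_1^{m^J} u - \Delta_p^{m^G} u \ni f, \qquad u(0) = u_0.
\end{equation}

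For (ii), it remains to show that this abstract inclusion is precisely the integro-differential problem in the statement. Since $dm^G_x(y) = G(x-y)\,dy$ and $G$ is radially symmetric, Definition \ref{dfn:plaplacian} gives directly
\begin{equation}
\Delta_p^{m^G} u(t,x) = \int_{\mathbb{R}^N} |u(t,y) - u(t,x)|^{p-2}(u(t,y) - u(t,x)) \, G(y-x) \, dy,
\end{equation}
which is the second integral in the PDE. For the $1$-Laplacian term, Proposition \ref{CHarat1} translates $w(x) \in -\Delta_1^{m^J} u(t,\cdot)(x)$ into the existence of an antisymmetric selection $\mathbf{g}(t,\cdot,\cdot) \in L^\infty(\mathbb{R}^N \times \mathbb{R}^N)$ with $\|\mathbf{g}\|_\infty \leq 1$, satisfying $\mathbf{g}(t,x,y) \in \mathrm{sign}(u(t,y) - u(t,x))$ and $w(x) = -\int_{\mathbb{R}^N} \mathbf{g}(t,x,y) J(x-y)\,dy$. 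On the set $\{(x,y) : u(t,y) \neq u(t,x)\}$ this forces $\mathbf{g}(t,x,y) = \tfrac{u(t,y)-u(t,x)}{|u(t,y)-u(t,x)|}$, recovering the formal quotient displayed in the statement.

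There is essentially no genuine obstacle: the mathematical content is all packed into Theorem \ref{0942cor}, and the only delicate issue is a notational one, namely interpreting the $\frac{0}{0}$ arising in the integrand of the $1$-Laplacian term on the contact set $\{u(t,y) = u(t,x)\}$. This is to be read, as usual in $1$-Laplacian problems, via the antisymmetric selection $\mathbf{g}$ produced by Proposition \ref{CHarat1}, and with that convention the strong solution furnished by Theorem \ref{0942cor} is exactly the strong solution claimed here.
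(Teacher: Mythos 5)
Your proposal is correct and follows exactly the paper's own route: the paper obtains this theorem as a direct consequence of Theorem \ref{0942cor} applied to the random walk spaces of Example \ref{example.nonlocalJ}, noting that $\nu_1=\nu_2=\mathcal{L}^N$ gives $\mu\equiv 1$ (so assumption \eqref{ASSUMP1} holds) and that $p\geq 2$ places one in case (b) of Theorem \ref{0942}. Your additional care in reading the $1$-Laplacian term through the antisymmetric selection of Proposition \ref{CHarat1} is consistent with the paper's conventions and only makes explicit what the paper leaves implicit.
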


\begin{theorem}\label{0942corNewNonLocal}  Let $T>0$. For any $u_0\in L^2(\R^N,  \mathcal{L}^N)$ and $f\in L^2(0,T;L^2(\R^N,  \mathcal{L}^N))$ the following problem has a unique strong solution $u=u_{u_0,f}$:
$$\left\{\begin{array}{lll}
\displaystyle u_t(t,x) -\int_{\R^N} \frac{u(t,y) -u(t,x)}{\vert u(t,y) -u(t,x) \vert} \, J(x-y) \, dy \\[10pt]  \displaystyle \qquad\quad\, -\int_{\R^n}\frac{u(t,y) -u(t,x)}{\vert u(t,y) -u(t,x) \vert} \, G(y-x) \, dy \ni f(t,x)&\hbox{on } [0,T]
\\[10pt]
u(0,x)=u_0(x).
\end{array}
\right.
$$
\end{theorem}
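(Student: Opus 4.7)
The plan is to obtain Theorem~\ref{0942corNewNonLocal} as a direct specialization of Theorem~\ref{0942corI}. I would take $X = \R^N$ equipped with its Borel $\sigma$-algebra, and use the two random walk structures from Example~\ref{example.nonlocalJ} with kernels $J$ and $G$, i.e.\ $m^1 := m^J$ and $m^2 := m^G$. As observed there, $\mathcal{L}^N$ is reversible (hence invariant) with respect to both random walks, so I set $\nu_1 = \nu_2 = \mathcal{L}^N$. The Radon--Nikodym derivative $\mu = d\nu_2/d\nu_1$ is then the constant $1$ $\mathcal{L}^N$-a.e., so trivially $\mu \in L^\infty(\R^N,\mathcal{L}^N)$ and the assumption of Theorem~\ref{0942corI} holds; moreover the joint Hilbert space $H$ is simply $L^2(\R^N, \mathcal{L}^N)$.

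Next I would verify that the multivalued operators $\Delta_1^{m^J}$ and $\Delta_1^{m^G}$ (from \eqref{1Subd}) produce the integral expressions appearing in the statement. By Proposition~\ref{CHarat1}, $(u,v) \in -\Delta_1^{m^J}$ if and only if there is an antisymmetric selection $\mathbf{g}_J \in L^\infty(\R^N \times \R^N, \mathcal{L}^N \otimes m^J_x)$ with $\|\mathbf{g}_J\|_\infty \le 1$, $\mathbf{g}_J(x,y) \in {\rm sign}(u(y)-u(x))$ for $(\mathcal{L}^N \otimes m^J_x)$-a.e.\ $(x,y)$, and
\begin{equation*}
v(x) = \int_{\R^N} \mathbf{g}_J(x,y) \, J(x-y) \, dy,
\end{equation*}
and analogously for $m^G$. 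The symbolic quotient $(u(y)-u(x))/|u(y)-u(x)|$ written in the differential inclusion is exactly compact notation for such a measurable selection from ${\rm sign}(u(y)-u(x))$, so the first integral term in the statement coincides with an element of $-\Delta_1^{m^J} u$; the second integral term is obtained identically with $G$ in place of $J$.

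With these identifications in hand, the Cauchy problem in the statement becomes, word for word, the abstract problem
\begin{equation*}
u_t - \Delta_1^{m^J} u - \mu\, \Delta_1^{m^G} u \ni f, \qquad u(0) = u_0,
\end{equation*}
posed in $H = L^2(\R^N, \mathcal{L}^N)$, with $f \in L^2(0,T;H)$ and $u_0 \in H$. All hypotheses of Theorem~\ref{0942corI} are met, so existence and uniqueness of a strong solution follow immediately, along with the contraction principle analogous to \eqref{1416I}. I do not anticipate any real obstacle: the entire argument is a translation of the abstract $(1,1)$-Laplacian result to the nonlocal Euclidean setting, the only non-trivial ingredient being the explicit pointwise representation of the subdifferential of $\mathcal{F}_{1,m}$ supplied by Proposition~\ref{CHarat1}, which justifies rewriting the differential inclusion as the integral equation displayed in the theorem.
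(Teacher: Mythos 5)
Your proposal is correct and follows exactly the paper's own (very brief) argument: the paper obtains this theorem by specializing Theorem \ref{0942corI} to $X=\R^N$, $m^1=m^J$, $m^2=m^G$, $\nu_1=\nu_2=\mathcal{L}^N$, so that $\mu\equiv 1$. Your additional remark identifying the symbolic quotients with the antisymmetric selections from Proposition \ref{CHarat1} is a useful clarification that the paper leaves implicit.
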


 In a similar manner, we recover results corresponding to the $(q,p)$-Laplace equation with $q \leq \frac{p}{p-1} \leq 2 \leq p$ and to the case of a partition of the random walk, as a consequence of Theorems \ref{thm:qplaplacegradientflow} and \ref{0942corNew} respectively.

{\flushleft \bf Acknowledgements.} The first author acknowledges support of the Austrian Science Fund (FWF) grant 10.55776/ESP88. The second and third authors  have been partially supported  by  Grant PID2022-136589NB-I00 funded by MCIN/AEI/10.13039/501100011 033 and FEDER and by Grant RED2022-134784-T funded by MCIN/AEI/10.13039/501 100011033. For the purpose of open access, the authors have applied a CC BY public copyright licence to any Author Accepted Manuscript version arising from this submission.

\

{\bf Data Availability.} Data sharing is not applicable to this article as no datasets were generated or
analyzed during the current study.

\

{\bf Conflict of interest.} The authors have no Conflict of interest to declare that are relevant to the content of this
article.

\end{document}